\pgfplotsset{compat=1.13}
\definecolor{ffqqqq}{rgb}{1,0,0}
\definecolor{qqqqff}{rgb}{0,0,1}
\definecolor{cqcqcq}{rgb}{0.75,0.75,0.75}
\definecolor{zzttqq}{rgb}{0.6,0.2,0.}
\theoremstyle{plain}
\newtheorem{thm}{Theorem}
\newtheorem{prop}{Proposition}[section]
\newtheorem{lem}[prop]{Lemma}
\newtheorem{rmk}[prop]{Remark}
\newcommand {\R} {\mathbb{R}} \newcommand {\Z} {\mathbb{Z}}
\newcommand {\T} {\mathbb{T}} \newcommand {\N} {\mathbb{N}}
\newcommand {\C} {\mathbb{C}} 
\newcommand {\p} {\partial}
\newcommand {\D} {\Delta}
\newcommand {\supp} {\text{supp}}
\newcommand{\dis}{\mathrm{d}}
\DeclareMathOperator{\F} {\mathcal{F}}
\date{\today}
\title[(Global) Unique Continuation Properties of the Fractional Discrete Laplacian]{On (Global) Unique Continuation Properties of the Fractional Discrete Laplacian}
\author[A. Fern\'andez-Bertolin]{Aingeru Fern\'andez-Bertolin}
\address{Facultad de Ciencia y Tecnología, Universidad del País Vasco /Euskal Herriko Unibertsitatea (UPV/EHU), Departamento de Matemáticas, UPV/EHU, Apartado 644, 48080 Bilbao, Spain}
\email{aingeru.fernandez@ehu.eus}
\author[L.\ Roncal]{Luz Roncal}
\address[Luz Roncal]{
BCAM - Basque Center for Applied Mathematics,
48009 Bilbao, Spain
and
Ikerbasque, Basque Foundation for Science,
48011 Bilbao, Spain
and
Universidad del Pa\'is Vasco / Euskal Herriko Unibertsitatea,
48080 Bilbao, Spain}
\email{lroncal@bcamath.org}
\author[A. R\"uland]{Angkana R\"uland}
\address{Ruprecht-Karls-Universität Heidelberg, Institut für Angewandte Mathematik, Im Neuenheimer Feld 205, 69120 Heidelberg, Germany}
\email{Angkana.Rueland@uni-heidelberg.de}
\keywords{Fractional discrete Laplacian, unique continuation properties, Carleman estimates, stability properties}
\subjclass[2010]{Primary 39A12. Secondary: 26A33, 35R11, 49M25, 65N15}
\begin{document}

\begin{abstract}
We study various qualitative and quantitative (global) unique continuation properties for the fractional discrete Laplacian. We show that while the fractional Laplacian enjoys striking rigidity properties in the form of (global) unique continuation properties, the fractional discrete Laplacian does not enjoy these in general. While discretization thus counteracts the strong rigidity properties of the continuum fractional Laplacian, by discussing quantitative forms of unique continuation, we illustrate that these properties can be recovered if exponentially small (in the lattice size) correction terms are added. This in particular allows us to deduce uniform stability properties for a discrete, linear inverse problem for the fractional Laplacian. We complement these observations with a transference principle and the discussion of these properties on the discrete torus.
\end{abstract}

\maketitle

\section{Introduction}

The fractional Laplacian is a nonlocal elliptic operator which enjoys striking unique continuation properties (UCP). These are substantially stronger than for local elliptic operators: The fractional Laplacian is an ``antilocal operator" \cite{L82,V93,SVW02,GFR20} and thus satisfies \emph{global} unique continuation properties \cite{GSU16}. Fractional Schrödinger equations satisfy the weak and strong unique continuation properties as well as the UCP from measurable sets \cite{FF14,Rue15,S15,Yu17,GRSU18,GFR19}. Moreover, the associated Caffarelli--Silvestre extension \cite{CS07} enjoys quantitative boundary-bulk doubling properties \cite{Rue21,RS20}. These properties have played a major role in the study of nonlocal inverse problems such as the fractional Calder\'on problem \cite{GSU16} (see also \cite{S17, Rue18} for surveys on these).
It is well-known that discrete counterparts of local elliptic operators often exhibit different, and in terms of local properties often weaker, unique continuation properties \cite{BHR10, BHLR10a, FB19, FBM21,FBV17,GM13,GM14,LM15,LM17, FBRRS20}.
It is hence the main objective of this article to investigate this phenomenon for the fractional discrete Laplacian. In particular, we study discrete counterparts for various types of qualitative and quantitative unique continuation properties for the fractional Laplacian on the lattice $(h\Z)^d$ and the discrete torus $(h\T)^d$.

\subsection{Global unique continuation}

For the fractional Laplacian a remarkable \emph{global} unique continuation property holds which is not available for local elliptic operators such as the Laplacian:

\begin{thm}[Global UCP, \cite{GSU16}]
\label{prop:unique}
Let $s\in (0,1)$ and let $\Omega \subset \R^d$ with $d\geq 1$ be open.
Let $u\in H^{r}(\R^d)$ for some $r\in \R$ and assume that 
\begin{align*}
u=0=(-\D)^s u \mbox{ in } \Omega.
\end{align*}
Then $u\equiv 0$ in $\R^d$.
\end{thm}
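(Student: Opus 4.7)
The plan is to lift the problem to one space dimension more via the Caffarelli--Silvestre extension and then apply a boundary unique continuation result for the resulting degenerate elliptic equation. Concretely, given $u \in H^r(\R^d)$, define $\tilde u : \R^d \times (0,\infty) \to \R$ as the unique solution of the degenerate elliptic problem
\[
\nabla \cdot (y^{1-2s} \nabla \tilde u) = 0 \quad \text{in } \R^d \times (0,\infty), \qquad \tilde u(\cdot, 0) = u,
\]
which is well-defined in a suitable weighted energy class. The key property of this extension is that the weighted normal derivative recovers the fractional Laplacian:
\[
-\lim_{y \to 0^+} y^{1-2s} \partial_y \tilde u(x,y) = c_s (-\Delta)^s u(x),
\]
in an appropriate distributional sense. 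Under the hypothesis of the theorem, both $\tilde u(x,0)=0$ and $\lim_{y\to 0^+} y^{1-2s} \partial_y \tilde u(x,y) = 0$ hold on $\Omega$; that is, $\tilde u$ has simultaneously vanishing Dirichlet and (weighted) Neumann data on the flat piece $\Omega \times \{0\}$ of the boundary of the upper half-space.

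Next, I would invoke a boundary Carleman estimate tailored to the Muckenhoupt $A_2$ weight $y^{1-2s}$, of the type established by Rüland, to deduce that $\tilde u$ must vanish in a one-sided neighborhood $\Omega \times [0,\delta)$. The argument is standard once the Carleman inequality is at hand: odd or even reflection across $\{y=0\}$ (dictated by which of the two homogeneous conditions one imposes) extends $\tilde u$ across the boundary through $\Omega$ as a solution of a degenerate elliptic equation, and the vanishing of both the Dirichlet and Neumann data ensures that this reflected extension remains in the correct weighted energy space. The Carleman estimate then gives vanishing of infinite order at boundary points of $\Omega \times \{0\}$ and hence local vanishing in a full $(d+1)$-dimensional ball.

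Once $\tilde u$ is known to vanish on a nonempty open subset of $\R^d \times (0,\infty)$, I would use the interior strong unique continuation property for the Caffarelli--Silvestre operator, again available through the Carleman machinery, together with the connectedness of the upper half-space, to propagate the vanishing and conclude $\tilde u \equiv 0$ on $\R^d \times (0,\infty)$. Taking the trace at $y=0$ then yields $u \equiv 0$ on $\R^d$.

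The main obstacle is the boundary Carleman estimate at the degenerate boundary $\{y=0\}$: the weight $y^{1-2s}$ is singular or degenerate depending on the sign of $1-2s$, standard elliptic Carleman estimates do not directly apply, and one must carefully choose a radial weight $\varphi$ with a suitable pseudoconvexity property compatible with the degeneracy, while simultaneously controlling boundary traces of both Dirichlet and Neumann type. Once this analytic workhorse is in place, the remainder of the argument is essentially a propagation-of-smallness scheme applied to the extension.
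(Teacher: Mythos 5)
Your proposal is correct and coincides with the route the paper itself points to for this cited result: lifting to the Caffarelli--Silvestre extension and applying boundary unique continuation/Carleman estimates for the degenerate $A_2$-weighted equation (the approach of \cite{Rue15}, cf.\ also \cite{FF14,Yu17}). The only detail worth adding is the reduction to smooth $u$ by mollification, since $u\in H^r(\R^d)$ with $r$ possibly very negative does not a priori lie in the energy class for the extension; convolving with a mollifier preserves the vanishing of $u$ and $(-\D)^s u$ on a slightly smaller open set and the rest of your argument then applies verbatim.
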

A robust proof of this can, for instance, be obtained by using the unique continuation and Carleman estimates from \cite{Rue15} (cf. also \cite{FF14} and \cite{Yu17}), but also potential theoretic approaches \cite{R38} are possible.
In the physics community this property is known as the \emph{antilocality} of the fractional Laplacian \cite{V93, L82,GFR20,CGFR21}.

As a first observation, we illustrate that for \emph{discrete} versions of the fractional Laplacian, the global unique continuation property fails on the discrete lattice $(h\Z)^d$ with $h\in \R_+$. Given a function $u:(h\Z)^d\to \R$, we define the discrete Laplacian as  
$$
(-\D_{\dis})u(hj):=\frac{1}{h^2}\sum_{i=1}^d\big(u(h(j+e_i))-2u(hj)+u(h(j-e_i))\big), \quad hj\in (h\Z)^d.
$$
Based on this definition of the Laplacian on the lattice $(h\Z)^d$, it is possible to define the \emph{fractional discrete Laplacian} by means of its heat semigroup representation or the associated semi-discrete Caffarelli--Silvestre extension (see Section \ref{sec:pre}). We denote the corresponding operator by $(-\D_{\dis})^s$.
Let us further write $u_j:=u(hj)$ to denote the value of $u$ at the mesh point $hj\in (h\Z)^d$. With this notation fixed, we observe that on the lattice, the direct counterpart of Theorem \ref{prop:unique} fails:

\begin{thm}
\label{thm:exterior_UCP}
Let $h \in \R_+$ and let $X \subset (h\Z)^d$ be a finite set of cardinality $M \in \N$. Then there exists a 
non-zero function $u\in \ell_{ s}$ such that $u_j=0=(-\D_{\dis})^s u_j = 0 $ for $j\in X$. 
\end{thm}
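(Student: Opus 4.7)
The strategy is a straightforward dimension count. The conditions $u_j = 0$ and $(-\D_{\dis})^s u_j = 0$ for $j \in X$ impose at most $2M$ linear constraints on $u$, so within any subspace of $\ell_s$ of dimension strictly larger than $2M$ one is guaranteed to find a nonzero vector satisfying all of them. The task thus reduces to producing a finite-dimensional subspace on which both families of conditions are linear and on which, ideally, one of the two batches can be arranged to be vacuous so as to gain dimensional margin.

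The plan is to use compactly supported data and to place their support \emph{away} from $X$. Concretely, I would fix any auxiliary set $Y \subset (h\Z)^d$ with $Y \cap X = \emptyset$ and $|Y| = M+1$, and let $V$ be the $(M+1)$-dimensional space of real-valued functions on $(h\Z)^d$ with $\supp u \subset Y$. Every $u \in V$ is automatically in $\ell_s$ by compact support, and the condition $u_j = 0$ for all $j \in X$ is automatic since $X \cap Y = \emptyset$. The remaining $M$ conditions are encoded by the linear map
\begin{equation*}
T \colon V \to \R^{M}, \qquad T(u) = \bigl((-\D_{\dis})^s u_j\bigr)_{j \in X}.
\end{equation*}
Because $\dim V = M+1 > M \ge \dim \mathrm{range}(T)$, the kernel of $T$ is nontrivial. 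Any nonzero $u \in \ker T$ is then the sought function: it belongs to $\ell_s$, it is not identically zero on $(h\Z)^d$ (its support is a nonempty subset of $Y$), and by construction it satisfies $u_j = 0 = (-\D_{\dis})^s u_j$ for every $j \in X$.

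I do not anticipate any serious obstacle; the whole argument is essentially linear algebra. The only point worth a brief check is that, on compactly supported data, $(-\D_{\dis})^s$ is a well-defined linear operator whose pointwise evaluation $u \mapsto (-\D_{\dis})^s u_j$ is a bona fide linear functional. This follows directly from the heat-semigroup or Caffarelli--Silvestre representations of $(-\D_{\dis})^s$ recalled in Section~\ref{sec:pre}, which express $(-\D_{\dis})^s u_j$ as an absolutely convergent sum $\sum_k K_s(j-k)\, u_k$ whenever $\supp u$ is finite, so no quantitative information about $K_s$ beyond its existence is required. It is worth underlining that, in sharp contrast to the continuum antilocality of Theorem~\ref{prop:unique}, the argument is purely nonconstructive and conveys no information about the shape of $u$ beyond the freedom to localize it inside $Y$; this is precisely the point the theorem is making.
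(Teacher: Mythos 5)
Your argument is correct and is essentially the same as the paper's: the paper likewise sets $u$ to vanish outside an $(M+1)$-point set $N$ disjoint from $X$ and observes that the resulting homogeneous system of $M$ equations in $M+1$ unknowns has a nontrivial solution. The only cosmetic difference is that you phrase the dimension count via the kernel of a linear map rather than via counting equations and unknowns.
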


We refer to Section \ref{sec:pre} for a definition of the space $\ell_s$. The result of Theorem \ref{thm:exterior_UCP} essentially reduces to the solvability of an underdetermined homogeneous linear system which is a consequence of the fact that the function $u$ can be chosen arbitrarily outside of $X$.

While the strongest version of the global unique continuation property of Theorem \ref{prop:unique} fails for discrete operators, we highlight that a weaker (still qualitative) counterpart of it persists in the form of global unique continuation properties from the exterior:

\begin{thm}
\label{prop:exterior_UCP1}
Let $d\geq 1$, $h\in \R_+$ and $u \in H^{r}((h\Z)^d)$ for some $r\in \R$. Let $s\in (0,1)$ and assume that for some $R\in \R_+$, $R\geq h$,
\begin{align*}
u = 0 = (-\D_{\dis})^s u \mbox{ in }(h\Z)^d \setminus B_{R}.
\end{align*}
Then, $u \equiv 0$ in $(h\Z)^d$.
\end{thm}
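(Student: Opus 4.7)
My plan is to work entirely on the Fourier side, exploiting that the hypothesis forces both $u$ and $(-\D_{\dis})^s u$ to be finitely supported. First I would observe that since $(h\Z)^d \cap B_R$ contains only finitely many lattice points, the assumption $u=0$ on $(h\Z)^d\setminus B_R$ forces $u$ to have finite support; in particular $u\in \ell^1((h\Z)^d)$, and its semi-discrete Fourier transform $\hat u$, defined on the dual torus $\T^d_h:=[-\pi/h,\pi/h]^d$, is a trigonometric polynomial. Similarly $(-\D_{\dis})^s u$ is finitely supported by assumption, so $\widehat{(-\D_{\dis})^s u}$ is a trigonometric polynomial as well. Since $(-\D_{\dis})^s$ is the Fourier multiplier with symbol $m(\xi)^s$, where
\begin{equation*}
m(\xi)\;=\;\frac{4}{h^2}\sum_{i=1}^d\sin^2\!\left(\frac{h\xi_i}{2}\right),
\end{equation*}
I conclude that $m(\xi)^s\hat u(\xi)$ is a trigonometric polynomial, and hence $C^\infty$ (indeed real-analytic) on $\T^d_h$.

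Next I would analyse the local behaviour of $m^s$ at its unique zero $\xi=0$ in $\T^d_h$. Taylor-expanding the sines gives $m(\xi)=|\xi|^2+O(|\xi|^4)$, so along any ray $\xi=r\omega$ with $\omega\in S^{d-1}$ one has $m(r\omega)^s = r^{2s}(1+O(r^2))$ as $r\to 0^+$. Now suppose toward contradiction that $\hat u$ does not vanish to infinite order at $\xi=0$. By real-analyticity, $\hat u(\xi)=P_k(\xi)+O(|\xi|^{k+1})$ near $0$ with $P_k$ a non-trivial homogeneous polynomial of some degree $k\geq 0$. Picking $\omega\in S^{d-1}$ with $P_k(\omega)\ne 0$, the restriction of $m^s\hat u$ to the ray $\xi=r\omega$ satisfies
\begin{equation*}
(m^s\hat u)(r\omega)\;=\;r^{2s+k}P_k(\omega)+O(r^{2s+k+1})\quad\text{as }r\to 0^+.
\end{equation*}
On the other hand, the multivariate Taylor expansion at $0$ of any $C^\infty$ function is a series in integer-degree homogeneous polynomials, so its restriction to a ray has an asymptotic expansion involving only integer powers of $r$. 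Since $2s+k\notin \N$ for $s\in(0,1)$ and $P_k(\omega)\ne 0$, the two expansions are incompatible, which is a contradiction. Hence $\hat u$ vanishes to infinite order at $0$, and real-analyticity then forces $\hat u\equiv 0$, i.e.\ $u\equiv 0$ on $(h\Z)^d$.

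The principal obstacle is the final non-smoothness step, namely justifying rigorously that a leading behaviour of non-integer order $r^{2s+k}$ along some ray is incompatible with $C^\infty$ regularity of $m^s\hat u$ at $\xi=0$. Everything else, i.e.\ the Fourier characterisation of $(-\D_{\dis})^s$ and the trigonometric-polynomial structure forced by the finite support of $u$ and $(-\D_{\dis})^s u$, is quite standard, so the essence of the proof lies in the careful local analysis of $m^s$ at its vanishing point.
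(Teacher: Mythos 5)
Your overall strategy---passing to the Fourier side, noting that both $\F_{(h\Z)^d}u$ and $\F_{(h\Z)^d}(-\D_{\dis})^su$ are trigonometric polynomials because both functions are finitely supported, and then deriving a contradiction from the singular behaviour of the symbol $m(\xi)^s$ at its zero $\xi=0$---is sound in spirit and is a real-variable cousin of the paper's argument, which instead extends $\F_{(h\Z)^d}u$ analytically into a complex strip and uses that any realization of $z\mapsto z^s$ along the image of $\sin^2$ must have a branch cut, so that $m^s=Q/\hat u$ cannot be meromorphic. However, your proof has a genuine gap at $s=\tfrac12$: the assertion that $2s+k\notin\N$ for $s\in(0,1)$ is false there, since $2s=1$. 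For $s=\tfrac12$ the restriction of $m^{1/2}\hat u$ to a ray has leading term $P_k(\omega)\,r^{k+1}$, an integer power of $r$, and the comparison with the integer-power Taylor expansion of a smooth function yields no contradiction at all. This is not a negligible edge case; $s=\tfrac12$ is precisely the exponent the paper concentrates on in its quantitative results.

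To repair this you need a separate argument at $s=\tfrac12$. In $d=1$, near $\xi=0$ one has $m^{1/2}(\xi)=\tfrac2h|\sin(h\xi/2)|=\sgn(\xi)\cdot\tfrac2h\sin(h\xi/2)$, so $Q(\xi)=\sgn(\xi)A(\xi)$ with $A$ analytic; since $Q$ and $A$ are analytic and agree on $(0,\epsilon)$, they agree identically, and likewise $Q\equiv-A$, forcing $A\equiv0$ and hence $\hat u\equiv0$. In $d\ge2$ one can square the identity to get $m\,\hat u^2=Q^2$ and use unique factorization in the Laurent polynomial ring $\C[e^{\pm ih\xi_1},\dots,e^{\pm ih\xi_d}]$ to show $\sum_i\sin^2(h\xi_i/2)$ is not a square of a rational trigonometric function, or argue directly that $|\xi|P_k(\xi)$ cannot be the leading homogeneous term of a smooth function unless $P_k\equiv0$. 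By contrast, the ``principal obstacle'' you flag for $s\ne\tfrac12$ is in fact routine: with $N=\lceil 2s+k\rceil$, compare the degree-$N$ Taylor polynomial $T_N(r)$ of $Q(r\omega)$ with the asymptotics $P_k(\omega)r^{2s+k}+O(r^{2s+k+1})$; dividing by $r^{2s+k}$ gives $T_N(r)r^{-2s-k}\to P_k(\omega)\ne0$, which is impossible for a polynomial when $2s+k\notin\Z$. So the missing ingredient is the case $s=\tfrac12$, not the rigor of the non-integer-power step.
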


Here for $R>0$, we have set $B_R=B_R(0)\cap(h\Z)^d$, with $h>0$; we refer to Section \ref{sec:pre} for a definition of the function space $H^r((h\Z)^d)$.

\begin{rmk}
We remark that while in Theorem \ref{prop:exterior_UCP1} we have required the condition $R\geq h$, this is strictly speaking not necessary. In this case however the statement would be true by definition.
\end{rmk}

Let us comment on the result of Theorem \ref{prop:exterior_UCP1}. It is an example of a result showing that while in the discrete setting the \emph{local}, qualitative variants of unique continuation can in many cases be easily violated due to the introduction of the discretization length scale, the more \emph{global} qualitative vanishing behavior is substantially more rigid. In our specific setting of the fractional discrete Laplacian the vanishing result from the exterior follows along similar lines as in the continuum and is a consequence of the fact that the analytic extension of the function $\xi \mapsto |\xi|^{s}$ for $s\in \R \setminus \Z$ must have a branch cut. Similar arguments had earlier been used in \cite{Isakov,L82,RS20a,RSV19}.
Exploiting this idea further, it is also possible to prove unique continuation for the fractional discrete Laplacian from the upper half-plane as a consequence of a similar Paley--Wiener type argument:

\begin{prop}
\label{prop:exterior_UCP}
Let $d\geq 1$ and $u \in H^{r}((h\Z)^d)$ for some $r\in \R$. Let $s\in (0,1)$ and assume that 
\begin{align*}
u = 0 = (-\D_{\dis})^s u \mbox{ in } \{x\in (h\Z)^d: \ x_{d}\geq 0\}.
\end{align*}
Then, $u \equiv 0$ in $(h\Z)^d$.
\end{prop}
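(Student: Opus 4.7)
The plan is to implement a Paley--Wiener/branch-cut scheme analogous in spirit to the one used for Theorem~\ref{prop:exterior_UCP1}, but now with the one-sided support in $\{x_d\geq 0\}$ replacing the complement of a ball. The geometric heart of the argument is that the vanishing of $u$ on the upper half-lattice produces holomorphy of a suitable Fourier transform in an upper half-plane in the single variable $\xi_d$, while the Fourier symbol of $(-\D_{\dis})^s$ develops a non-removable branch cut there.

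First, I would partially Fourier transform in the transversal variables $x'=(x_1,\dots,x_{d-1})$. Because the dual torus is compact and the symbol $m(\xi)=(4/h^2)\sum_{i=1}^d\sin^2(h\xi_i/2)$ of $(-\D_{\dis})$ is bounded, every $H^r((h\Z)^d)$ norm is equivalent to the $\ell^2$ norm, so I may assume $u\in\ell^2((h\Z)^d)$. For each fixed $\xi'\in(-\pi/h,\pi/h]^{d-1}$ the resulting one-dimensional sequences $\tilde u(\xi',\cdot)$ and $\widetilde{(-\D_{\dis})^s u}(\xi',\cdot)$ lie in $\ell^2(h\Z)$ and vanish on $\{x_d\geq 0\}$. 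The associated one-sided Fourier series
\[
g(\xi_d):=h\sum_{k=1}^{\infty}\tilde u(\xi',-kh)\,e^{ikh\xi_d},\qquad G(\xi_d):=h\sum_{k=1}^{\infty}\widetilde{(-\D_{\dis})^s u}(\xi',-kh)\,e^{ikh\xi_d}
\]
converge absolutely on $\{\Imm\xi_d>0\}$ by Cauchy--Schwarz against the geometric weight $e^{-kh\,\Imm\xi_d}$, and therefore define holomorphic functions on the upper half-plane; on the real axis they satisfy the multiplier identity $G(\xi_d)=m(\xi',\xi_d)^s\,g(\xi_d)$.

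The decisive step is the branch analysis of $\xi_d\mapsto m(\xi',\xi_d)^s$. Using the identity $\sin(h(a+ib)/2)=\sin(ha/2)\cosh(hb/2)+i\cos(ha/2)\sinh(hb/2)$ one finds
\[
m(\xi',ib)=\frac{4}{h^2}\sum_{i=1}^{d-1}\sin^2(h\xi_i/2)-\frac{4}{h^2}\sinh^2(hb/2),
\]
which is real and strictly negative once $b>b_0(\xi')>0$ is sufficiently large. With the principal branch of $z^s$, the function $\xi_d\mapsto m(\xi',\xi_d)^s$ is therefore holomorphic on the upper half-plane slit along the vertical ray $\{ib:b>b_0(\xi')\}$, and tracking $\Imm\sin^2(h\xi_d/2)=\tfrac{1}{2}\sin(ha)\sinh(hb)$ shows that $m$ approaches the negative $m$-axis from above as $\Ree\xi_d\to 0^+$ and from below as $\Ree\xi_d\to 0^-$, producing across the slit the jump $(e^{i\pi s}-e^{-i\pi s})|m|^s=2i\sin(\pi s)|m|^s$, which is non-zero for $s\in(0,1)$. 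By analytic continuation from the real axis, the identity $G=m^s g$ propagates to the slit upper half-plane; but $G$ and $g$ are holomorphic (hence continuous) across the slit, so the non-trivial jump of $m^s$ forces $g(ib)=0$ for every $b>b_0(\xi')$. The identity theorem then yields $g\equiv 0$ on the upper half-plane, hence $\tilde u(\xi',-kh)=0$ for every $k\geq 1$ and every $\xi'$, and inverting the partial Fourier transform in $\xi'$ gives $u\equiv 0$.

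The main obstacle I anticipate is making the analytic continuation of the identity $G=m^s g$ into the slit region, together with the boundary limits of $g$ and $G$ from either side of the slit, fully rigorous, so that the jump argument applies without ambiguity. The other ingredients---notably the reduction to $\ell^2$ via the equivalence of the $H^r$ norms on the compact dual torus, and the final passage from the vanishing of the holomorphic function $g$ back to the vanishing of $u$---are quite soft in this discrete setting.
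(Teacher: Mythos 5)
Your argument is correct and is essentially the paper's own proof: a Paley--Wiener reduction in the $\xi_d$-variable combined with the branch cut of the analytically continued symbol, the differences (reduction to $\ell^2$ by norm equivalence instead of mollification to higher Sobolev regularity, and an explicit location of the branch ray and computation of the jump rather than the surjectivity argument quoted for Theorem \ref{prop:exterior_UCP1}) being cosmetic. The one step you flag --- propagating $G=m^sg$ from the real axis into the slit half-plane --- closes by standard means: for every $\xi'$ with $C(\xi')>0$ (the complement is a null set in the dual torus and may be discarded) the function $m(\xi',\cdot)^s$ is holomorphic and bounded in a full complex neighbourhood of the real axis, so after the substitution $w=e^{ih\xi_d}$ the functions $g,G$ become $H^2(\mathbb{D})$ functions, $G-m^sg$ is holomorphic on an annulus abutting $\partial \mathbb{D}$ with vanishing nontangential boundary values there, hence vanishes (Luzin--Privalov, or the Cauchy/Poisson representation), and the identity theorem then extends $G=m^sg$ to the whole slit domain, where your jump computation applies.
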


\begin{rmk}
We emphasize that here we consider the special case of weak unique continuation from a fixed half-plane. In the local case, $s=1$, it is known that if the half-plane is tilted by $45^{\circ}$, it is possible to construct harmonic functions in $(h\Z)^2$ vanishing in this tilted half-plane, \cite{BLMS17}.
\end{rmk}

\subsection{Weak unique continuation for fractional discrete Schrödinger equations in slab domains}

While the main focus of our article is the investigation of the degree of the failure of the global unique continuation property for the fractional discrete Laplacian, we also briefly consider the weak unique continuation property for fractional discrete Schrödinger equations in slab domains. Although this is an in one direction global property and although we require the global validity of an equation, we show that for the fractional discrete Laplacian the weak unique continuation property from (thin) slab domains fails.

For simplicity, we only formulate and discuss the failure of weak unique continuation for slab domains in two dimensions and refer to Remark \ref{rmk:nD} for comments on extensions to the higher dimensional setting. More precisely, in the two-dimensional setting we record the following example:

\begin{prop}[Failure of weak unique continuation from slab domains]
\label{prop:slab_domain}
Let $s\in (0,1)$. 
Then there exist non-trivial sequences $\{u_j\}_{j\in \Z^2} \in \ell_s$ and $\{V_j\}_{j\in \Z^2} \in \ell^{\infty}_{(h\Z)^2}$ such that
\begin{align*}
(-\D_{\dis})^s u_j = V_j u_j \mbox{ for all } j=(j_1,j_2) \in \Z^2 \mbox{ and }
u_j = 0 \mbox{ for all } j_1 \in\{-1,0,1\}. 
\end{align*}
\end{prop}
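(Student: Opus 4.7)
The plan is to reduce the two-dimensional statement to a one-dimensional problem via a separable ansatz, and then to construct the one-dimensional building block by a finite-dimensional linear algebra argument. Seeking $u$ of the form $u(j_1, j_2) := \psi(j_1)$ independent of $j_2$, the product structure of the two-dimensional semi-discrete heat semigroup, together with the fact that constants are fixed by the semigroup in the $j_2$-variable, gives $e^{-t(-\D_{\dis})}u(j_1, j_2) = (e^{-tL}\psi)(j_1)$, where $L$ denotes the one-dimensional discrete Laplacian acting in $j_1$. The subordination representation of $(-\D_{\dis})^s$ then yields $(-\D_{\dis})^s u(j_1, j_2) = (L^s \psi)(j_1)$, independent of $j_2$. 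It therefore suffices to find a nontrivial $\psi : \Z \to \R$ such that $\psi = 0$ exactly on $\{-1, 0, 1\}$, $L^s \psi = 0$ on $\{-1, 0, 1\}$, and the pointwise quotient $L^s \psi / \psi$ is uniformly bounded off $\{-1, 0, 1\}$.

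To construct $\psi$, I would set $\psi := 1 - g$, where $g : \Z \to \R$ is compactly supported on $S := \{-1, 0, 1, 2, \ldots, M\}$ with $M$ sufficiently large (say $M = 6$). Since $L^s \mathbf{1} = 0$, the conditions on $\psi$ translate to $g|_{\{-1, 0, 1\}} = 1$ and $(L^s g)|_{\{-1, 0, 1\}} = 0$. Using the kernel representation
\[
L^s g(j) = A g(j) - \sum_{k \neq j} K_s(j - k) g(k),
\]
with summable symmetric kernel $K_s$ and $A := \sum_{m \neq 0} K_s(m) < \infty$ (see Section \ref{sec:pre}), the three equations at $\{-1, 0, 1\}$ reduce, after substituting $g|_{\{-1,0,1\}} = 1$, to an inhomogeneous $3 \times (M - 1)$ linear system in the free values $g(2), \ldots, g(M)$, whose rows are shifted restrictions of $K_s$. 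I expect this matrix to have full row rank $3$ for $M$ large enough, since the polynomial decay $K_s(m) \sim c_s |m|^{-1-2s}$ prevents $K_s$ from satisfying a three-term constant-coefficient linear recurrence asymptotically. The solution set is then a positive-dimensional affine subspace, and within it the conditions $g(k) \neq 1$ for $k = 2, \ldots, M$ exclude only finitely many affine hyperplanes, leaving a generic $g$ for which $\psi = 1 - g$ vanishes precisely on $\{-1, 0, 1\}$.

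Given such $g$, define $u(j_1, j_2) := 1 - g(j_1)$ and $V_j := -L^s g(j_1)/(1 - g(j_1))$ for $j_1 \notin \{-1, 0, 1\}$ (and $V_j := 0$ otherwise). Then $u$ is bounded and nontrivial, lies in $\ell_s$ since bounded sequences belong to $\ell_s$ for $d = 2$ and $s > 0$, vanishes on $\{-1, 0, 1\} \times \Z$, and satisfies $(-\D_{\dis})^s u = V u$ on all of $\Z^2$ by the reduction step above. The potential $V$ belongs to $\ell^\infty_{(h\Z)^2}$ because $L^s g \in \ell^\infty(\Z)$ (as $g$ has finite support and $K_s$ is summable) and $|1 - g(j_1)|$ is uniformly bounded below off the slab by construction. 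The main obstacle I foresee is the rank argument for the $3 \times (M - 1)$ matrix, which reduces to a short recurrence-exclusion statement for $K_s$ that follows from the asymptotic expansion recorded in Section \ref{sec:pre}.
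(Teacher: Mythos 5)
Your reduction of the two-dimensional statement to a one-dimensional one via the separable ansatz $u(j_1,j_2)=\psi(j_1)$ and the product structure of the semidiscrete heat kernel is exactly the paper's strategy (the paper justifies the interchange of the $j_2$-sum with the subordination integral using the nonnegativity of $I_k$ and the identity $\sum_m e^{-2t}I_m(2t)=1$; you should record this, but it is routine). Where you diverge is in the one-dimensional building block. The paper takes the antisymmetric step function $\tilde u_j=\sgn(j)$ for $|j|\geq 2$, $\tilde u_j=0$ on $\{-1,0,1\}$, for which $(-\D_{\dis})^s\tilde u_0=0$ by oddness and $(-\D_{\dis})^s\tilde u_{\pm1}$ telescopes to the explicit value $-K_s^h(1)-K_s^h(2)$; a single two-point correction supported at $j=\pm2$ with an explicitly computable amplitude $a$ then kills the remaining two conditions. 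This yields a completely explicit example with no solvability issue. Your ansatz $\psi=1-g$ instead produces a $3\times(M-1)$ inhomogeneous system, and its solvability is the crux.

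Two points in your argument are genuinely incomplete. First, the full-row-rank claim does \emph{not} follow from the asymptotics $K_s^h(m)\sim c_s h^{-2s}|m|^{-1-2s}$ alone: an asymptotic statement cannot rule out a linear relation among three finite row vectors indexed by $2,\dots,M$. It \emph{can} be proved from the closed form \eqref{eq:kernelKs}: since $K_s^h(n+1)/K_s^h(n)=(n-s)/(n+1+s)$, a relation $aK_s^h(n+2)+bK_s^h(n+1)+cK_s^h(n)=0$ on a window of length $\geq 3$ forces a polynomial identity in $n$ whose coefficients give a $3\times3$ homogeneous system with nonzero determinant for $s\in(0,1)$, so $a=b=c=0$; you need to supply this (or an equivalent) computation. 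Second, your genericity step assumes that none of the hyperplanes $\{g(k)=1\}$, $k=2,\dots,M$, contains the entire affine solution set; if one did, every solution would have $u=0$ at a point where you have not arranged $(-\D_{\dis})^s u=0$, and the equation $(-\D_{\dis})^su=Vu$ would fail there for every $V$. This also needs an argument (or a workaround, e.g.\ additionally imposing $L^sg(k)=0$ at any such forced zero, or enlarging $M$). Both gaps are fixable, but as written the proof is not complete; the paper's explicit construction avoids them entirely at the cost of being less flexible (cf.\ Remark \ref{rmk:wucp}(ii) on thicker slabs, where an invertibility issue of precisely your type reappears).
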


\begin{rmk}
\label{rmk:wucp}
A few remarks are in order.
\begin{itemize}
\item[(i)] \emph{Comparison with the discrete Laplacian in two dimensions.} For the discrete (five-point) Laplacian in two dimensions, examples of the above type are \emph{not} possible since the vanishing of $u$ in the slab would be propagated to the full domain. The existence of such an example thus is a consequence of the stronger nonlocality of the fractional discrete Laplacian. However, as we pointed out above, if the slab domain is tilted by $45^{\circ}$ degrees, then the discrete Laplacian does not satisfy weak unique continuation from that domain.
\item[(ii)] \emph{Arbitrarily thick slab domains.} It would be desirable to extend the above example to an example in which the slab is arbitrarily thick (and not only of a thickness of three points $j_1\in \{0,\pm 1\}$). While for a finite thickness such examples can be constructed (for relatively large values this is still possible with the aid of e.g. symbolic Mathematica computations), for an arbitrarily large thickness this in general reduces to an invertibility condition for a matrix with entries given by the kernel $K_s^h$ (see Section \ref{sec:pre} for its definition) evaluated at suitable points. If it is possible to ensure the invertibility of this matrix and the non-vanishing of the components of the solution to an associated inhomogeneous equation, one would obtain a general example of the failure of the UCP in discrete slab domains.
\end{itemize}
\end{rmk}

We will prove Proposition \ref{prop:slab_domain} in Section \ref{sec:wucp}, where we will reduce the result to a corresponding one-dimensional example.

\subsection{Quantitative (global and boundary-bulk) unique continuation}

A class of results which is strongly related to the unique continuation properties of the fractional Laplacian consists of boundary-bulk inequalities for (weighted) elliptic equations \cite{JL99,BL15,RS20,Rue21}. Indeed, the relation between boundary-bulk unique continuation properties and unique continuation estimates for the fractional Laplacian follows from the characterization of the fractional Laplacian by means of the Caffarelli--Silvestre extension \cite{CS07}: In the continuum setting, for $s\in (0,1)$, the fractional Laplacian can be characterized as a (weighted, weakly defined) Dirichlet-to-Neumann map:
\begin{align*}
(-\D)^s u(x) = c_{s}\lim\limits_{t\rightarrow 0} t^{1-2s} \p_{t} \tilde{u}(x,t),
\end{align*}
where $c_{s}\neq 0$ is a real-valued explicit constant and $\tilde{u}(x,t)$ is a solution to
\begin{align}
\label{eq:CS1}
\begin{split}
\nabla \cdot t^{1-2s} \nabla \tilde{u} & = 0 \mbox{ in } \R^{d+1}_+,\\
\tilde{u} & = u \mbox{ on } \R^d \times \{0\}.
\end{split}
\end{align}
Here $\nabla := (\nabla_{x},\p_t)^T$.
This representation in particular allows one to formulate (weak, strong and global) unique continuation properties for the fractional Laplacian by means of \emph{boundary} unique continuation for the solutions to the (degenerate) elliptic equation \eqref{eq:CS1} (see for instance \cite{FF14,FF15,GFR19,Rue15,Rue17,Yu17}). In the sequel, in order to avoid additional technical difficulties, we focus on the case $s= \frac{1}{2}$ for which \eqref{eq:CS1} turns into the harmonic extension of the function $u$. In this setting, it is well-known \cite{JL99} that the following boundary-bulk inequality holds:

\begin{thm}[Quantitative boundary-bulk unique continuation in the continuum, \cite{JL99}] 
\label{thm:bdry_bulk}
Let $u\in H^{1}(\R^d)$ and let $\tilde{u}$ be a weak solution to
\begin{align*}
(\D +  \p^2_t)\tilde{u} & = V \tilde{u} \mbox{ in } \R^{d+1}_+,\\
\tilde{u} & = u \mbox{ on } \R^d \times \{0\},
\end{align*}
for $V\in L^{\infty}(\R^{d+1}_+)$. Then, there exist constants $C>0$ (depending only on $d,\|V\|_{L^{\infty}(\R^{d+1}_+)}$) and $\alpha \in (0,1)$ (depending only on $d$) such that
\begin{align}
\label{eq:bdry_bulk}
\|\tilde u\|_{L^2(B_1^+)} \leq C \|\tilde{u}\|_{L^2(B_4^+)}^{1-\alpha}\left( \|u\|_{H^1(B_4')} + \|\p_{t} \tilde{u}\|_{L^2(B_4')} \right)^{\alpha}.
\end{align}
\end{thm}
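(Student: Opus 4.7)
The plan is to obtain \eqref{eq:bdry_bulk} from a boundary Carleman estimate for the operator $L:=\Delta_x+\p_t^2$ on the half-ball $B_4^+$, followed by an optimization in the large parameter. Concretely, I would invoke (or derive) a Carleman inequality of the form
\[
\tau^{3}\int_{B_4^+}e^{2\tau\phi}|w|^2 + \tau\int_{B_4^+}e^{2\tau\phi}|\nabla w|^2 \leq C\int_{B_4^+}e^{2\tau\phi}|Lw|^2 + C(1+\tau)^{N}\bigl(\|w(\cdot,0)\|_{H^1(B_4')}^2 + \|\p_t w(\cdot,0)\|_{L^2(B_4')}^2\bigr),
\]
valid for all smooth $w$ vanishing near $\p B_4\cap\overline{\R^{d+1}_+}$ and all $\tau\geq\tau_0$, where $\phi$ is a suitably chosen radial, logarithmically convex weight. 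Such boundary Carleman estimates are standard within H\"ormander's pseudoconvex framework and have been developed for the (weighted) Caffarelli--Silvestre extension in, e.g., \cite{Rue15,FF14,Yu17}.

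I would then apply this estimate to $w=\eta\tilde u$, where $\eta\in C_c^\infty(B_4)$ is a radial cutoff with $\eta\equiv 1$ on $B_3$. Since $Lw = \eta V\tilde u + [L,\eta]\tilde u$, the potential contribution is absorbed into the $\tau^3$-gain on the left-hand side for $\tau\geq\tau_0(\|V\|_{L^\infty})$, while the commutator term is supported in the annulus $B_4\setminus B_3$ and, via a Caccioppoli estimate, is controlled by $Ce^{2\tau\phi_2}\|\tilde u\|_{L^2(B_4^+)}^2$ with $\phi_2:=\sup_{B_4\setminus B_3}\phi$. Since $\eta\equiv 1$ on $B_3'$, the boundary remainder reduces to the Cauchy data of $\tilde u$ on $B_4'$. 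Restricting the left-hand side to $B_1^+$, where $\phi\geq\phi_1:=\inf_{B_1^+}\phi$, and designing $\phi$ so that $\phi_2<\phi_1<\phi_b:=\sup_{B_4'}\phi$, I arrive at an inequality of the shape
\[
\|\tilde u\|_{L^2(B_1^+)} \leq C e^{-\tau a}\|\tilde u\|_{L^2(B_4^+)} + C(1+\tau)^{N/2} e^{\tau b}\bigl(\|u\|_{H^1(B_4')} + \|\p_t\tilde u\|_{L^2(B_4')}\bigr),
\]
with $a:=\phi_1-\phi_2>0$ and $b:=\phi_b-\phi_1>0$.

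Denoting by $A$ and $B$ the outer factors on the right, balancing the two contributions by choosing $\tau=\log(A/B)/(a+b)$ (whenever this exceeds $\tau_0$; otherwise \eqref{eq:bdry_bulk} holds trivially from $\|\tilde u\|_{L^2(B_1^+)}\leq \|\tilde u\|_{L^2(B_4^+)}$) yields \eqref{eq:bdry_bulk} with $\alpha=a/(a+b)\in(0,1)$, depending only on $d$. I expect the principal obstacle to lie in the boundary Carleman estimate itself: one must construct a weight $\phi$ such that, after conjugating $L$ by $e^{\tau\phi}$ and integrating by parts, the residual boundary integrals at $\{t=0\}$ involve only \emph{first-order} (i.e., Cauchy) data of $w$, and such that the interior quadratic form controls $|w|^2$ and $|\nabla w|^2$ with the required powers of $\tau$. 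A classical remedy is to choose $\phi$ radial with a boundary trace that is a pure logarithm, so that its normal derivative vanishes at $\{t=0\}$ and the boundary form collapses to the advertised combination of Dirichlet and Neumann norms.
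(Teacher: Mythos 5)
This theorem is quoted from \cite{JL99} and is not proved in the paper, so there is no internal proof to compare against; your sketch is the standard boundary Carleman-estimate argument and is sound in outline (including the handling of the small-$\tau$ case and the choice $\alpha=a/(a+b)$). It is moreover essentially the same blueprint the paper follows for its discrete analogue (Theorems \ref{thm:Carl_disc} and \ref{thm:boundary_bulk_discrete}): a pseudoconvex weight (there $\phi(x,t)=-|x|^2+c_0(t^2/2-t)$, concave tangentially and convex normally, with $\p_t\phi(\cdot,0)\neq 0$ and the boundary terms simply estimated by the Cauchy data, rather than your radial weight with vanishing normal derivative), a cutoff whose commutator is supported in an annulus, absorption of $V$ for $\tau\gtrsim\|V\|_{L^\infty}^{2/3}$, and optimization in $\tau$.
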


Here and in the following sections, for $x_0 \in \R^d \times \{0\}$, we use the notation $B_r^+(x_0):=\{(x,t)\in \R^{d+1}_+: \ |(x,t)-x_0|<r\}$ and $B_r'(x_0):= \{(x,0)\in \R^d \times \{0\}: \ |x-x_0|<r\}$ and with an abuse of notation, we will also denote $B_r^+(x_0)=B_r^+(x_0)\cap ((h\Z)^d\times\R_+)$ and $B_r'(x_0)=B_r'(x_0)\cap (h\Z)^d$ for $x_0 \in (h\Z)^d \times \{0\}$. If $x_0 =0$, we also omit the center point for the ease of notation.

If combined with trace inequalities and global assumptions on the function $u$, a quantitative estimate as in \eqref{eq:bdry_bulk} can be transferred into a boundary doubling inequality for the fractional Laplacian with $s= \frac{1}{2}$ and into the global unique continuation property \cite{BL15, GSU16, Rue21, RS20}. Equation \eqref{eq:bdry_bulk} thus provides a central quantitative unique continuation estimate for the half-Laplacian. 

Seeking to study the discrete fractional Laplacian and its rigidity and flexibility properties, we here investigate the analogous question on the lattice: Consider a function $\tilde{u}$ solving the equation
\begin{align}
\label{eq:CS_discr}
\begin{split}
(\p_t^2 + \D_{\dis}) \tilde{u} &= V \tilde{u} \mbox{ in } (h \Z)^d \times \R_+,\\
\tilde{u} & =   u \mbox{ on } (h \Z)^d \times \{0\},
\end{split}
\end{align}
where $V: (h \Z)^d \times \R_+ \rightarrow \R$ is a bounded potential. 

The observations from Theorem \ref{thm:exterior_UCP} imply that the estimate \eqref{eq:bdry_bulk} fails in general in the discrete setting. However, building on the results in \cite{BHLR10a, BHR10, GM13,GM14,LM15,LM17, FBRRS20}, we expect that it only fails up to exponentially small correction terms in the lattice spacing $h>0$ and that in the limit $h \rightarrow 0$ the result of the continuum is recovered. We prove that this is indeed the case:

\begin{thm}
\label{thm:boundary_bulk_discrete}
Let $u\in H^{1}((h\Z)^d)$ and $\tilde{u}: (h \Z)^d \times \R_+ \rightarrow \R$ be a solution to 
\eqref{eq:CS_discr}.
Then, there exist $h_0>0$, $C>1$ (depending on $\|V\|_{L^{\infty}((h\Z)^d \times \R_+)}$ and $d$) and $r_0\in (0,1)$, $\alpha \in (0,1)$ (depending only on $d$) such that for all $h\in (0,h_0)$ it holds that
\begin{align}
\label{eq:discrete_bdry_bulk}
\begin{split}
\|\tilde{u}\|_{L^2(B_{r_0}^+)} 
&\leq C \max\{\|\tilde{u}\|_{L^2(B_{1}^+)}, \|u\|_{H^1(B_1')} + \|\p_t \tilde{u}\|_{L^2(B_{1}')} \}^{1-\alpha} (\|u\|_{H^1(B_{1}')} + \|\p_t \tilde{u}\|_{L^2(B_{1}')})^{\alpha} \\
& \quad + C e^{-Ch^{-1}}\|\tilde{u}\|_{L^2(B_{1}^+)}.
\end{split}
\end{align}
\end{thm}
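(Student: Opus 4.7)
The plan is to derive \eqref{eq:discrete_bdry_bulk} from a semi-discrete Carleman estimate for the operator $\p_t^2 + \D_{\dis}$ on $(h\Z)^d \times \R_+$ combined with an optimization in the Carleman parameter $\tau$. The intermediate target is a three-balls-type inequality of the form
\begin{equation}
\label{eq:plan_interp}
\|\tilde{u}\|_{L^2(B_{r_0}^+)} \leq C e^{C\tau}\bigl(\|u\|_{H^1(B_1')} + \|\p_t \tilde{u}\|_{L^2(B_1')}\bigr) + C e^{-c\tau}\|\tilde{u}\|_{L^2(B_1^+)},
\end{equation}
valid uniformly for every $\tau \in [\tau_0, c_0 h^{-1}]$, with constants depending only on $d$ and $\|V\|_{L^\infty}$. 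If the value of $\tau$ that minimizes the right-hand side of \eqref{eq:plan_interp} lies in the interior of this interval, the standard balancing argument produces the H\"older interpolation with exponent $\alpha\in(0,1)$ and yields the first term of \eqref{eq:discrete_bdry_bulk}; if instead this optimal value would exceed $c_0 h^{-1}$, one is forced to insert the endpoint $\tau = c_0 h^{-1}$ into \eqref{eq:plan_interp}, which produces exactly the residual $Ce^{-Ch^{-1}}\|\tilde{u}\|_{L^2(B_1^+)}$. The $\max\{\,\cdot\,,\,\cdot\,\}$ in the statement absorbs both regimes into a single expression.

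For the semi-discrete Carleman estimate itself I would use a radial pseudoconvex weight of logarithmic type, $\varphi(x,t) = \phi(-\log|(x,t)-x_\ast|)$, with base point $x_\ast$ placed slightly below the hyperplane $\{t=0\}$, in the spirit of the continuum strategy of \cite{JL99,Rue15}. After conjugation, $L_\varphi := e^{\tau\varphi}(\p_t^2 + \D_{\dis})e^{-\tau\varphi}$, one splits $L_\varphi = S+A$ into its formally symmetric and antisymmetric parts; the positivity of the commutator $\tau[S,A]$, combined with an integration by parts in $t$, delivers the bulk bound together with the boundary contributions at $\{t=0\}$. The discretization enters through the fact that $\D_{\dis}$ acts via finite-difference shifts $u(x\pm he_i)$, so the conjugation generates multiplicative exponential factors $e^{\tau(\varphi(x\pm he_i)-\varphi(x))}$. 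These remain uniformly bounded in $h$ only under the threshold $\tau h \leq c_0$; within this regime their Taylor expansions produce $O(\tau^k h^k)$ remainders proportional to derivatives of $\varphi$, which can be absorbed into the leading commutator gain by choosing $\phi$ convex enough and $\tau_0$ large, following the general scheme developed in \cite{BHR10,BHLR10a,FBRRS20}. It is precisely this cutoff $\tau \leq c_0 h^{-1}$ which produces the exponentially small correction in \eqref{eq:discrete_bdry_bulk}.

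To pass from the Carleman estimate to \eqref{eq:plan_interp} I apply it to $\eta \tilde{u}$, where $\eta$ is a smooth radial cutoff equal to $1$ on a level set $\{\varphi \geq \varphi_1\}\supset B_{r_0}^+$ and supported inside $\{\varphi \geq \varphi_2\}\subset B_1^+$ with $\varphi_1 > \varphi_2$. The commutator $[\p_t^2+\D_{\dis},\eta]\tilde{u}$ is supported on the annular region where $\nabla \eta \neq 0$ and where $e^{\tau\varphi} \leq e^{\tau\varphi_2}$; this region therefore contributes a residual of order $e^{\tau(\varphi_2-\varphi_1)}\|\tilde u\|_{L^2(B_1^+)}$, producing the $e^{-c\tau}\|\tilde{u}\|_{L^2(B_1^+)}$ factor. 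The term $V\tilde{u}$ is absorbed into the leading-order bound provided $\tau \geq \tau_0(\|V\|_{L^\infty})$, and the boundary terms at $t=0$ generated by integrating $\p_t^2$ by parts against $e^{2\tau\varphi}\eta^2\tilde{u}$ yield the traces $\|u\|_{H^1(B_1')} + \|\p_t\tilde u\|_{L^2(B_1')}$, up to harmless constants, after combining the Dirichlet and Neumann contributions.

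The main obstacle is the precise derivation of the semi-discrete Carleman estimate: one must carefully track the Taylor remainders arising from the shift exponentials $e^{\tau(\varphi(x\pm he_i)-\varphi(x))}$ and show that they are controlled by the leading commutator gain uniformly for $\tau h \leq c_0$, which requires sharp bookkeeping of the weighted finite-difference identities and sufficient convexity of $\phi$. Once this Carleman estimate is in hand, the remaining steps parallel the continuum proof of Theorem \ref{thm:bdry_bulk}.
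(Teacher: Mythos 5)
Your overall strategy is the same as the paper's: a semi-discrete Carleman estimate valid only for $\tau \in [\tau_0, c_0 h^{-1}]$, applied to $\eta\tilde u$ for a cutoff $\eta$, followed by optimization in $\tau$, where hitting the cap $\tau = c_0 h^{-1}$ is exactly what generates the residual $Ce^{-Ch^{-1}}\|\tilde u\|_{L^2(B_1^+)}$. The one substantive divergence is your choice of Carleman weight. You propose a radial logarithmic weight $\phi(-\log|(x,t)-x_\ast|)$ with base point shifted below $\{t=0\}$, in the continuum style of \cite{JL99,Rue15}. The paper instead uses the polynomial, additively separable weight $\phi(jh,t) = -|jh|^2 + c_0(\tfrac12 t^2 - t)$, and this choice is not cosmetic: because the weight is a sum of a function of the tangential variables and a function of $t$, the conjugated tangential and normal operators commute, so the symmetric/antisymmetric splitting and the commutator computation decouple into a purely discrete tangential piece (with explicit $\sinh/\cosh$ factors that linearize under $\tau h \le \delta_0$) and a purely continuous normal piece. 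The paper explicitly points out that this polynomial structure is what allows a \emph{direct} conjugation in the discrete setting, avoiding the freezing arguments of \cite{FBRRS20}. With your non-separable logarithmic weight, the shift factors $e^{\tau(\varphi(x\pm he_i,t)-\varphi(x,t))}$ depend jointly on $x$ and $t$, the tangential and normal conjugated operators no longer commute, and the "sharp bookkeeping of Taylor remainders" you defer to is precisely where the argument becomes delicate; you would most likely be forced back to the freezing/localization machinery of \cite{FBRRS20} rather than the clean direct computation the paper achieves. So your route is plausible but harder at its critical step; if you adopt the paper's separable polynomial weight (whose normal part is convex and non-characteristic at $t=0$, playing the role of your shifted base point, while the tangential part is concave, so pseudoconvexity still has to be exploited), the rest of your outline — absorption of $V$ for $\tau^{3/2}\gtrsim \|V\|_{L^\infty}$, the annular support of the commutator error, the boundary traces from integrating $\p_t^2$ by parts, and the two-regime optimization in $\tau$ — matches the paper's proof essentially verbatim.
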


 Theorem \ref{thm:boundary_bulk_discrete} illustrates that the boundary-bulk unique continuation estimates only ``barely'' fail with correction terms that decay exponentially in the lattice size. This is analogous to the bulk doubling estimates from \cite{FBRRS20} (to which this could be reduced if additional vanishing assumptions for $u$ or $\p_t \tilde{u}$ were assumed). 
As in \cite{FBRRS20} a key ingredient towards obtaining this result is given by a robust Carleman inequality (see Theorem \ref{thm:Carl_disc}) which, in particular, allows us to treat equations with potentials. In studying unique continuation properties for (semi-)discrete elliptic equations, as in the continuum, alike in \cite{FBRRS20} and contrary to many earlier works on control theory for (semi-)discrete elliptic equations, a technical challenge arises in that we are forced to consider Carleman weight functions which are not convex but only \emph{pseudoconvex}.

As an application of the boundary-bulk doubling inequality from above, we prove that the global unique continuation property from Theorem \ref{prop:unique} persists in a certain sense if one is ``sufficiently close'' to the continuum setting and if global information on the data is present. To this end, we consider the inverse problem of recovering a function $f\in C_c^{\infty}(W)$ from partial measurements of its half-Laplacian $(-\D_{\dis})^{\frac{1}{2}}f|_{\Omega}$ on the open domain $\Omega$ which we assume to be disjoint from the open set $W$ (see Figure \ref{fig:inverse}). 
Although the global UCP fails in the discrete setting, we can use the observations from Theorem \ref{thm:boundary_bulk_discrete} to still infer a stability estimate for this inverse problem:

\begin{thm}
\label{thm:application}
Let $d\geq 1$ and let $W, \Omega \subset (h\Z)^d$ be non-empty, open sets with $\overline{W}\cap \overline{\Omega} = \emptyset$. Let $h_0>0$ be as in Theorem \ref{thm:boundary_bulk_discrete} and assume that $h\in (0,h_0)$.
Let $f\in C_c^{\infty}(W)$. Then there exists $\nu \in (0,1)$ (depending only on the dimension $d$ and the domains $W,\Omega$) such that if
\begin{align}
\label{eq:continuum_aprox}
0<h_0 \leq 10^{-1}  \Big|\log\Big(\frac{\|(-\D_{\dis})^{\frac{1}{2}} f\|_{L^2(\Omega)}}{\|f\|_{H^1(W)}} \Big) \Big|^{-1+\nu}\Big| \log\Big(-C \log\Big( \frac{\|(-\D_{\dis})^{\frac{1}{2}} f\|_{L^2(\Omega)}}{\|f\|_{H^1(W)}}  \Big) \Big) \Big|^{-1},
\end{align}
the following estimate holds:
\begin{align*}
\|f\|_{L^2(W)}& \leq C \|f\|_{H^1(W)} \Big|\log\Big(\frac{\|(-\D_{\dis})^{\frac{1}{2}} f\|_{L^2(\Omega)}}{\|f\|_{H^1(W)}} \Big) \Big|^{-\nu} \\
& \quad +  C\exp\Big(- (Ch)^{-1} \Big|\log\Big(\frac{\|(-\D_{\dis})^{\frac{1}{2}} f\|_{L^2(\Omega)}}{\|f\|_{H^1(W)}} \Big) \Big|^{-1+\nu} \Big)\|f\|_{H^1(W)}.
\end{align*} 
\end{thm}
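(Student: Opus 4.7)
The plan is to combine the discrete Caffarelli--Silvestre extension with the boundary-bulk estimate from Theorem \ref{thm:boundary_bulk_discrete} and a chain-of-balls propagation of smallness, then balance the number of iterations against the exponential discretization error to extract the logarithmic rate with exponent $\nu$.

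\emph{Step 1: Reduction to an extension problem.} I would let $\tilde{u}:(h\Z)^{d}\times \R_{+}\to \R$ be the discrete harmonic extension of $f$, i.e.\ the solution to \eqref{eq:CS_discr} with $V\equiv 0$ and boundary datum $\tilde{u}(\cdot,0)=f$. With the normalization from Section \ref{sec:pre}, the discrete half-Laplacian is identified with (a constant multiple of) $-\p_{t}\tilde{u}(\cdot,0)$, and standard semigroup/energy bounds give $\|\tilde u\|_{L^{2}((h\Z)^{d}\times\R_{+})}+\|\p_{t}\tilde u\|_{L^{2}((h\Z)^{d}\times\R_{+})}\leq C\|f\|_{H^{1}(W)}$. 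The disjointness $\overline{W}\cap\overline{\Omega}=\emptyset$ together with $f\in C_{c}^{\infty}(W)$ implies $\tilde u(\cdot,0)=0$ on a neighborhood of $\Omega$, so that at a point $x_{0}\in \Omega$ Theorem \ref{thm:boundary_bulk_discrete} applied on $B_{1}^{+}(x_{0})$ already yields the initial bulk smallness
\begin{equation*}
\|\tilde u\|_{L^{2}(B_{r_{0}}^{+}(x_{0}))}\leq C\|f\|_{H^{1}(W)}^{1-\alpha}\|(-\D_{\dis})^{1/2}f\|_{L^{2}(\Omega)}^{\alpha}+Ce^{-C/h}\|f\|_{H^{1}(W)},
\end{equation*}
after writing $\epsilon:=\|(-\D_{\dis})^{1/2}f\|_{L^{2}(\Omega)}/\|f\|_{H^{1}(W)}$ and absorbing constants.

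\emph{Step 2: Propagation of smallness into a half-ball above $W$.} I would then chain interior three-ball inequalities for the semi-discrete elliptic equation $(\p_{t}^{2}+\D_{\dis})\tilde u=0$; these are available from the same Carleman inequality of Theorem \ref{thm:Carl_disc} that underlies Theorem \ref{thm:boundary_bulk_discrete}, and each step produces a Hölder interpolation with exponent $\alpha'\in(0,1)$ plus an additive error of size $Ce^{-C/h}\|f\|_{H^{1}(W)}$. Fixing a path of half-balls from $B_{r_{0}}^{+}(x_{0})$ with $x_{0}\in\Omega$ to some $B_{r}^{+}(x_{1})$ with $x_{1}\in W$ of length $N=N(W,\Omega,d)$, iterating gives
\begin{equation*}
\|\tilde u\|_{L^{2}(B_{r}^{+}(x_{1}))}\leq C\|f\|_{H^{1}(W)}\bigl(\epsilon^{\beta}+Ne^{-C/h}\bigr),\qquad \beta:=(\alpha')^{N}\alpha\in(0,1),
\end{equation*}
provided we stay in the regime where the bulk Hölder term dominates the propagated exponential error at every step. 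This regime is precisely what the hypothesis \eqref{eq:continuum_aprox} encodes.

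\emph{Step 3: Boundary recovery and optimization giving $\nu$.} To transfer the bulk smallness to $\|f\|_{L^{2}(W)}$, I would write $f(x)=\tilde u(x,\tau)-\int_{0}^{\tau}\p_{t}\tilde u(x,s)\dis s$, apply Cauchy--Schwarz in $s$, average $\tau\in (0,r)$, and use a discrete Caccioppoli estimate to bound $\|\p_{t}\tilde u\|_{L^{2}(B_{r}^{+}(x_{1}))}$ by $\|f\|_{H^{1}(W)}$. This produces
\begin{equation*}
\|f\|_{L^{2}(W\cap B_{r}(x_{1}))}^{2}\leq C\bigl(\tau^{-1}\|\tilde u\|_{L^{2}(B_{r}^{+}(x_{1}))}^{2}+\tau\|f\|_{H^{1}(W)}^{2}\bigr),
\end{equation*}
and after covering $W$ by finitely many such half-balls and choosing $\tau$ so as to balance the two terms, I would obtain a bound of Hölder type in the quantity on the right of Step 2. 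The exponent $\nu$ arises from a final interpolation between this Hölder estimate and the trivial bound $\|f\|_{L^{2}(W)}\leq\|f\|_{H^{1}(W)}$: optimizing over the depth of the propagation chain and the interpolation parameter simultaneously converts the Hölder rate in $\epsilon$ into the logarithmic rate $|\log\epsilon|^{-\nu}$, and replacing $N$ by its optimal choice is what forces the two-logarithm structure on the right-hand side of \eqref{eq:continuum_aprox}. The condition on $h_{0}$ is then exactly what guarantees that $Ne^{-C/h}$ stays below $\epsilon^{\beta}$ throughout the iteration, so that the ``bad'' $h$-dependent term in the final bound reduces to the second summand in the statement.

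\emph{Main obstacle.} The delicate point is the careful bookkeeping of the additive exponential errors $Ce^{-C/h}\|f\|_{H^{1}(W)}$ through the chain of Hölder interpolations: they accumulate multiplicatively at each iteration and, once the optimization in the number of steps is performed as a function of $\epsilon$, must be compared with the main smallness term. This is what constrains the range of admissible $\nu$ and dictates the precise form of the constraint \eqref{eq:continuum_aprox} on $h_{0}$; verifying that the condition actually guarantees a non-trivial balance between the propagated smallness and the discretization correction is the real substance of the proof.
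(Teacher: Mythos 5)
Your overall strategy coincides with the paper's: extension, boundary--bulk estimate at a point of $\Omega$, a chain of three-balls inequalities into the upper half-space, and a trace estimate back down to $W$, followed by an optimization. However, the displayed claims in Steps 2--3 contain two concrete errors located exactly at what you yourself call the ``real substance,'' and as written they would not yield the theorem.

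First, in Step 2 you propagate smallness to a half-ball $B_r^+(x_1)$ centered at $x_1\in W$ along a chain of \emph{fixed} length $N=N(W,\Omega,d)$, obtaining a fixed Hölder exponent $\beta$. This cannot work: such a half-ball contains points arbitrarily close to $W\times\{0\}$, where $\tilde u$ equals $f$ itself, so smallness there is precisely what is being proved and cannot be an intermediate output of interior three-balls inequalities. The correct target is the slab $W\times[\mu,1]$ at a positive height $\mu$, and covering it by balls contained in the open upper half-space forces the chain length to be $N\sim C(d,W,\Omega)(|\log\mu|+1)$. Since $\mu$ is ultimately chosen as a negative power of $|\log\epsilon|$ (with $\epsilon=\|(-\D_{\dis})^{1/2}f\|_{L^2(\Omega)}/\|f\|_{H^1(W)}$), the exponent $\alpha^N\sim\mu^{C_1|\log\alpha|}$ degenerates with $\epsilon$; this degeneration is the mechanism that converts the Hölder rate $\epsilon^{\alpha^N}$ into the logarithmic rate $|\log\epsilon|^{-\nu}$, with $\nu$ fixed by the balance $1-2C_1|\log\alpha|\nu=\nu$. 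A fixed $N$ would give Hölder stability, which is false for this severely ill-posed problem. Relatedly, in Step 3 the averaging of $\tau$ must be restricted to $[\mu,2\mu]$ (as in Lemma \ref{lem:trace_aux}), producing the term $\mu^{-1/2}\|\tilde u\|_{L^2(W\times[\mu,2\mu])}$; averaging over all of $(0,r)$ reintroduces $\|\tilde u\|$ at heights near $0$ over $W$ and is circular.

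Second, the additive discretization error after $N$ steps is not $Ne^{-C/h}$ but $NC^Ne^{-c\alpha^{N-1}h^{-1}}$: each application of the three-balls inequality raises the previously accumulated error to the power $\alpha$, so the constant in the exponential shrinks geometrically along the chain. With $N\sim|\log\mu|$ and $\mu\sim|\log\epsilon|^{-2\nu}$ one gets $\alpha^{N-1}\sim|\log\epsilon|^{-1+\nu}$, which is exactly the exponent $-(Ch)^{-1}|\log\epsilon|^{-1+\nu}$ appearing in the second term of the theorem, and comparing this degraded exponential with the first term is what dictates the double-logarithm structure of the hypothesis \eqref{eq:continuum_aprox}. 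Writing the error with an $N$-independent exponent would produce a different (and unattainable) correction term. Your ``main obstacle'' paragraph shows you are aware of both phenomena qualitatively, but the proof needs the quantitative coupling $N\leftrightarrow\mu\leftrightarrow\epsilon$ made explicit to reach the stated bounds.
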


\begin{rmk}
Theorem \ref{thm:application} provides a stability estimate (and thus also a uniqueness result) for the linear inverse problem of recovering $f$ from the partial data $(-\D_{\dis})^{\frac{1}{2}}f$ in a regime which is ``sufficiently close to the continuum''. It can be viewed as a linear analogue of the estimates from \cite{RS20,Rue21} and a higher-dimensional, discrete analogue of the one-dimensional linear bounds from \cite{GFR20} (see also \cite{Rue17,RS20a}).
Indeed, since the global unique continuation property fails, we do not expect to be able to uniquely recover $f$ from $(-\D_{\dis})^{\frac{1}{2}}f|_{\Omega}$ for arbitrary choices of $W,\Omega$. However, under condition \eqref{eq:continuum_aprox} on the size of the data, its oscillation and the discrete scale $h_0>0$, the problem is ``sufficiently close to the continuum'': In this case the continuum stability holds up to a correction term. 

A main observation here is the \emph{uniformity} of the estimate in the parameter $h>0$. While for fixed $h>0$ only finitely many degrees of freedom are present and thus Lipschitz stability results would be available \cite{AV05, BDHQ13,RSi19,S07,AdHGS17,AS22}, these would deteriorate exponentially in the limit $h\rightarrow 0$ \cite{R06}. In order to capture the transition from the discrete to the continuum, it is thus central to obtain estimates as in Theorem \ref{thm:application} which are \emph{uniform} in $h>0$ and which allow to pass to the continuum limit.

In the continuum, the linear inverse problem under investigation would be severely ill-posed \cite{RS18} leading to an estimate as given by the first right hand side term in the bound from Theorem~\ref{thm:application}. The bound in Theorem \ref{thm:application} thus shows that this bound remains \emph{uniformly} valid in the discrete set-up and for the limit $h\rightarrow 0$ up to a correction term which is dependent on the lattice size and the data and which is encoded in the second right hand side contribution of the estimate. As the lattice size decreases, it approaches the continuum estimate \emph{exponentially} in the lattice spacing.
\end{rmk}

We remark that in particular, for $h_0\in (0,1)$ sufficiently small (depending on the size of the measurement data $\|(-\D_{\dis})^{\frac{1}{2}}f\|_{L^2(\Omega)}$ and the oscillation of $f$ measured in terms of $\|f\|_{H^1(W)}$), we obtain a stability estimate for the discrete inverse problem of recovering $f \in C_c^{\infty}(W)$ from the data $\|(-\D_{\dis})^{\frac{1}{2}}f\|_{L^2(\Omega)}$ under a priori oscillation control for $f$. In the limit $h_0 \rightarrow 0$ this matches the analogous continuous estimates (see, for instance, \cite{GFR20, RS20}). We hope that this eventually also allows one to obtain similar stability estimates for nonlinear discrete inverse problems such as the discrete fractional Calder\'on problem \cite{RS20}. We refer to \cite{E11} for such estimates for the discrete classical Calder\'on problem. It is also interesting to notice Lipschitz and logarithmic stability results for the inverse problem of recovering a potential in a semi-discrete wave equation \cite{BEO15}.

\subsection{Fractional Laplacian on the discrete torus: transference and global unique continuation}

We also explore the problem of global unique continuation for powers of the Laplacian on the discrete torus. The discrete torus on a mesh of size $h>0$ is defined as the set of points 
$$
A_{h,d}^N:=(h\Z_{2N+1})^d:=(h\{-N,\ldots,N\})^d
$$ 
and the Laplacian on the discrete torus is given by
\begin{equation}
\label{eq:deltaTorus}
\Delta_{A_{h,d}^N}u_j:=\frac{1}{h^2}\sum_{i=1}^d\big(u(h(j+e_i))-2u(hj)+u(h(j-e_i))\big), \quad j\in \{-N,\ldots, N\}^d.
\end{equation}
We will use the notation $A_h^N$ for $A_{h,1}^N$.

In Subsection \ref{subsec:transf} we will show a transference formula which allows us to obtain the pointwise formula for the fractional Laplacian on the discrete torus from the fractional Laplacian on $(h\Z)^d$ and to relate the corresponding extension problems. Such an identity and the consequences are very much in the spirit of  \cite{RS_trans}.

\begin{thm}[Pointwise formula for the discrete torus]
\label{thm:transP}
Let $v\in \ell_s(A_{h,d}^N)$ and $h=\dfrac{2\pi}{2N+1}$. Then
$$
(-\Delta_{A_{h,d}^N})^sv_j=\sum_{\substack{m\in \{-N,\ldots,N\}^d\\
m\neq j}}\big(v_j-v_m\big)K_s^{A_{h,d}^N}(j-m), \quad j\in \{-N,\ldots, N\}^d,
$$
where, for $j\in \{-N,\ldots,N\}^d$, $j\neq (0,\ldots,0)$,
$$
K_s^{A_{h,d}^N}(j)=\sum_{k\in \Z^d}K_s^h(j+k(2N+1)).
$$
In particular, for the case $d=1$, we have
$$
(-\Delta_{A_{h}^N})^sv_j=\sum_{\substack{m\in \{-N,\ldots,N\}\\
m\neq j}}\big(v_j-v_m\big)K_s^{A_{h}^N}(j-m), \quad j\in \{-N,\ldots, N\},
$$
where, for $j\in \{-N,\ldots,N\}$, $j\neq 0$,
$$
K_s^{A_{h}^N}(j)=\frac{4^{s}\Gamma(1/2+s)}{h^{2s}\sqrt{\pi}|\Gamma(-s)|}\sum_{k=-\infty}^\infty \frac{\Gamma(|j+k(2N+1)|-s)}{\Gamma(|j+k(2N+1)|+1+s)}.
$$
\end{thm}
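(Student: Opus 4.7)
The plan is a direct periodization (transference) argument. Given $v\in \ell_s(A_{h,d}^N)$, I extend it to a $(2N+1)$-periodic function $\tilde v$ on $(h\Z)^d$ by setting $\tilde v_{j+(2N+1)k}:=v_j$ for $j\in\{-N,\ldots,N\}^d$ and $k\in \Z^d$. Since $\tilde v$ is bounded it lies in $\ell_s((h\Z)^d)$, so the pointwise formula for the lattice fractional Laplacian applies. The aim is then to prove
\[
\bigl((-\Delta_{A_{h,d}^N})^s v\bigr)_j \;=\; \bigl((-\D_{\dis})^s \tilde v\bigr)_j,\qquad j\in\{-N,\ldots,N\}^d,
\]
and afterwards to regroup the pointwise formula on $(h\Z)^d$ into a sum indexed by the fundamental domain of the torus.

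For the identification, the key observation is that $\D_{\dis}$ commutes with the group of $h(2N+1)\Z^d$-shifts, so the discrete heat semigroup $e^{t\D_{\dis}}$ preserves the class of $(2N+1)$-periodic sequences. For such inputs, the restriction of $e^{t\D_{\dis}}\tilde v$ to $\{-N,\ldots,N\}^d$ satisfies the discrete heat equation on the torus with initial datum $v$, which by uniqueness for a finite-dimensional linear ODE forces it to coincide with $e^{t\Delta_{A_{h,d}^N}} v$. Plugging this identity into the Bochner/semigroup subordination representation
\[
(-\D_{\dis})^s \tilde v_j \;=\; \frac{s}{\Gamma(1-s)}\int_0^\infty \bigl(\tilde v_j - (e^{t\D_{\dis}}\tilde v)_j\bigr)\,\frac{dt}{t^{1+s}}
\]
(and the analogous one on the torus, both of which are recorded in Section \ref{sec:pre}) yields the displayed equality.

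The formula on $(h\Z)^d$ from Section \ref{sec:pre} now gives
\[
\bigl((-\D_{\dis})^s \tilde v\bigr)_j \;=\; \sum_{m'\in\Z^d,\,m'\neq j}(\tilde v_j-\tilde v_{m'})\,K_s^h(j-m').
\]
I decompose $m'\in\Z^d$ as $m'=m+(2N+1)k$ with the unique representative $m\in\{-N,\ldots,N\}^d$ and $k\in\Z^d$, and use $\tilde v_{m'}=v_m$. The terms with $m=j$ and $k\neq 0$ vanish because $\tilde v_j-\tilde v_{j+(2N+1)k}=0$, and, using the symmetry $K_s^h(-z)=K_s^h(z)$ (which follows from the tensor-product structure of the one-dimensional discrete heat kernel), the remaining contributions regroup into
\[
\sum_{\substack{m\in\{-N,\ldots,N\}^d\\ m\neq j}}(v_j-v_m)\sum_{k\in\Z^d}K_s^h(j-m+(2N+1)k)\;=\;\sum_{\substack{m\in\{-N,\ldots,N\}^d\\ m\neq j}}(v_j-v_m)\,K_s^{A_{h,d}^N}(j-m),
\]
which is the claim. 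For $d=1$, substituting the explicit closed form of $K_s^h$ from Section \ref{sec:pre} directly into $K_s^{A_h^N}(j)=\sum_{k\in\Z}K_s^h(j+k(2N+1))$ produces the stated Gamma-function expression.

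The only genuinely technical point, expected to be routine, is the justification of absolute convergence and of the regrouping of the sum over $m'\in\Z^d$ into cosets. This rests on the asymptotic decay $K_s^h(z)\lesssim |z|^{-d-2s}$ as $|z|\to\infty$ (recalled in Section \ref{sec:pre}), which both makes $\sum_{k}K_s^h(j-m+(2N+1)k)$ an absolutely convergent series defining $K_s^{A_{h,d}^N}(j-m)$, and legitimizes Fubini on the double sum since $\tilde v$ is bounded and the periodized kernel is summable in $m\in\{-N,\ldots,N\}^d$.
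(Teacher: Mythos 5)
Your proposal is correct, and the regrouping of the $m'=m+(2N+1)k$ cosets at the end coincides with the paper's decomposition (equation \eqref{eq:hj}); where you diverge is in the first half. The paper identifies $(-\Delta_{A_{h,d}^N})^s v$ with the restriction of $(-\Delta_{\dis})^s(Rv)$ by first proving a distributional transference formula (Theorem \ref{thm:transF}), which rests on the Poisson summation formula \eqref{eq:PSF}, the Fourier-coefficient identity \eqref{eq:coeff}, the key intertwining relation $(-\Delta_{A_{h,d}^N})^s(p_\Sigma\varphi)_j = p_\Sigma[(-\Delta_{\dis})^s\varphi]_j$, the Schwartz class $\mathcal{S}((h\Z)^d)$, and a surjectivity fact for $p_\Sigma$ \eqref{eq:phij}. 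You bypass all of this by observing that the lattice heat semigroup $e^{t\Delta_{\dis}}$ preserves $(2N+1)$-periodic sequences and, by uniqueness for the finite-dimensional linear ODE, its restriction to the fundamental domain equals $e^{t\Delta_{A_{h,d}^N}}$; plugging this into the Bochner subordination integral (your $\frac{s}{\Gamma(1-s)}$ normalization agrees with the paper's $\frac{1}{|\Gamma(-s)|}$ since $\Gamma(1-s)=-s\Gamma(-s)$) gives the identification pointwise, with no distributional machinery. This is genuinely simpler for the purpose at hand since the torus is finite and every object on it is already a function, whereas the paper's heavier setup pays off by also yielding Theorem \ref{thm:transF} as a standalone result (used again for the extension problem, Theorem \ref{thm:transEO}). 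Two minor bookkeeping remarks: the $d$-dimensional decay $K_s^h(m)\lesssim \|m\|_1^{-d-2s}$ that you invoke for absolute convergence is proved in Appendix A (Lemma \ref{lem:upperEstimateKernelFL}, Remark \ref{rmk:decay}), not in Section \ref{sec:pre} (which only records the $d=1$ asymptotics \eqref{eq:asymG}); and the subordination formula for $(-\Delta_{A_{h,d}^N})^s$ is written out in Appendix B rather than Section \ref{sec:pre}, though on a finite set it is of course immediate from spectral calculus.
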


As a further application we deduce the validity of the Caffarelli--Silvestre extension property on the torus:

\begin{thm}[Extension problem for the discrete torus]
\label{thm:transEO}

Let $v\in \operatorname{Dom}((-\Delta_{A_{h,d}^N})^s)$ and $h=\dfrac{2\pi}{2N+1}$. Let $V=V(hj,t)$ be the solution to the boundary problem
$$
\begin{cases}
\Delta_{A_{h,d}^N}V+\frac{1-2s}{t}V_t+V_{tt}=0, \quad &\text{ in } A_{h,d}^N\times (0,\infty),\\
V(hj,0)=v(hj), \quad &\text{ on } A_{h,d}^N. 
\end{cases}
$$
Then, for $c_s=\frac{4^{s}\Gamma(1/2+s)}{h^{2s}\sqrt{\pi}|\Gamma(-s)|}>0$, we have that 
\begin{equation}
\label{eq:DtoN}
-\lim_{t\to 0^+}t^{1-2s}V_t(hj,t)=c_s(-\Delta_{A_{h,d}^N})^sv(hj), \quad j\in \{-N,\ldots, N\}.
\end{equation}
\end{thm}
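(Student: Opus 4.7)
The natural plan is to derive the statement from the analogous extension identity on the full lattice $(h\Z)^d$ by a periodization/transference argument, in the same spirit as the proof of Theorem \ref{thm:transP}. Given $v: A_{h,d}^N \to \R$, I let $\tilde v$ denote its trivial extension to $(h\Z)^d$ by zero outside the fundamental domain $\{-N,\dots,N\}^d$, and let $\widetilde V$ be the semi-discrete Caffarelli--Silvestre extension of $\tilde v$ on $(h\Z)^d \times (0,\infty)$. On the lattice one already has
$$-\lim_{t\to 0^+} t^{1-2s}\widetilde V_t(hj,t) = c_s (-\D_{\dis})^s \tilde v(hj),$$
with precisely the same constant $c_s$ as in the present statement; this is the semi-discrete counterpart reviewed in Section \ref{sec:pre}.

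I would then periodize the extension by setting
$$W(hj,t) := \sum_{k\in\Z^d} \widetilde V\bigl(h(j+k(2N+1)),t\bigr),$$
which is $A_{h,d}^N$-periodic in the spatial variable, satisfies the weighted elliptic equation on $A_{h,d}^N\times(0,\infty)$, and has boundary value $v(hj)$ on the fundamental domain since only the $k=0$ summand contributes at $t=0$. Uniqueness for the extension problem on the finite grid $A_{h,d}^N\times(0,\infty)$ with bounded data and suitable decay at infinity then forces $W = V$. Summing the lattice Dirichlet-to-Neumann identity over the periodic shifts and invoking the kernel transference formula from Theorem \ref{thm:transP} yields
$$-\lim_{t\to 0^+} t^{1-2s} V_t(hj,t) = c_s \sum_{k\in\Z^d} \bigl[(-\D_{\dis})^s \tilde v\bigr]\bigl(h(j+k(2N+1))\bigr) = c_s (-\Delta_{A_{h,d}^N})^s v(hj),$$
which is the claim.

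The main technical point is to justify interchanging the limit $t\to 0^+$ with the periodic summation defining $W$ (and similarly the differentiation in $t$). Since the lattice kernel $K_s^h(n)$ decays like $|n|^{-(d+2s)}$ for large $|n|$ and $\widetilde V(hj,t)$ inherits this spatial decay uniformly for $t$ in compact subsets of $[0,\infty)$ through its subordinated Poisson-type representation, this should follow by dominated convergence in the same manner as in the proof of Theorem \ref{thm:transP}, where the analogous interchange is carried out for the pointwise formula. An equally viable self-contained alternative would be the finite-dimensional spectral route: diagonalise $-\Delta_{A_{h,d}^N}$ in the discrete Fourier basis with eigenvalues $\lambda_k = \tfrac{4}{h^2}\sum_{i=1}^d \sin^2(hk_i/2)$, separate variables, and reduce to the Bessel-type ODE whose bounded solution is $\psi_\lambda(t) = \tfrac{2^{1-s}}{\Gamma(s)}(\sqrt{\lambda}\, t)^s K_s(\sqrt{\lambda}\, t)$; the small-argument asymptotics of $K_s$ then produce the Dirichlet-to-Neumann constant directly, with the matching of $c_s$ reducing to the reflection and duplication formulas for $\Gamma$.
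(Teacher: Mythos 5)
Your proposal follows the same core strategy as the paper's proof --- transference of the known lattice extension theorem to the torus by periodization --- but the bookkeeping differs in two respects. The paper periodizes the \emph{datum}: it forms the repetition $Rv$ (the $A_{h,d}^N$-periodic extension of $v$ to $(h\Z)^d$), extends it via the Poisson kernel $P_t^s$, shows by rearranging the convolution that the restriction to the fundamental domain equals $v\ast_{A_{h,d}^N}(p_{\Sigma}P_t^s)$ and hence equals $V$ by uniqueness, and then derives \eqref{eq:DtoN} \emph{weakly}, testing against $p_{\Sigma}\varphi$ for $\varphi\in\mathcal{S}((h\Z)^d)$ and invoking Theorem \ref{thm:transF}; since the torus is finite, the weak identity immediately yields the pointwise one. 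You instead extend $v$ by zero, periodize the \emph{solution}, and take the Dirichlet-to-Neumann limit pointwise. Both routes work, but two points in yours deserve to be made explicit. First, the interchange of $\lim_{t\to0^+}t^{1-2s}\partial_t$ with the sum over $k\in\Z^d$ is precisely what the paper's weak formulation sidesteps; your dominated-convergence justification via the decay of the Poisson kernel is the correct fix. Second, with the zero extension $\tilde v$ the identity $\sum_{k\in\Z^d}[(-\D_{\dis})^s\tilde v]\bigl(h(j+k(2N+1))\bigr)=(-\Delta_{A_{h,d}^N})^s v(hj)$ is not literally the computation behind Theorem \ref{thm:transP}, which is carried out for the periodic extension $Rv$ where the terms with $m\equiv j$ drop out because $Rv_j-Rv_m=0$; for $\tilde v$ the shifts $k\neq 0$ produce extra contributions $-v_jK_s^h(k(2N+1))$ that must be seen to cancel against part of the term $v_j\sum_{n\neq 0}K_s^h(n)$ coming from $k=0$. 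This does work out --- it is the content of identity \eqref{eq:lapla}, $p_{\Sigma}[(-\D_{\dis})^s\tilde v]_j=(-\Delta_{A_{h,d}^N})^s(p_{\Sigma}\tilde v)_j$, established by Poisson summation in the proof of Theorem \ref{thm:transF} --- but it should be cited or verified rather than attributed to Theorem \ref{thm:transP}. Your spectral alternative, diagonalizing on the finite torus and reducing to the Bessel ODE, is a genuinely different and fully self-contained route that bypasses all of these summation issues, at the cost of having to match the normalizing constant $c_s$ by hand.
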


The pointwise formula in Theorem \ref{thm:transP}  allows us to obtain, for $d=1$, the following result concerning the failure of global unique continuation, which can be seen as an analogous result to Theorem~\ref{thm:exterior_UCP}, illustrating a similar failure of the global unique continuation property in the discrete setting.

\begin{thm}
\label{prop:UCP_torus_new}
Let $h \in \R_+$ and $N\in \N$. Let $X \subset A_h^N$ be a finite set of cardinality $M\le 2N+1$. If $M\le N$, there exists a 
non-zero bounded function $u=\{u_j\}_{j=-N}^N$ such that $u_j=0=(-\D_{A_h^N})^s u_j $ for $j\in X$. 
\end{thm}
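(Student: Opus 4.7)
The plan is to mimic the underdetermined system argument sketched after Theorem \ref{thm:exterior_UCP} but now carried out on the finite set $A_h^N$ using the explicit pointwise representation from Theorem \ref{thm:transP}. Since $A_h^N$ has exactly $2N+1$ points, a function $u=\{u_j\}_{j=-N}^N$ is determined by $2N+1$ real numbers, so the search space is finite dimensional and any candidate we produce is automatically bounded.

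The first step is to prescribe the $M$ vanishing conditions $u_j=0$ for $j\in X$, which leaves $(2N+1)-M$ free real parameters, namely the values $\{u_m\}_{m\in A_h^N\setminus X}$. Next, using the pointwise formula of Theorem \ref{thm:transP}, for every $j\in X$ we compute
\begin{align*}
(-\Delta_{A_h^N})^s u_j
&= \sum_{\substack{m=-N\\ m\ne j}}^N (u_j-u_m)K_s^{A_h^N}(j-m)
= -\sum_{m\in A_h^N\setminus X} u_m\, K_s^{A_h^N}(j-m),
\end{align*}
where the last equality uses $u_j=0$ on $X$. Requiring this to vanish for every $j\in X$ is thus equivalent to the homogeneous linear system
\begin{equation*}
\sum_{m\in A_h^N\setminus X} K_s^{A_h^N}(j-m)\, u_m = 0, \qquad j\in X,
\end{equation*}
which is a system of $M$ equations in $(2N+1)-M$ unknowns.

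Since the hypothesis $M\le N$ gives $(2N+1)-M\ge N+1>M$, the coefficient matrix is a ``wider-than-tall'' $M\times((2N+1)-M)$ matrix, hence has a non-trivial kernel. Choosing any non-zero element of this kernel and extending by $0$ on $X$ produces the desired function $u$, which is non-zero, vanishes on $X$, and satisfies $(-\Delta_{A_h^N})^s u_j=0$ for every $j\in X$; boundedness is automatic as $A_h^N$ is finite. The only genuine content beyond linear algebra is the reduction via the pointwise kernel formula of Theorem \ref{thm:transP}, which trivializes the problem; the ``hard part'' is thus purely book-keeping — verifying that the counting $(2N+1)-M>M$ is indeed equivalent to $M\le N$ and that $K_s^{A_h^N}$ is well defined and symmetric enough that the homogeneous system makes sense (both immediate from the definition given in Theorem \ref{thm:transP}).
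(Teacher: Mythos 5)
Your proposal is correct and follows essentially the same route as the paper: both reduce, via the pointwise kernel formula of Theorem \ref{thm:transP}, to a homogeneous underdetermined linear system and conclude by counting. The only cosmetic difference is that the paper restricts the support of $u$ to a set $Y$ of exactly $M+1$ points disjoint from $X$ (yielding $M$ equations in $M+1$ unknowns, possible precisely when $M+(M+1)\le 2N+1$, i.e.\ $M\le N$), whereas you keep all $(2N+1)-M$ values outside $X$ free — the same hypothesis $M\le N$ is what makes either count work.
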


\subsection{Outline of the article}

The following sections are organized as follows: After briefly recalling the definition and some of the main properties of the fractional Laplacian in Section \ref{sec:pre}, in Section \ref{sec:globalUC} we first give examples of the failure of the global UCP for the fractional discrete Laplacian. Here we also discuss how the stronger a priori information from Theorem \ref{prop:exterior_UCP1} allows us to recover the global UCP. In Section \ref{sec:wucp} we extend these examples to examples in slab domains for which the weak UCP for the fractional Schrödinger equation fails. After these illustrations of the effects of discretization, in Section \ref{sec:bdry_bulk} we show that quantitative unique continuation estimates only fail up to exponentially small (in the lattice size) corrections. It is in this section that we also discuss the linear inverse problem from Theorem \ref{thm:application} and the claimed stability estimate.
 Finally, in Section \ref{sec:torus} we discuss transference principles and the above results on the torus.

\medskip
\noindent{\textbf{Acknowledgments}.}
We are indebted to Diana Stan for helpful discussions at various stages of the project.
\smallskip

\textbf{A.\ Fern\'andez-Bertolin} was partially supported by ERCEA Advanced Grant 2014 669689 - HADE, 
project PGC2018-094528-B-I00 (AEI/FEDER, UE) and acronym “IHAIP”, and
the Basque Government through the project IT1247-19.
\textbf{L.\ Roncal} was supported by the Spanish Government through the projects SEV-2017-0718, PID2020-113156GB-100,
funded by MCIN/AEI /10.13039/501100011033 and by FSE ``invest in your future'' and RYC2018-025477-I, 
and by the Basque Government through the BERC 2018-2021 program. She also acknowledges IKERBASQUE fundings.
\textbf{A.\ R\"uland} was supported by the German Research Foundation (DFG) under Germany’s Excellence Strategy -- EXC-
2181/1-390900948 (the Heidelberg STRUCTURES Cluster of Excellence).

\section{Preliminaries}
\label{sec:pre}

Before turning to the unique continuation results from the introduction, we recall the precise definition and some of the properties of the fractional discrete Laplacian.

\subsection{Definition by means of the heat kernel and an explicit representation formula}
\label{sub:heat}
We begin by relating the fractional Laplacian and the semidiscrete heat equation: A formal solution $w_j(t)$ to the semidiscrete heat equation 
$$
\partial_tw_j= \Delta_{\dis}w_j, \,\, \text{ in } (h\Z)^d\times (0,\infty),\qquad w_j(0)=u_j, \,\, \text{ on } (h\Z)^d,
$$ 
is given by 
$$
e^{t\Delta_{\dis}}u_j=\sum_{m\in \Z^d}G\Big(j-m,\frac{t}{h^2}\Big)u_m,\quad t>0,\,\, hj\in (h\Z)^d
$$
with 
\begin{equation}
\label{eq:heatn}
G(m,t)=e^{-2dt}\prod_{i=1}^dI_{m_i}(2t), \quad m=(m_1, \ldots, m_d)\in \Z^d.
\end{equation}
Here, $I_{k}$ is the modified Bessel function of the first kind and order $m_i$, defined as 
$$
I_k(t)=\sum_{\ell=0}^\infty\frac{1}{\ell!\Gamma(\ell+k+1)}\Big(\frac{t}{2}\Big)^{2\ell+k}.
$$ 
From the very definition of $I_k$ we deduce that $I_k(t)\ge0$, for every $k\in \Z$ and $t\ge 0$. We refer to \cite[Chapter 5]{Lebedev} and \cite{OlMax} for the properties of $I_k$. Observe that $G(-m,t)=G(m,t)$, since $I_{-m_i}=I_{m_i}$ (and even $G((m_1,\ldots, -m_i,\ldots,m_d),t)=G((m_1,\ldots, m_i,\ldots,m_d),t)$, componentwise). Moreover, $\sum_{m\in \Z^d}G(m,t)=1$. 
The above facts follow from a generalization of the one dimensional case in \cite{CGRTV17}.

With this, we can define the positive fractional powers of the discrete Laplacian, namely, for $0<s<1$,
\begin{align}
\label{eq:fracn}
\notag
(-\Delta_{\dis})^{s}u_{j}&= \frac{1}{\Gamma(-s)}
\int_0^{\infty} \big(e^{t\Delta_{\dis}}u_{j} -u_j\big)\frac{dt}{t^{1+s}}\\
\notag&=\frac{1}{\Gamma(-s)}
\int_0^{\infty}\sum_{m\in \Z^d, m\neq j}G\Big(j-m,\frac{t}{h^2}\Big)\big(u_m-u_j\big)\frac{dt}{t^{1+s}}\\
&=\sum_{m\in \Z^d, m\neq j}\big(u_j-u_m\big)K_s^h(j-m),
\end{align}
 where the discrete kernel $K_s^h$ is given by 
\begin{equation}
\label{Ksh}
K_s^h(m)=\frac{1}{h^{2s}}\frac{1}{|\Gamma(-s)|}
\int_0^{\infty}G(m,t)\frac{dt}{t^{1+s}}=\frac{1}{h^{2s}}\frac{1}{|\Gamma(-s)|}
\int_0^{\infty}e^{-2dt}\prod_{i=1}^dI_{m_i}(2t)\frac{dt}{t^{1+s}},
\end{equation}
for $m\neq (0,\ldots,0)$ and $K_s^h(0,\ldots,0)=0$.
 In particular, the kernel has an even symmetry, in the sense that $K_s^h(m_1,\ldots, m_i,\ldots, m_n)=K_s^h(m_1,\ldots, -m_i,\ldots, m_n)$, for $i=1,\ldots,d$. Note also that $K_s^h(m)>0$ for $m\neq (0,\ldots,0)$. 
 
 In constructing examples for the failure of the global and weak unique continuation properties, we will rely on this representation formula.

When $d=1$, it was shown in \cite[Theorem 1.1]{CRSTV16} that the kernel $K_s^h$ has an explicit expression, namely
\begin{equation}
\label{eq:kernelKs}
K_s^h(m)= \frac{4^{s}\Gamma(1/2+s)}{\sqrt{\pi}|\Gamma(-s)|} \frac{\Gamma(|m|-s)}{h^{2s}\Gamma(|m|+1+s)}\quad \text{ for } \quad m \in \Z \setminus \{ 0\}\quad \text{ and } \quad  K_s^h(0,\ldots,0)=0.
\end{equation}
Bear in mind the well-known asymptotics for the ratio of two Gamma functions (see for instance \cite[Chapter 4, (5.05)]{Olver}): for $z\in \mathbb{R}$, $z\to \infty$, $
\frac{\Gamma(z+a)}{\Gamma(z+b)}\sim z^{a-b}$, so for $m\in \Z$,
\begin{equation}
\label{eq:asymG}
K_s^h(m)\sim C_{s}h^{-2s}|m|^{-2s-1}, \quad |m|\to \infty.
\end{equation}

\subsection{Function spaces}

In the sequel, we will consider solutions to the fractional Laplacian in different function spaces. 

Mimicking the corresponding continuum setting, we introduce the following spaces: For $0\le s \le 1$ we let
$$
\ell_{\pm s}:=\Big\{u: (h\Z)^d \rightarrow \R: \ \|u\|_{\ell_{\pm s}} := \sum\limits_{m\in \Z^d} \frac{|u_m|}{(1+|m|)^{d\pm 2s}}<\infty\Big\}.
$$
As in the continuum, these are function spaces which allow us to define the fractional Laplacian under minimal decay conditions. 

We use $\mathcal{S}((h\Z)^d)=:\mathcal{S}$ to denote the Schwartz functions on the discrete lattice $(h \Z)^d$.
The fractional discrete Laplacian on $(h\Z)^d$ does not preserve the Schwartz class, but instead $(-\D_{\dis})^s: \mathcal{S}\to \mathcal{S}_s$, where
$$
\mathcal{S}_s:=\{\varphi\in C^{\infty}((h\Z)^d): (1+|l|)^{-(d+2s)}\nabla_{\dis}^k\varphi_l\in \ell^{\infty}_{(h\Z)^d}, k\in \N_0\}
$$
where $\nabla_{\dis}$ denotes the symmetric discrete gradient and $ \ell^{\infty}_{(h\Z)^d}$ is the $L^{\infty}$ norm on the lattice $(h\Z)^d$. This can be proven analogously as in \cite[pp. 72--73]{Sil07}. The symmetry of the fractional discrete Laplacian allows us to define $(-\D_{\dis})^s$ for $u$ in the dual space $\mathcal{S}_s'$. For summable functions $u\in \mathcal{S}_s'$ we let
$$
\langle (-\D_{\dis})^su,\varphi\rangle_{\mathcal{S}_s}:=\sum_{k\in \Z^d}u_k(-\D_{\dis})^s\varphi_k, \quad \varphi\in \mathcal{S}((h\Z)^d).
$$
The sum above is absolutely convergent when $u\in \ell_s$.

In addition to these function spaces, we also use standard Sobolev spaces. We define these through the Fourier transform:
Given $u:(h\Z)^d\to \R$, its Fourier transform is a function defined on $(h^{-1}\T)^d:=[-\pi/h,\pi/h)^d$ whose Fourier coefficients are given by the sequence $\{u_k\}_{k\in \Z^d}$. In other words, if $u\in \ell^1_{(h\Z)^d}$ then we define
$$
\mathcal{F}_{(h\Z)^d}u(\xi)=\sum_{k\in \Z^d}u_ke^{-i\xi\cdot k}, \quad \xi\in 
[-\pi/h,\pi/h)^d.
$$
Now, building on this, for $u: (h\Z)^d \rightarrow \R$ and $r\in \R$ we define
\begin{align}
\label{eq:Hr}
\|u\|_{H^r((h\Z)^d)} := \|(1+|\nabla_{\dis}|^2)^{\frac{r}{2}}u\|_{L^2((h\Z)^d)} = \|\mathcal{F}_{(h\Z)^d}^{-1}(1+h^{-2}\sum_{k=1}^d|\sin(h\xi_k)|^2)^{\frac{r}{2}}\mathcal{F}_{(h\Z)^d}u\|_{L^2((h\Z)^d)},
\end{align}
where $\nabla_{\dis}$ denotes the symmetric discrete gradient and
\begin{align*}
H^{r}((h\Z)^d):= \overline{(C_c^{\infty}(h\Z)^d)}^{\|\cdot\|_{H^{r}}}.
\end{align*}

\subsection{The discrete Caffarelli--Silvestre extension}

Similarly as in the continuum setting, the fractional discrete Laplacian can also be shown to be related to a Caffarelli--Silvestre type extension.
This observation is just an application of the
general extension problem of \cite{ST10}, see also \cite{CRSTV16}.
Given $u\in\operatorname{Dom}((-\D_{\dis})^s)$, $s\in (0,1)$, the semidiscrete function $\tilde{u}$ defined as
$$
\tilde{u}_j(t)= \frac{t^{2s}}{4^s \Gamma(s)} \int_0^\infty e^{-t^2/(4z)}e^{z\D_{\dis}}u_j\,\frac{dz}{z^{1+s}}
$$
is the unique solution (weakly vanishing as $t\to\infty$) to the Dirichlet problem
\begin{align}
\label{eq:discre_frac_semi_CF}
\begin{split}
(\p_t t^{1-2s} \p_t + t^{1-2s} \D_{\dis} )\tilde{u} &= 0 \mbox{ in } (h\Z)^d \times \R_+,\\
\tilde{u} & = u \mbox{ on }  (h\Z)^d \times \{0\}.
\end{split}
\end{align}
Moreover,
$$
-\lim_{t\rightarrow 0^+}t^{1-2s}\partial_t\tilde{u}_j(t)=
-2s\lim_{t\to0^+}\frac{\tilde{u}_j(y)-\tilde{u}_j(0)}{t^{2s}}=
\frac{\Gamma(1-s)}{4^{s-1/2}\Gamma(s)}(-\D_{\dis})^su_j.
$$

In Section \ref{sec:bdry_bulk} we will use quantitative estimates for the Caffarelli--Silvestre extension for the special case $s=\frac{1}{2}$ (in which case the Caffarelli--Silvestre extension turns into the harmonic extension). To this end, we mainly rely on energy estimates and thus use the following notation: For a set $E\subset \R^{d+1}_+$ we consider
\begin{align}
\label{eq:Sob}
\|\tilde{u}\|_{H^1(E)} := \|\nabla_{\dis} \tilde{u}\|_{L^2(E)} + \|\p_t \tilde{u} \|_{L^2(E)} + \|\tilde{u}\|_{L^2(E)}.
\end{align}

Further,  we will use a number of auxiliary results for the discrete Caffarelli--Silvestre extension, which we thus briefly collect here. Most of these are proved analogously as their continuous counterparts, hence we only present the basic ideas of their proofs.

\begin{lem}
\label{lem:L2_bd}
Let $\tilde{u}$ be a solution to \eqref{eq:discre_frac_semi_CF} with $s=\frac{1}{2}$ and $u\in C_c^{\infty}((h\Z)^d)$ with $K:=\supp(u)\subset (h\Z)^d$. Then, for any compact set $E:=E_{d} \times [0,C_{d+1}]$ where $E_d \subset (h\Z)^d$ and $C_{d+1}>0$ there exists a constant $C=C(d,K, E_d, C_{d+1})>1$ such that
\begin{align*}
\|\tilde{u}\|_{L^2(E)} \leq C \|u\|_{H^1(K)}.
\end{align*} 
\end{lem}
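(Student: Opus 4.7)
The plan is to exploit that for $s=\tfrac{1}{2}$ the extension equation \eqref{eq:discre_frac_semi_CF} collapses to the semi-discrete harmonic extension $(\partial_t^2 + \Delta_{\dis})\tilde u = 0$ on $(h\Z)^d \times \R_+$, and to estimate $\tilde u$ by plain $L^2$ Fourier means. Concretely, I would take the semi-discrete Fourier transform $\mathcal{F}_{(h\Z)^d}$ in the lattice variable, which reduces the equation to the ordinary differential equation $\partial_t^2 \hat{\tilde u}(\xi,t) = m_h(\xi) \hat{\tilde u}(\xi,t)$ on $\xi \in [-\pi/h,\pi/h)^d$, where $m_h(\xi) = 4h^{-2}\sum_{k=1}^d \sin^2(h\xi_k/2) \geq 0$ is the Fourier symbol of $-\Delta_{\dis}$. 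The solution that remains bounded as $t\to\infty$ (which is the one singled out by the heat-semigroup formula in Subsection~\ref{sub:heat}) is
\begin{equation*}
\hat{\tilde u}(\xi,t) = e^{-t\sqrt{m_h(\xi)}}\,\hat u(\xi).
\end{equation*}

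Since $\sqrt{m_h(\xi)} \geq 0$, the multiplier $e^{-t\sqrt{m_h(\xi)}}$ is bounded by $1$ for every $t\geq 0$, so by Plancherel one obtains $\|\tilde u(\cdot,t)\|_{L^2((h\Z)^d)} \leq \|u\|_{L^2((h\Z)^d)}$ for all $t\geq 0$. Integrating this $L^\infty_t L^2_x$ estimate over the compact interval $[0,C_{d+1}]$ and restricting to $E_d \subset (h\Z)^d$ yields
\begin{equation*}
\|\tilde u\|_{L^2(E)}^2 \;\leq\; \int_0^{C_{d+1}} \|\tilde u(\cdot,t)\|_{L^2((h\Z)^d)}^2\, dt \;\leq\; C_{d+1}\,\|u\|_{L^2((h\Z)^d)}^2.
\end{equation*}
Finally, since $u\in C_c^\infty((h\Z)^d)$ with $\supp(u) \subset K$, we have $\|u\|_{L^2((h\Z)^d)} = \|u\|_{L^2(K)} \leq \|u\|_{H^1(K)}$ by the definition \eqref{eq:Hr} of the discrete $H^r$ norm, which completes the estimate with $C = \sqrt{C_{d+1}}$ (one may enlarge this, if desired, to track the $K$- and $E_d$-dependence permitted by the statement).

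The argument is essentially routine and I do not foresee a significant obstacle; the only mildly delicate point is the uniqueness/representation of the bounded harmonic extension on the semi-discrete half-space, but this is already encoded in the semigroup formula recalled in Subsection~\ref{sub:heat}, so no additional work is needed. Note in particular that no dependence on $h$ enters the final constant, as the Fourier multiplier bound is sharp and $h$-independent.
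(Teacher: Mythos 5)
Your proposal is correct, but it proves the lemma by a genuinely different route than the paper. The paper argues entirely in physical space: it first records the energy estimate $\|\nabla_{\dis}\tilde u\|_{L^2(E)}+\|\p_t\tilde u\|_{L^2(E)}\le C\|u\|_{H^1((h\Z)^d)}$ and then chains together several applications of the fundamental theorem of calculus (first in $t$, then in the tangential directions toward $K$, then in $t$ again) to control $\|\tilde u\|_{L^2(E)}$, which is where the dependence on $K$, $E_d$ and the full $H^1$ norm enters. You instead diagonalize the equation with $\mathcal{F}_{(h\Z)^d}$, identify the canonical extension as the Fourier multiplier $e^{-t\sqrt{m_h(\xi)}}$ applied to $u$ (which is exactly what the subordination formula in Subsection \ref{sub:heat} produces, so your appeal to it for the representation is legitimate), and conclude by Plancherel. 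This is shorter, yields the stronger bound $\|\tilde u\|_{L^2(E)}\le C_{d+1}^{1/2}\|u\|_{L^2(K)}$ with a constant independent of $K$, $E_d$ and $h$, and does not use the compact support of $u$ at all; its only limitation is that it is tied to the exact constant-coefficient extension equation and would not survive the addition of a potential or variable coefficients, whereas the paper's energy/FTC argument is of the robust type reused elsewhere in Section \ref{sec:bdry_bulk}. The one point worth making explicit in a final write-up is the measure-zero frequency $\xi=0$, where $m_h(\xi)=0$ and the bounded solution of $\p_t^2\hat{\tilde u}=0$ is the constant $\hat u(0)$; this does not affect the Plancherel bound.
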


Here the Sobolev spaces in the upper half-space are defined similarly as in \eqref{eq:Sob}, containing a mixture of standard Sobolev spaces in the normal direction and the ones from the lattice in the tangential direction, see \eqref{eq:Hr}.

\begin{proof}
By mollification arguments, we may assume that $u$ and $\tilde{u}$ are arbitrarily regular. From energy estimates we obtain that there exists $C>1$ such that
\begin{align}
\label{eq:energy}
\|\nabla_{\dis} \tilde{u}\|_{L^2(E)} + \|\p_t \tilde{u}\|_{L^2(E)} \leq C \|u\|_{H^1((h\Z)^d)}.
\end{align}
Next, by the fundamental theorem of calculus in the normal directions, we obtain that
\begin{align}
\label{eq:aux1}
\begin{split}
\|\tilde{u}\|_{L^2(E)} 
& \leq C\|\p_t \tilde{u}\|_{L^2(E_d \times [0,2C_{d+1}])} + C\|\tilde{u}\|_{L^2(E_d \times [C_{d+1},2 C_{d+1}])}.
\end{split}
\end{align}
By the fundamental theorem in tangential directions, we further obtain that
\begin{align}
\label{eq:aux2}
\|\tilde{u}\|_{L^2(E_d \times [C_{d+1},2 C_{d+1}])} \leq C\|\nabla_{\dis} \tilde{u}\|_{L^2(\text{conv}(E_d,K)\times [C_{d+1},2C_{d+1}])} + C\|\tilde{u}\|_{L^2(K \times [C_{d+1},2C_{d+1}])}.
\end{align}
Finally, applying the fundamental theorem in normal directions again, we obtain that
\begin{align}
\label{eq:aux3}
\|\tilde{u}\|_{L^2(K \times [C_{d+1},2C_{d+1}])} \leq C \|\p_t \tilde{u}\|_{L^2(K \times [0,2C_{d+1}])} + C\|u\|_{L^2(K)}.
\end{align}
Inserting \eqref{eq:aux2}-\eqref{eq:aux3} into \eqref{eq:aux1}, using \eqref{eq:energy} and the support assumption for $u$ the claim follows.
\end{proof}

Next, we recall a trace estimate (see for instance, \cite[Lemma 2.5]{CR20} for an argument in the continuum).

\begin{lem}
\label{lem:trace_aux}
Let $\tilde{u} \in \dot{H}^1((h\Z)^d \times \R_+)$ and $u:=\tilde{u}|_{(h\Z)^d \times \{0\}}$.
Let $u \in L^2(\R^d)$ and $\tilde{u}$ a solution to \eqref{eq:discre_frac_semi_CF} with $s= \frac{1}{2}$. Let $W\subset (h\Z)^d$ be an open, bounded set and let $\mu \in (0,1)$. Then,
\begin{align*}
\|u\|_{L^2(W)} \leq C \mu^{\frac{1}{2}} \|\p_t \tilde{u}\|_{L^2(W \times [0,2\mu])} + C \mu^{-\frac{1}{2}} \|\tilde{u}\|_{L^2(W \times [\mu,2\mu])}.
\end{align*} 
\end{lem}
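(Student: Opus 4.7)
The plan is to prove the lemma as a direct consequence of the fundamental theorem of calculus in the normal direction, performed pointwise at each lattice point. No property of the extension equation \eqref{eq:discre_frac_semi_CF} is used; the conclusion is really a general trace inequality for functions in $\dot{H}^1((h\Z)^d \times \R_+)$. By a standard mollification in the $t$-variable I may assume that $\tilde{u}$ is smooth enough in $t$ to justify the manipulations below; no regularization in the tangential directions is needed because the lattice is already discrete.

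Fix a lattice point $x\in W$ and a level $t\in[\mu,2\mu]$. The fundamental theorem of calculus in $t$ gives
$$
u(x) = \tilde{u}(x,t) - \int_0^t \p_s \tilde{u}(x,s)\,ds.
$$
Averaging this identity over $t\in[\mu,2\mu]$, an interval of length $\mu$, yields the representation
$$
u(x) = \frac{1}{\mu}\int_{\mu}^{2\mu}\tilde{u}(x,t)\,dt - \frac{1}{\mu}\int_{\mu}^{2\mu}\int_0^t \p_s \tilde{u}(x,s)\,ds\,dt.
$$

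Next I would apply the Cauchy--Schwarz inequality to each term. For the first term one immediately obtains the pointwise estimate $C\mu^{-1/2}\|\tilde{u}(x,\cdot)\|_{L^2([\mu,2\mu])}$. For the second term, using $t\le 2\mu$ inside the inner $s$-integral and applying Cauchy--Schwarz once in $s$ and once in $t$ produces the pointwise bound $C\mu^{1/2}\|\p_t \tilde{u}(x,\cdot)\|_{L^2([0,2\mu])}$. Squaring, multiplying by the lattice weight $h^d$, summing over $x\in W$, and taking square roots then gives
$$
\|u\|_{L^2(W)} \leq C\mu^{1/2}\|\p_t \tilde{u}\|_{L^2(W\times[0,2\mu])} + C\mu^{-1/2}\|\tilde{u}\|_{L^2(W\times[\mu,2\mu])},
$$
which is the claimed inequality. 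There is no substantive obstacle here; the only point requiring some care is the bookkeeping of the three powers of $\mu$ arising from the applications of Cauchy--Schwarz, which combine to give exactly $\mu^{\pm 1/2}$ on the two right hand side terms.
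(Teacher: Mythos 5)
Your argument is correct and is essentially the paper's own proof: the fundamental theorem of calculus in the $t$-direction, an average over $t\in[\mu,2\mu]$, and Cauchy--Schwarz, with the powers of $\mu$ combining exactly as you describe. Your observation that the extension equation plays no role and that this is a general trace inequality for $\dot{H}^1$ functions is also consistent with how the lemma is proved and used in the paper.
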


\begin{proof}
Without loss of generality, we assume that $u$ is arbitrarily regular.
The argument then follows from the fundamental theorem of calculus. Indeed, for $x\in (h\Z)^d$,
\begin{align*}
u(x,0) = \int_{0}^s \p_t \tilde{u}(x,r)dr + u(x,s).
\end{align*}
Integrating in $s\in [0,2\mu]$ we obtain
\begin{align*}
\mu |u(x,0)| \leq C \mu  \int_{0}^{2\mu} |\p_t \tilde{u}(x,r)|dr + \|u(x,\cdot)\|_{L^1([\mu,2\mu])}. 
\end{align*}
With the Cauchy-Schwarz inequality and after dividing by $\mu>0$ we obtain
\begin{align*}
 |u(x,0)| \leq C \mu^{\frac{1}{2}}  \|\p_t \tilde{u}(x,\cdot)\|_{L^2([0,2\mu])} + C \mu^{-\frac{1}{2}}\|u(x,\cdot)\|_{L^2([\mu,2\mu])}. 
\end{align*}
Squaring this expression and integrating in $x\in W$ we conclude the desired result.
\end{proof}

\subsection{Notation}

Finally, we collect some further notational conventions which will be used throughout the article.

We will use the following set notation:
\begin{itemize}
\item For some $r>0$ and $x_0 \in \R^d$, the set $B_{r}(x_0)$ we will refer to the set $\{x\in \R^d: \ |x-x_0|<r\}\cap (h\Z)^d$.
\item In the case that we are working in the upper half-plane, for some $r>0$ and $x_0 \in \R^{d+1}_+$, the set $B_{r}^+(x_0)$ we will refer to the set $\{x\in \R^{d+1}_+: \ |x-x_0|<r\}\cap ((h\Z)^d \times \R_+)$.
\item In the case that we are working in the upper half-plane, for some $r>0$ and $x_0 \in \R^{d}_+$, the set $B_{r}'(x_0)$ we will refer to the set $\{x\in \R^{d}: \ |x-x_0|<r\}\cap ((h\Z)^d \times \{0\})$.
\item For a point $(x,t)\in (h\Z)^d \times \R_+$ we use the notation $|(x,t)|:= \big(|x|^2 + t^2 \big)^{\frac{1}{2}}$ and $|x|= \big(\sum\limits_{j=1}^{d}x_j^2\big)^{\frac{1}{2}}$ to refer to its Euclidean norm.
\item We denote by $\ell^p_{(h\Z)^d}$ the $L^p$ norms on the lattice $(h\Z)^d$, for $1\le p\le \infty$.
\end{itemize}
In all these notations, we will also omit the center point if $x_0 =0$ for convenience of notation.

\section[Fractional Discrete  Unique Continuation]{Failure of Global Unique Continuation and Global UCP from the Exterior}
\label{sec:globalUC}

We begin by discussing the global unique continuation property of the fractional discrete Laplacian. On the one hand, using the nonlocal summation formula for the fractional Laplacian, we show that, in contrast to the continuous operator, the fractional discrete Laplacian does not enjoy the global unique continuation property. On the other hand, we prove that with a ``global unique continuation property from the exterior'' a remnant of the global unique continuation properties of the fractional Laplacian persists on the level of the discrete operator. 

\subsection{Failure of global unique continuation}
\label{sub:failuregUC}

We first discuss the failure of the global unique continuation property for the discrete fractional Laplacian. 

\begin{proof}[Proof of Theorem \ref{thm:exterior_UCP}]
If $j\in X$, the requirement $u_j=0$ implies
$$
(-\D_{\dis})^s u_j=\sum_{\substack{m\in \Z^d \\ m \neq j}} u_m K_s^h(j-m).
$$
On the one hand, we compute $(-\D_{\dis})^s u_j$ for all $j\in X$, which yields $M$ equations. On the other hand, we set $u_j=0$ for all $j\in \Z^d\setminus N$, where $N$ is a set of cardinality $M+1$ such that $N\cap X=\emptyset$. The requirement that $(-\D_{\dis})^su_j=0$ for all $j\in X$ gives a homogeneous system of $M$ equations and for the $M+1$ unknown values of $u_j$ for $j \in Y$. Since there are more unknowns than equations, there are infinitely many nontrivial solutions.
\end{proof}

\subsection{Global behaviour for exterior domains}
\label{sec:global}

In \cite[Corollary 5.2]{RS20a}, building on the idea from \cite[Lemma 3.5.4]{Isakov}, it was shown that a large class of nonlocal operators are \emph{antilocal from the exterior}. This was based on using the singularity (in the complex plane) of the symbol of the operator. For instance, the symbol $|\xi|^s$ of the fractional Laplacian $(-\D)^s$ has a branch cut singularity in the complex plane. A similar argument is given in \cite{RSV19} in the context of the qualitative fractional Landis conjecture. We also refer to \cite{L82, GFR20}.

In this section we prove that, in contrast to the failure of the global UCP with \emph{local} assumptions, this behaviour persists for the fractional discrete Laplacian as stated in Theorem \ref{prop:exterior_UCP1}. Notice that a similar conclusion holds under suitable decay conditions for $u$ and $(-\D_{\dis})^s u$: If we assume that $|u(x)| + |(-\D_{\dis})^s u(x)|\leq C e^{-C|x|^{1+}}$ for $|x|\geq R$, then the conclusion of Theorem~\ref{prop:exterior_UCP1} remains true. We refer to \cite{RSV19} and the argument below.

\begin{proof}[Proof of Theorem \ref{prop:exterior_UCP1}]
The proof follows the same ideas as in \cite{RS20a} and \cite{RSV19}, reducing the claim to the Paley--Wiener theorem and the presence of a branch cut in the symbol for $(-\D_{\dis})^s$. To this end, we note that the symbol of $(-\D_{\dis})^s$ is given by
\begin{align*}
h^{-2s}\Big(\sum_{j=1}^{d} \big( 2 - e^{i \xi_j h}-e^{-i \xi_j h} \big) \Big)^{s}=h^{-2s}\Big(\sum_{j=1}^d 4\sin^2(h \xi_j/2)\Big)^s .
\end{align*} 
Further, for contradiction, assume that $u \neq 0$ in $(h\Z)^d$.

By the compact support assumptions in the theorem, the functions 
$$
\F_{(h\Z)^d} u ,\quad \F_{(h\Z)^d} (-\D_{\dis})^s u: (h^{-1} \mathbb{T})^d \rightarrow \R
$$ 
are analytic (and non-trivial since $u \neq 0$ by assumption). They have analytic extensions to $([-\pi/h, \pi/h]\times \R)^d \subset \C^d$ (where we have identified $\T$ with $[-\pi,\pi]$ and $\R^2$ with $\C$). 

By assumption, we know that $u \neq 0$; thus in particular, there exists $\xi' \in \R^{d-1}$ such that $\F_{(h\Z)^d} u (\xi',\cdot) \neq 0$ as an analytic function of $\xi_d \in \R$.
For fixed $\xi' \in \R^{d-1}$ as above, we consider the symbol
\begin{align*}
p_{h,\xi'}(\xi_d):=h^{-2s}\Big(C(\xi') + 4\sin^2(h \xi_d/2) \Big)^s,
\end{align*}
where $C(\xi'):=\sum_{j=1}^{d-1} 4\sin^2(h \xi_j/2) \in \R$. We next observe that 
the function $x\mapsto \sin^2(h x)$ map $[-\pi/h, \pi/h]\times \R \subset \C$ surjectively into $ \C$. Consequently, any realization of the extension of $p_{h,\xi'}(\xi_d)$ into $[-\pi/h, \pi/h]\times \R \subset \C$ must have a branch cut. However, by the analyticity of $\F_{(h\Z)^d} u(\xi',\cdot)$ and $\F_{(h\Z)^d} (-\D_{\dis})^s u(\xi',\cdot)$ as functions of $([-\pi/h, \pi/h]\times \R) \subset \C$ we hence infer that
\begin{align*}
p_{h,\xi'}(\xi_d)\F_{(h\Z)^d} u(\xi', \xi_n) = g(\xi', \xi_d)
\end{align*}
on $([-\pi/h, \pi/h]\times \R) \subset \C$ for some analytic function $g: ([-\pi/h, \pi/h]\times \R) \subset \C\rightarrow \C$. Equivalently, we obtain that  
\begin{align*}
p_{h,\xi'}(\xi_d) = \frac{g(\xi',\xi_d)}{\F_{(h\Z)^d} u (\xi',\xi_d)}.
\end{align*}
While the right hand side of this equation is a meromorphic function (recall that by our contradiction assumption $\F_{(h\Z)^d} u (\xi', \cdot) \neq 0$), the left hand side of this equation has a branch cut, and, in particular, cannot be meromorphic. This yields a contradiction and implies that $u \equiv 0$.
\end{proof}

With a similar argument, we also obtain that the unique continuation property holds from half-spaces as formulated in Proposition \ref{prop:exterior_UCP}.

\begin{proof}[Proof of Proposition \ref{prop:exterior_UCP}]
We first note that we may assume that $u\in H^{d}((h\Z)^d)$. Indeed, this follows by convolution with a smooth convolution kernel and by the linearity of the assumed vanishing condition (possibly after a shift in the $x_d$ direction).
Since $u = 0 = (-\D_{\dis})^s u \mbox{ in } \{x\in (h\Z)^d: \ x_{d}\geq 0\}$, the functions $\F_{(h\Z)^d} u$ and $\F_{(h\Z)^d}((-\D_{\dis})^s u)$ extend as holomorphic functions in the negative complex half-plane. Moreover, by the explicit Fourier representation of the solution and the regularity and integrability of $u, (-\D_{\dis})^s u$, we obtain that the analytic continuations of $\F_{(h\Z)^d} u$ and $\F_{(h\Z)^d}((-\D_{\dis})^s u)$ into the negative complex half-plane are continuous up to the boundary $(h^{-1}\T)^d \times \{0\}$. Again, if one  does not already have that $u\equiv 0$, this yields a contradiction, since the analytic continuation of the function $|\xi|^{s}$ necessarily has a branch cut in the lower complex half-plane.
\end{proof}

\section[Counterexamples to the WUCP]{Counterexamples for the weak UCP for discrete Schrödinger equations with bounded potentials}
\label{sec:wucp}
In this section, we present with the proof of Proposition \ref{prop:slab_domain} the failure of the weak unique continuation property for the fractional discrete Laplacian in a (thin) slab domain. We emphasize that, contrary to the previous construction, we here additionally require the validity of an equation in the whole lattice.
The result of Proposition \ref{prop:slab_domain}, in particular, shows that a ``naive" version of the results from \cite{Rue15} does not longer hold in the discrete set-up.

In what follows, we reduce the result of Proposition \ref{prop:slab_domain} to the following one-dimensional auxiliary result:

\begin{lem}
\label{lem:1D_disc_Schr}
Let $s\in (0,1)$. 
Then there exist non-trivial sequences $\{u_j\}_{j\in \Z} \in \ell_s$ and $\{V_j\}_{j\in \Z} \in \ell^{\infty}_{h\Z}$ (the latter size depending on h) such that
\begin{align*}
(-\D_{\dis})^s u_j = V_j u_j \mbox{ for all } j \in \Z \mbox{ and }
u_j = 0 \mbox{ for all } j \in\{-1,0,1\}. 
\end{align*}
\end{lem}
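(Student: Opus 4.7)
The plan is to build $u$ explicitly via an antisymmetry ansatz and then define $V_j$ by division.

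First, I would impose $u_{-j}=-u_j$, which forces $u_0=0$ automatically, and additionally enforce $u_1=0$ (hence also $u_{-1}=0$). Since $K_s^h$ is even, the representation formula \eqref{eq:fracn} shows that $(-\D_{\dis})^s$ preserves antisymmetry, so $(-\D_{\dis})^s u|_0=0$ and $(-\D_{\dis})^s u|_{-1}=-(-\D_{\dis})^s u|_1$ hold automatically. Consequently, the six prescribed conditions at $\{-1,0,1\}$ collapse to the single remaining constraint $(-\D_{\dis})^s u|_1=0$. A direct calculation, using $u_1=0$, antisymmetry and evenness of $K_s^h$, yields
\begin{align*}
(-\D_{\dis})^s u|_1=-\sum_{m\geq 2}u_m\bigl[K_s^h(m-1)-K_s^h(m+1)\bigr].
\end{align*}
The weights $\Phi(m):=K_s^h(m-1)-K_s^h(m+1)$ are strictly positive on $\{m\geq 2\}$: the explicit formula \eqref{eq:kernelKs} shows that $K_s^h$ is strictly decreasing on the positive integers. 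So we are reduced to finding a nontrivial sequence $\{u_m\}_{m\geq 2}$ satisfying the single linear equation $\sum_{m\geq 2}u_m\Phi(m)=0$, with quantitative control on $u_m$.

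Second, I would construct such a sequence by taking a two-parameter family; for instance
$u_m=a\,\sign(m)|m|^{-\alpha}+b\,\psi_m$
on $|m|\geq 2$ (with $u_m=0$ on $\{-1,0,1\}$), where $\alpha\in(1,2s+1]$ and $\psi$ is a compactly supported antisymmetric perturbation (e.g.\ a suitable linear combination of $\pm\delta_{|m|,2}$ and $\pm\delta_{|m|,3}$) with $\sum_{m\geq 2}\psi_m\Phi(m)\neq 0$. Solving the single constraint determines the ratio $b/a$. The remaining freedom in $\alpha$ (and in the precise shape of $\psi$) lets us avoid the at most finitely many values of the parameters for which $u_m$ would coincidentally vanish at some $|m|\geq 2$, so we may arrange $u_m\neq 0$ for every $|m|\geq 2$. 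The membership $u\in\ell_s$ is immediate from the polynomial decay $|u_m|\lesssim |m|^{-\alpha}$.

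Third, I would define $V_j:=(-\D_{\dis})^s u_j/u_j$ for $|j|\geq 2$ and $V_j:=0$ on $\{-1,0,1\}$; by construction the Schrödinger equation holds pointwise (the $|j|\leq 1$ case reducing to $0=0$). The main obstacle, which I expect to be the technical crux of the proof, is to show that $V$ is uniformly bounded. Using the rewriting
\begin{align*}
(-\D_{\dis})^su_j=C_0u_j-\sum_{m\neq j}u_mK_s^h(j-m),\qquad C_0:=\sum_{k\neq 0}K_s^h(k)<\infty,
\end{align*}
this amounts to bounding $\sum_{m\neq j}(u_m/u_j)K_s^h(j-m)$ uniformly in $j$. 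I would split this sum according to whether $|m-j|\leq |j|/2$ (in which case $|u_m/u_j|$ is of order one and $\sum K_s^h$ contributes bounded total mass) or $|m-j|>|j|/2$ (in which case the sharp asymptotic $K_s^h(k)\sim C h^{-2s}|k|^{-2s-1}$ from \eqref{eq:asymG} together with the two-sided bound $|u_m|\asymp|m|^{-\alpha}$ delivers a contribution of order $|j|^{\alpha-2s-1}$). The delicate regime is $|m|\ll|j|$, where $u_m/u_j$ blows up like $|j|^\alpha$; the requirement $\alpha\leq 2s+1$ is exactly what balances this blow-up against the decay of $K_s^h(j-m)\sim|j|^{-2s-1}$, producing the uniform bound needed to place $V$ in $\ell^\infty_{h\Z}$.
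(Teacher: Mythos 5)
Your construction is correct, and its skeleton coincides with the paper's: an odd (antisymmetric) background profile vanishing on $\{-1,0,1\}$, for which the evenness of $K_s^h$ makes the conditions at $j=0,-1$ automatic and collapses everything to the single scalar constraint $(-\D_{\dis})^s u|_1=0$, which is then solved by adding a compactly supported odd correction with one free amplitude. The difference lies in the choice of background. The paper takes the step function $\tilde u_j=\sign(j)$ for $|j|\ge 2$ (admissible in $\ell_s$, since $\ell_s$ only requires summability against the weight $(1+|m|)^{-1-2s}$) and corrects at $j=\pm2$; it evaluates $(-\D_{\dis})^s\tilde u_1=-K_s^h(1)-K_s^h(2)$ by a telescoping sum and picks $a$ as in \eqref{eq:a}. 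Because $|u_j|$ is then bounded below by $\min\{1,|1+a|\}>0$ off $\{-1,0,1\}$, the boundedness of $V_j$ is immediate from the summability of $K_s^h$, with $\|V\|_\infty\sim h^{-2s}$. Your decaying profile $|m|^{-\alpha}$ buys nothing for the statement as such but forces the extra two-regime estimate in your third step, where the condition $\alpha\le 2s+1$ is indeed exactly what is needed to match the decay of $(-\D_{\dis})^s u_j$ against that of $u_j$; your splitting and the use of \eqref{eq:asymG} there are sound, and the "avoid coincidental zeros" step is easily made rigorous (e.g.\ with $\psi=\delta_{m,2}$ one checks directly that $u_2\ne0$). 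In short: same mechanism, but the paper's non-decaying background trivializes the $L^\infty$ bound on the potential, which is the only genuinely delicate point in your version.
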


Analogously as in Remark~\ref{rmk:wucp} it would also be desirable to extend the example from Lemma~\ref{lem:1D_disc_Schr} to arbitrarily thick sets of vanishing of $u$. Here similar issues as the ones outlined in Remark~\ref{rmk:wucp}(ii) would have to be dealt with.

\subsection{Proof of Lemma \ref{lem:1D_disc_Schr}}

Before turning to the proof of Proposition \ref{prop:slab_domain}, we discuss the auxiliary result from Lemma \ref{lem:1D_disc_Schr}.

\begin{proof}[Proof of Lemma \ref{lem:1D_disc_Schr}]
We start by considering the following auxiliary sequence 
\begin{align*}
\tilde{u}_j := 
\left\{
\begin{array}{ll}
1 &\mbox{ if } j \geq 2,\\
-1 &\mbox{ if } j \leq -2,\\
0 &\mbox{ if } j \in \{-1,0,1\}.
\end{array} \right.
\end{align*}
Then by symmetry we have that $(-\D_{\dis})^s \tilde{u}_0 = 0$. For $j\geq 2$ (and by symmetry similarly for $j\leq -2$) we have
\begin{align*}
(-\D_{\dis})^s \tilde{u}_j = \sum\limits_{m\in \Z, m \leq -2} 2 K_s^h(j-m) + \sum\limits_{m=-1}^1 K_s^h(j-m),
\end{align*}
where the kernel $K_s^h$ is as in \eqref{eq:kernelKs}.
Due to the summability of $K_s^h$ (see, for instance, \cite[Remark 1.2]{CRSTV16}), this is finite with a uniform bound independent of $j$ (but depending on $h$). In particular, for these values of $j$ we infer a bound on the quotient
\begin{align*}
\left|\frac{(-\D_{\dis})^s \tilde{u}_j}{\tilde{u}_j}\right| \leq C_h < \infty,
\end{align*}
with an $h\in \R$ dependence of the form $C_h\sim h^{-2s}$.
For $j=1$ (the case $j=-1$ is analogous by symmetry) we obtain
\begin{align}
\label{eq:u1}
(-\D_{\dis})^s \tilde{u}_1 
= \sum\limits_{m\in \Z, m \leq -2} K_s^h(1-m) - \sum\limits_{m \in \Z, m \geq 2} K_s^h(1-m) 
= -K_s^h(-1) - K_s^h(-2).
\end{align}
We seek to ``correct" this. To this end, for some $a\in \R$ we consider the sequence
\begin{equation*}
v_j := \begin{cases}
0 & \mbox{ for } j \in \Z \setminus \{\pm 2\},\\
a & \mbox{ for } j =2,\\
-a & \mbox{ for } j=-2.
\end{cases} 
\end{equation*}
Again the integrability of the kernel $K_s^h$ implies that $|(-\D_{\dis})^s v_j|$ is bounded with a bound which only depends on the choice of $a \in \R$ and $h>0$. For $j=1$ (and by symmetry similarly for $j=-1$) we have
\begin{align}
\label{eq:v1}
(-\D_{\dis})^s v_1 = -a K_s^h(1-2) + a K_s^h(1+2) = a(K_s^h(3)-K_s^h(-1)).
\end{align}
Combining \eqref{eq:u1}, \eqref{eq:v1} and choosing $a\in \R$ such that 
\begin{equation}
\label{eq:a}
a = \frac{K_s^h(-1)+K_s^h(-2)}{K_s^h(3)-K_s^h(-1)} = \frac{K_s^h(1)+K_s^h(2)}{K_s^h(3)-K_s^h(1)} ,
\end{equation}
then implies that the function $u_j=\tilde{u}_j +v_j$ satisfies the following identities and estimates
\begin{align*}
&u_j \in \ell_s, \ (-\D_{\dis})^s u_j = V_j u_j \mbox{ for all } j \in \Z,\\
&u_j = 0 \mbox{ for } j \in \{0,\pm 1\} \mbox{ and }
(-\D_{\dis})^s u_j = 0 \mbox{ for } j\in \{0,\pm 1\}.
\end{align*}
Moreover, $a\ne -1$, which ensures that $V_j:= \frac{(-\D_{\dis})^s \tilde{u}_j + (-\D_{\dis})^s v_j}{\tilde{u}_j + v_j}$ is well-defined (this quotient is set to be equal to zero if both the numerator and the denominator vanish). Since $|(-\D_{\dis})^s \tilde{u}_j + (-\D_{\dis})^s v_j|$ is bounded with a bound $C_{a,h}$ only depending on $h,a$ as $C_{a,h}\sim C_a h^{-2s}$, this yields the desired result.
\end{proof}

\subsection{Proof of Proposition \ref{prop:slab_domain}}

With Lemma \ref{lem:1D_disc_Schr} in hand, we next turn to the proof of Proposition \ref{prop:slab_domain}.

\begin{proof}[Proof of Proposition \ref{prop:slab_domain}]
We use the notation $j=(j_1,j_2)$, $j_i\in \Z$, $i=1,2$. Consider the sequence 
$$
\tilde{u}_j := 
\begin{cases}
1 &\mbox{ if } j_1\ge2,\\
-1 &\mbox{ if } j_1\le -2,\\
0& \mbox{ otherwise, } 
\end{cases}
$$
see Figure \ref{grid2d}.

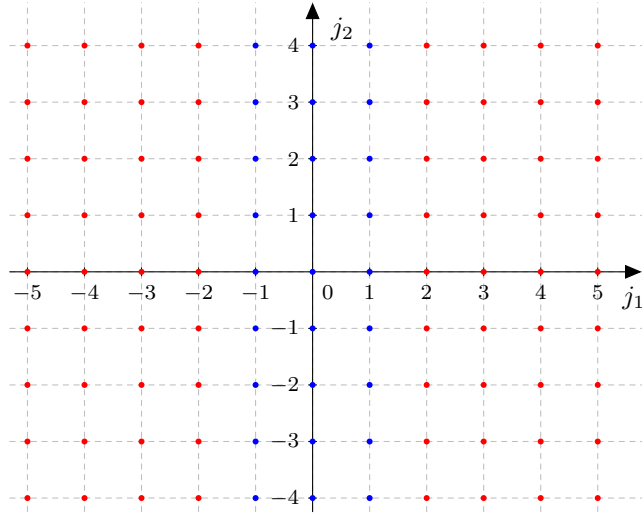
\begin{figure}
\centering
\begin{tikzpicture}[line cap=round,line join=round,>=triangle 45,x=1.0cm,y=1.0cm, scale=0.75]
\draw [color=cqcqcq,dash pattern=on 2pt off 2pt, xstep=1.0cm,ystep=1.0cm] (-5.31,-4.24) grid (5.77,4.76);
\draw[->,color=black] (-5.31,0) -- (5.77,0);
\foreach \x in {-5,-4,-3,-2,-1,1,2,3,4,5}
\draw[shift={(\x,0)},color=black] (0pt,2pt) -- (0pt,-2pt) node[below] {\footnotesize $\x$};
\draw[->,color=black] (0,-4.24) -- (0,4.76);
\foreach \y in {-4,-3,-2,-1,1,2,3,4}
\draw[shift={(0,\y)},color=black] (2pt,0pt) -- (-2pt,0pt) node[left] {\footnotesize $\y$};
\draw[color=black] (0pt,-10pt) node[right] {\footnotesize $0$};
\clip(-5.31,-4.24) rectangle (5.77,4.76);
\draw (5.27,-0.1) node[anchor=north west] {$j_1$};
\draw (0.16,4.66) node[anchor=north west] {$j_2$};
\begin{scriptsize}
\fill [color=qqqqff] (-1,0) circle (1.5pt);
\fill [color=qqqqff] (0,0) circle (1.5pt);
\fill [color=qqqqff] (1,0) circle (1.5pt);
\fill [color=qqqqff] (-1,1) circle (1.5pt);
\fill [color=qqqqff] (0,1) circle (1.5pt);
\fill [color=qqqqff] (1,1) circle (1.5pt);
\fill [color=qqqqff] (-1,2) circle (1.5pt);
\fill [color=qqqqff] (0,2) circle (1.5pt);
\fill [color=qqqqff] (1,2) circle (1.5pt);
\fill [color=qqqqff] (-1,3) circle (1.5pt);
\fill [color=qqqqff] (0,3) circle (1.5pt);
\fill [color=qqqqff] (1,3) circle (1.5pt);
\fill [color=qqqqff] (-1,-1) circle (1.5pt);
\fill [color=qqqqff] (0,-1) circle (1.5pt);
\fill [color=qqqqff] (1,-1) circle (1.5pt);
\fill [color=qqqqff] (-1,-2) circle (1.5pt);
\fill [color=qqqqff] (0,-2) circle (1.5pt);
\fill [color=qqqqff] (1,-2) circle (1.5pt);
\fill [color=qqqqff] (-1,-3) circle (1.5pt);
\fill [color=qqqqff] (0,-3) circle (1.5pt);
\fill [color=qqqqff] (1,-3) circle (1.5pt);
\fill [color=ffqqqq] (-2,3) circle (1.5pt);
\fill [color=ffqqqq] (-3,3) circle (1.5pt);
\fill [color=ffqqqq] (-4,3) circle (1.5pt);
\fill [color=ffqqqq] (-4,2) circle (1.5pt);
\fill [color=ffqqqq] (-3,2) circle (1.5pt);
\fill [color=ffqqqq] (-2,2) circle (1.5pt);
\fill [color=ffqqqq] (-2,1) circle (1.5pt);
\fill [color=ffqqqq] (-3,1) circle (1.5pt);
\fill [color=ffqqqq] (-4,1) circle (1.5pt);
\fill [color=ffqqqq] (-4,0) circle (1.5pt);
\fill [color=ffqqqq] (-3,0) circle (1.5pt);
\fill [color=ffqqqq] (-2,0) circle (1.5pt);
\fill [color=ffqqqq] (-2,-1) circle (1.5pt);
\fill [color=ffqqqq] (-3,-1) circle (1.5pt);
\fill [color=ffqqqq] (-4,-1) circle (1.5pt);
\fill [color=ffqqqq] (-4,-2) circle (1.5pt);
\fill [color=ffqqqq] (-3,-2) circle (1.5pt);
\fill [color=ffqqqq] (-2,-2) circle (1.5pt);
\fill [color=ffqqqq] (-2,-3) circle (1.5pt);
\fill [color=ffqqqq] (-3,-3) circle (1.5pt);
\fill [color=ffqqqq] (-4,-3) circle (1.5pt);
\fill [color=ffqqqq] (2,-3) circle (1.5pt);
\fill [color=ffqqqq] (3,-3) circle (1.5pt);
\fill [color=ffqqqq] (4,-3) circle (1.5pt);
\fill [color=ffqqqq] (4,-2) circle (1.5pt);
\fill [color=ffqqqq] (3,-2) circle (1.5pt);
\fill [color=ffqqqq] (2,-2) circle (1.5pt);
\fill [color=ffqqqq] (2,-1) circle (1.5pt);
\fill [color=ffqqqq] (3,-1) circle (1.5pt);
\fill [color=ffqqqq] (4,-1) circle (1.5pt);
\fill [color=ffqqqq] (4,0) circle (1.5pt);
\fill [color=ffqqqq] (3,0) circle (1.5pt);
\fill [color=ffqqqq] (2,0) circle (1.5pt);
\fill [color=ffqqqq] (2,1) circle (1.5pt);
\fill [color=ffqqqq] (3,1) circle (1.5pt);
\fill [color=ffqqqq] (4,1) circle (1.5pt);
\fill [color=ffqqqq] (4,2) circle (1.5pt);
\fill [color=ffqqqq] (3,2) circle (1.5pt);
\fill [color=ffqqqq] (2,2) circle (1.5pt);
\fill [color=ffqqqq] (2,3) circle (1.5pt);
\fill [color=ffqqqq] (3,3) circle (1.5pt);
\fill [color=ffqqqq] (4,3) circle (1.5pt);
\fill [color=qqqqff] (-1,4) circle (1.5pt);
\fill [color=qqqqff] (0,4) circle (1.5pt);
\fill [color=qqqqff] (1,4) circle (1.5pt);
\fill [color=qqqqff] (-1,-4) circle (1.5pt);
\fill [color=qqqqff] (0,-4) circle (1.5pt);
\fill [color=qqqqff] (1,-4) circle (1.5pt);
\fill [color=ffqqqq] (-4,4) circle (1.5pt);
\fill [color=ffqqqq] (-3,4) circle (1.5pt);
\fill [color=ffqqqq] (-2,4) circle (1.5pt);
\fill [color=ffqqqq] (-5,4) circle (1.5pt);
\fill [color=ffqqqq] (-5,3) circle (1.5pt);
\fill [color=ffqqqq] (-5,2) circle (1.5pt);
\fill [color=ffqqqq] (-5,1) circle (1.5pt);
\fill [color=ffqqqq] (-5,0) circle (1.5pt);
\fill [color=ffqqqq] (-5,-1) circle (1.5pt);
\fill [color=ffqqqq] (-5,-2) circle (1.5pt);
\fill [color=ffqqqq] (-5,-3) circle (1.5pt);
\fill [color=ffqqqq] (-5,-4) circle (1.5pt);
\fill [color=ffqqqq] (-4,-4) circle (1.5pt);
\fill [color=ffqqqq] (-3,-4) circle (1.5pt);
\fill [color=ffqqqq] (-2,-4) circle (1.5pt);
\fill [color=ffqqqq] (2,-4) circle (1.5pt);
\fill [color=ffqqqq] (3,-4) circle (1.5pt);
\fill [color=ffqqqq] (4,-4) circle (1.5pt);
\fill [color=ffqqqq] (5,-4) circle (1.5pt);
\fill [color=ffqqqq] (5,-3) circle (1.5pt);
\fill [color=ffqqqq] (5,-2) circle (1.5pt);
\fill [color=ffqqqq] (5,-1) circle (1.5pt);
\fill [color=ffqqqq] (5,0) circle (1.5pt);
\fill [color=ffqqqq] (5,1) circle (1.5pt);
\fill [color=ffqqqq] (5,2) circle (1.5pt);
\fill [color=ffqqqq] (5,3) circle (1.5pt);
\fill [color=ffqqqq] (5,4) circle (1.5pt);
\fill [color=ffqqqq] (4,4) circle (1.5pt);
\fill [color=ffqqqq] (3,4) circle (1.5pt);
\fill [color=ffqqqq] (2,4) circle (1.5pt);
\end{scriptsize}
\end{tikzpicture}\caption{In the blue points, $\widetilde{u}_j=0$.} \label{grid2d}
\end{figure}

From \eqref{eq:heatn} and \eqref{eq:fracn} we have that
\begin{equation}
\label{eq:Ksm2}
K_s^h(m)=\frac{1}{h^{2s}}\frac{1}{|\Gamma(-s)|}\int_0^{\infty}e^{-4t}\prod_{i=1}^2I_{m_i}(2t)\frac{dt}{t^{1+s}}.
\end{equation}
The following asymptotics are well known: there exist constants $C$, $c>0$ such that
\begin{equation}
\label{eq:asymptotics-zero}
c t^k\le I_k(t)\le C t^k \quad \text{ for } t\to0^+.
\end{equation}
Actually,
\begin{equation}
\label{eq:asymptotics-zero-ctes}
I_k(t)\sim \Big(\frac{t}{2}\Big)^k\frac{1}{\Gamma(k+1)}, \quad \text{ for a fixed } k\neq -1,-2,-3,\ldots \quad \text{ and } t\to0^+,
\end{equation}
see \cite{OlMax}.
Moreover (see \cite{Lebedev})
\begin{equation}
\label{eq:asymptotics-infinite}
I_k(t)=C e^t t^{-1/2}+ R_k(t),
\end{equation}
where
\[
|R_k(t)|\le C_k e^tt^{-3/2}, \quad \text{ for } t\to\infty.
\]
The asymptotics \eqref{eq:asymptotics-zero} and \eqref{eq:asymptotics-infinite} ensures the integrability in \eqref{eq:Ksm2}.
If $j_1=0$, then
\begin{align*}
(-\D_{\dis})^s \tilde{u}_{(0,j_2)}&=-\sum_{\substack{m\in \Z^2\\ m\neq (0,j_2)}} \tilde{u}_{(m_1,m_2)}K_s^h(-m_1,j_2-m_2)\\
&=\sum_{m_1=-\infty}^{-2}\sum_{m_2=-\infty}^{\infty}K_s^h(-m_1,j_2-m_2)-\sum_{m_1=2}^{\infty}\sum_{m_2=-\infty}^{\infty}K_s^h(-m_1,j_2-m_2)\\
&=\sum_{m_1=2}^{\infty}\sum_{m_2=-\infty}^{\infty}K_s^h(m_1,j_2-m_2)-\sum_{m_1=2}^{\infty}\sum_{m_2=-\infty}^{\infty}K_s^h(m_1,j_2-m_2)=0,
\end{align*}
where we used the symmetry of the kernel.

For $j=(j_1,j_2)$ with $j_1\ge 2$ (it works similarly for $j_1\le -2$) we get
\begin{equation}
\label{eq:sums}
(-\Delta_{\dis})^s\tilde{u}_j=\sum_{m_1=-\infty}^{-2}\sum_{m_2=-\infty}^{\infty}2K_s^h(j-m)+\sum_{m_2=-\infty}^{\infty}\sum_{m_1=-1}^{1}K_s^h(j-m).
\end{equation}
The kernel $K_s^h(m)$ can be suitably estimated by Lemma \ref{lem:upperEstimateKernelFL} (see also Remark \ref{rmk:decay}) and the sum in \eqref{eq:sums} is finite with a uniform bound independent of $j$ (but depending on $h$).

In particular, for these values of $j$ we again infer a bound on the quotient
\begin{align*}
\left|\frac{(-\D_{\dis})^s \tilde{u}_j}{\tilde{u}_j}\right| \leq C_h < \infty,
\end{align*}
which only depends on $h\in \R$ as $C_h\sim h^{-2s}$.
 
For $j=(1,j_2)$ (it works similarly for $j=(-1,j_2)$) we get
\begin{align}
\label{eq:1j2}
\notag(-\D_{\dis})^s \tilde{u}_{(1,j_2)}&=\sum_{m_1=-\infty}^{-2}\sum_{m_2=-\infty}^{\infty}K_s^h(1-m_1,j_2-m_2)-\sum_{m_1=2}^{\infty}\sum_{m_2=-\infty}^{\infty}K_s^h(1-m_1,j_2-m_2)\\
&=\sum_{m_1=2}^{\infty}\sum_{m_2=-\infty}^{\infty}\big[K_s^h(1+m_1,j_2-m_2)-K_s^h(1-m_1,j_2-m_2)\big].
\end{align}
It is known that 
\begin{equation}
\label{eq:uno}
\sum_{m\in \Z}e^{-2t}I_{m}(2t)=1.
\end{equation}
The above identity follows from taking $u=1$ and $z=2t$ in the generating function below, which is valid for $z\in \C$ and $u\in \C\setminus \{0\}$,
$$
e^{\frac12z(u+u^{-1})}=\sum_{m\in \Z}u^mI_m(z)
$$
see, for instance, \cite[formula 10.35.1]{OlMax}.
Hence, \eqref{eq:1j2} can be spelled out as 
\begin{align*}
\notag &(-\D_{\dis})^s \tilde{u}_{(1,j_2)}\\
&\quad =\sum_{m_1=2}^{\infty}\sum_{m_2=-\infty}^{\infty}\frac{1}{h^{2s}}\frac{1}{|\Gamma(-s)|}\int_0^{\infty}e^{-4t}\big(I_{1+m_1}(2t)I_{j_2-m_2}(2t)-I_{1-m_1}(2t)I_{j_2-m_2}(2t)\big)\frac{dt}{t^{1+s}}\\
&\quad =\frac{1}{h^{2s}}\frac{1}{|\Gamma(-s)|}\int_0^{\infty}e^{-2t}\sum_{m_2=-\infty}^{\infty}e^{-2t}I_{j_2-m_2}(2t)\sum_{m_1=2}^{\infty}\big(I_{1+m_1}(2t)-I_{1-m_1}(2t)\big)\frac{dt}{t^{1+s}}\\
&\quad =\frac{1}{h^{2s}}\frac{1}{|\Gamma(-s)|}\int_0^{\infty}e^{-2t}\sum_{m_2=-\infty}^{\infty}e^{-2t}I_{j_2-m_2}(2t)\big(-I_{1}(2t)-I_{2}(2t)\big)\frac{dt}{t^{1+s}}\\
&\quad=\frac{1}{h^{2s}}\frac{1}{|\Gamma(-s)|}\int_0^{\infty}e^{-2t}\big(-I_{1}(2t)-I_{2}(2t)\big)\frac{dt}{t^{1+s}}= -K_s^h(-1) - K_s^h(-2),
\end{align*}
where the last expression corresponds to the one with one-dimensional kernels. Above, the interchange of sum and integral is justified in view of the fact that $I_k$ are nonnegative functions and \eqref{eq:uno}, so that the sums involving $e^{-2t}I_{m_i}(2t)$ are absolutely convergent. Moreover, from the second to third equality we used the cancellation due to the symmetry property of Bessel functions $I_{-m_i}=I_{m_i}$,  see Subsection \ref{sub:heat}, which turns the sum in $m_1$ into a telescoping series. 
In order to correct the term above, we consider 
\begin{equation*}
v_j := \begin{cases}
a & \mbox{ for } j_1 =2,\\
-a & \mbox{ for } j_1=-2,\\
0 & \mbox{ otherwise,}
\end{cases} 
\end{equation*}
see Figure \ref{grid2dV}.
\begin{figure}
\centering
\begin{tikzpicture}[line cap=round,line join=round,>=triangle 45,x=1.0cm,y=1.0cm, scale=0.75]
\draw [color=cqcqcq,dash pattern=on 2pt off 2pt, xstep=1.0cm,ystep=1.0cm] (-5.31,-4.24) grid (5.77,4.76);
\draw[->,color=black] (-5.31,0) -- (5.77,0);
\foreach \x in {-5,-4,-3,-2,-1,1,2,3,4,5}
\draw[shift={(\x,0)},color=black] (0pt,2pt) -- (0pt,-2pt) node[below] {\footnotesize $\x$};
\draw[->,color=black] (0,-4.24) -- (0,4.76);
\foreach \y in {-4,-3,-2,-1,1,2,3,4}
\draw[shift={(0,\y)},color=black] (2pt,0pt) -- (-2pt,0pt) node[left] {\footnotesize $\y$};
\draw[color=black] (0pt,-10pt) node[right] {\footnotesize $0$};
\clip(-5.31,-4.24) rectangle (5.77,4.76);
\draw (5.27,-0.1) node[anchor=north west] {$j_1$};
\draw (0.16,4.66) node[anchor=north west] {$j_2$};
\begin{scriptsize}
\fill [color=ffqqqq] (-2,0) circle (1.5pt);
\fill [color=qqqqff] (2,0) circle (1.5pt);
\fill [color=ffqqqq] (-2,1) circle (1.5pt);
\fill [color=qqqqff] (2,1) circle (1.5pt);
\fill [color=ffqqqq] (-2,2) circle (1.5pt);
\fill [color=qqqqff] (2,2) circle (1.5pt);
\fill [color=ffqqqq] (-2,3) circle (1.5pt);
\fill [color=qqqqff] (2,3) circle (1.5pt);
\fill [color=ffqqqq] (-2,-1) circle (1.5pt);
\fill [color=qqqqff] (2,-1) circle (1.5pt);
\fill [color=ffqqqq] (-2,-2) circle (1.5pt);
\fill [color=qqqqff] (2,-2) circle (1.5pt);
\fill [color=ffqqqq] (-2,-3) circle (1.5pt);
\fill [color=qqqqff] (2,-3) circle (1.5pt);
\fill [color=ffqqqq] (-2,4) circle (1.5pt);
\fill [color=qqqqff] (2,4) circle (1.5pt);
\fill [color=ffqqqq] (-2,-4) circle (1.5pt);
\fill [color=qqqqff] (2,-4) circle (1.5pt);
\end{scriptsize}
\end{tikzpicture}
\caption{In the blue points, $v_j=a$; in the red points, $v_j=-a$; in the remaining points of the lattice, $v_j=0$.} \label{grid2dV}
\end{figure}
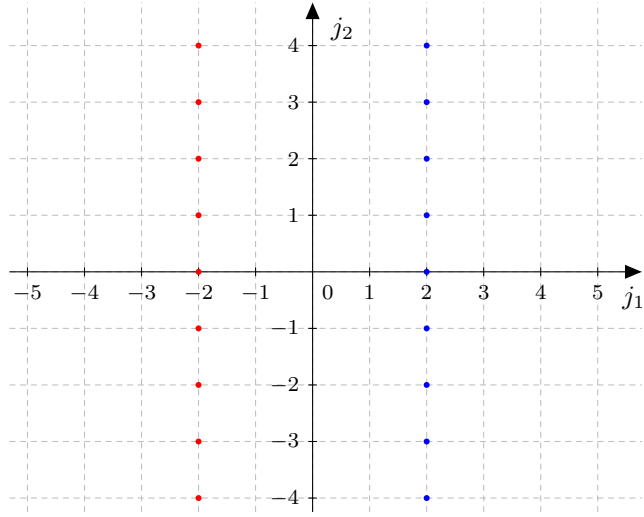
Once again, the integrability of the kernel $K_s^h$ implies that $|(-\D_{\dis})^{s}v_j|$ is bounded, with a bound which only depends on the choice of $a\in \R$ and $h>0$. Moreover, for $j_1=1$ (and by symmetry it works similarly for $j_1=-1$), we have
\begin{align*}
(-\D_{\dis})^sv_{(1,j_2)}&=-\sum_{m_2\in \Z}\big[v_{(2,m_2)}K_s^h(-1,j_2-m_2)+v_{(-2,m_2)}K_s^h(3,j_2-m_2)\big]\\
&=a\sum_{m_2\in \Z}\big[K_s^h(3,j_2-m_2)-K_s^h(-1,j_2-m_2)\big]\\
&=a\frac{1}{h^{2s}}\frac{1}{|\Gamma(-s)|}\int_0^{\infty}\sum_{m_2\in \Z}e^{-2t}I_{j_2-m_2}e^{-2t}\big(I_{3}(2t)-I_{-1}(2t)\big)\frac{dt}{t^{1+s}}\\
&=a(K_s^h(3)-K_s^h(-1)),
\end{align*}
where again the last expression corresponds to the one with one-dimensional kernels. Thus, we have reduced the construction to the one-dimensional case, and by choosing $a$ as in \eqref{eq:a} we can define the function $u_j=\tilde{u}_j+v_j$ so that 
\begin{align*}
&u_j \in \ell_s, \ (-\D_{\dis})^s u_j = V_j u_j \mbox{ for all } j \in \Z^2,\\
&u_j = 0 \mbox{ if and only if } j_1 \in \{0,\pm 1\} \mbox{ and }
(-\D_{\dis})^s u_j = 0 \mbox{ for } j_1\in \{0,\pm 1\},
\end{align*}
and $V_j:= \frac{(-\D_{\dis})^s \tilde{u}_j + (-\D_{\dis})^s v_j}{\tilde{u}_j + v_j}$ (where this quotient is set to be equal to zero if both the numerator and the denominator vanish). We obtain the desired result.
\end{proof}

\begin{rmk}
\label{rmk:nD}
The argument above can be easily extended to higher dimensions, just considering the same kind of sequences and taking into account the fact that the semidiscrete heat kernel separates the components $m_i$ in the variable $m=(m_1,\ldots, m_d)$.
\end{rmk}

\section{Boundary-Bulk Doubling Estimates for the Discrete Half-Laplacian}
\label{sec:bdry_bulk}

Using the Caffarelli--Silvestre extension, we complement the failure of the unique continuation properties for the discrete half-Laplacian by the boundary-bulk unique continuation property for the semidiscrete equation
\begin{align*}
(\p_t^2 + \D_{\dis}) \tilde{u} & = 0 \mbox{ in } (h\Z)^d \times \R_+,\\
\tilde{u} & = u \mbox{ in } (h \Z)^d \times \{0\}.
\end{align*}
More precisely, we deduce a discrete analogue of the boundary-bulk three balls inequality \eqref{eq:bdry_bulk} from Theorem \ref{thm:bdry_bulk} which is valid in the continuum setting.
Quantitative unique continuation estimates of this type are a key ingredient in the quantitative derivation of global unique continuation properties for the half-Laplacian. For the case of general $s\in (0,1)$ it played a major role in deducing stability for the fractional Calder\'on problem \cite{RS20a,Rue21}, see also \cite{GFR20,Rue17,RS20a} for related work. We hope that the estimates presented here may be of use in studying the discrete-to-continuum limits for discrete versions of the fractional Calder\'on problem.

Due to the discussed failure of the global unique continuation property for the fractional Laplacian (Theorem \ref{prop:unique}) and in analogy to the doubling inequalities for the discrete Laplacian from \cite{FBRRS20}, we do not expect an estimate of the form \eqref{eq:bdry_bulk} to be true without an additional (but very weak) $h$-dependent correction term. 

Our main result in this context thus is the estimate \eqref{eq:discrete_bdry_bulk} which provides the discrete analogue of \eqref{eq:bdry_bulk} with a mesh-dependent correction term. As in \cite{FBRRS20} we derive this estimate as a consequence of a corresponding Carleman inequality:

\begin{thm}
\label{thm:Carl_disc}
Let $u\in H^1((h \Z)^d)$, $\tilde{u}\in H^2((h\Z)^d \times \R_+)$  such that $u=\tilde{u}$ in $(h\Z)^d\times\{0\}$. Suppose further that $\supp(u) \subset B_{4/5}^+ \subset (h \Z)^d \times \R_+$.
Define the Carleman weight $\phi: (h \Z)^d \times \R_+ \rightarrow \R$ to be 
\begin{align*}
\phi(jh, t) := -|jh|^2 + c_0 \Big( \frac{1}{2} t^2-  t\Big).
\end{align*}
Then, there exist $h_0 \in (0,1/4)$, $\delta_0>0$ with $h_0<\delta_0$ and $\tau_0>1$ such that for all $h\in (0,h_0)$ and $\tau \in (\tau_0, \delta_0 h_0^{-1})$ it holds
\begin{align}
\label{eq:Carl}
\begin{split}
&\tau^{3/2}\|e^{\tau \phi} \tilde{u}\|_{L^2((h \Z)^d \times \R_+ )} + \tau^{1/2}\|e^{\tau \phi} \nabla_{\dis} \tilde{u}\|_{L^2((h \Z)^d \times \R_+)}+ \tau^{1/2}\|e^{\tau \phi} \p_t \tilde{u}\|_{L^2((h \Z)^d \times \R_+)}\\\
&\leq C (\|e^{\tau \phi} (\D_{\dis} + \p_t^2) \tilde{u}\|_{L^2((h\Z)^d \times \R_+)} + \tau^{3/2}\|e^{\tau \phi} (|u| + |\nabla_{\dis} u| + |\p_t \tilde{u}|)\|_{L^2((h\Z)^d \times \{0\})}).
\end{split}
\end{align}
The constant $C>0$ only depends on $d, c_0$.
\end{thm}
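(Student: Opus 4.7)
The plan is to adapt the classical conjugation/symmetric-antisymmetric splitting approach for Carleman estimates to the semidiscrete setting, in the spirit of the bulk Carleman estimates in \cite{FBRRS20}. Set $v := e^{\tau\phi}\tilde u$ and consider the conjugated operator $L_\phi := e^{\tau\phi}(\D_{\dis}+\p_t^2)e^{-\tau\phi}$, so that $L_\phi v = e^{\tau\phi}(\D_{\dis}+\p_t^2)\tilde u$. The continuum Leibniz rule gives $e^{\tau\phi}\p_t^2 e^{-\tau\phi} = \p_t^2 - 2\tau(\p_t\phi)\p_t + \tau^2(\p_t\phi)^2 - \tau(\p_t^2\phi)$, while for the discrete part a direct computation produces
\begin{equation*}
e^{\tau\phi_j}\D_{\dis}(e^{-\tau\phi}v)_j = \frac{1}{h^2}\sum_{i=1}^{d}\bigl(e^{\alpha_i^+(j)}v_{j+e_i} - 2v_j + e^{\alpha_i^-(j)}v_{j-e_i}\bigr),
\end{equation*}
with $\alpha_i^{\pm}(j) := -\tau(\phi_{j\pm e_i}-\phi_j) = \pm 2\tau h(jh)_i + \tau h^2$ for our explicit quadratic weight. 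Since $|\alpha_i^\pm| \lesssim \tau h$ on the relevant compact region provided $\tau h \le \delta_0$, Taylor expanding the exponentials in the small parameter $\tau h$ yields a ``continuum-type'' conjugated expression of the schematic form $\D_{\dis}v - 2\tau\nabla'\phi\cdot\nabla_{\dis}v + \tau^2|\nabla'\phi|^2 v - \tau(\D\phi)\, v$, plus a discrete remainder $R_h$ whose terms carry powers $(\tau h)^k$ with $k\ge 2$.

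\textbf{Commutator estimate and boundary terms.} Decompose the resulting $L_\phi$ as $S + A$ into its formally self-adjoint and anti-self-adjoint parts on $L^2((h\Z)^d \times \R_+)$, symmetrizing the first-order in $\nabla_{\dis}$ contributions in the standard discrete way. Expanding gives
\begin{equation*}
\|L_\phi v\|_{L^2}^2 \;=\; \|Sv\|_{L^2}^2 + \|Av\|_{L^2}^2 + \langle [S,A]v, v\rangle + \mathcal{B}\big|_{t=0},
\end{equation*}
where $\mathcal{B}|_{t=0}$ collects the boundary contributions from integration by parts in $t$ (no contribution at $t=+\infty$ thanks to the support/decay assumptions). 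For the weight $\phi$ above, the Hessians $\nabla'^2\phi = -2\Id$ and $\p_t^2\phi = c_0$ are constant, so choosing $c_0$ sufficiently large one verifies the Hörmander pseudoconvexity condition on the characteristic set of $\D + \p_t^2$ and obtains the coercive lower bound
\begin{equation*}
\langle [S,A]v, v\rangle \;\ge\; C_1\tau\bigl(\|\nabla_{\dis}v\|_{L^2}^2+\|\p_t v\|_{L^2}^2\bigr) + C_1\tau^3\|v\|_{L^2}^2 - \mathcal{B}'\big|_{t=0},
\end{equation*}
the trace contribution $\mathcal{B}'|_{t=0}$ being controlled, after undoing the conjugation, by $\tau^3\|e^{\tau\phi}u\|_{L^2(\{t=0\})}^2 + \tau\|e^{\tau\phi}\nabla_{\dis}u\|_{L^2(\{t=0\})}^2 + \tau\|e^{\tau\phi}\p_t \tilde u\|_{L^2(\{t=0\})}^2$. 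These are exactly the right-hand trace terms in \eqref{eq:Carl}.

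\textbf{Absorption of discrete errors and main obstacle.} The remainder $R_h$ from the Taylor expansion obeys bounds of the form
\begin{equation*}
\|R_h v\|_{L^2}^2 \;\le\; C(\tau h)\bigl(\tau\|\nabla_{\dis}v\|_{L^2}^2+\tau^3\|v\|_{L^2}^2\bigr) + C(\tau h)^2\bigl(\ldots\bigr),
\end{equation*}
so that if $\tau h \le \delta_0$ with $\delta_0$ sufficiently small relative to $C_1$ and $c_0$, the remainder is absorbed into the main positive commutator terms. Returning to $\tilde u = e^{-\tau\phi}v$ then translates the resulting bulk and trace bounds on $v$ into the bounds on $\tilde u$ and $u$ asserted in \eqref{eq:Carl}. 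The principal obstacle is precisely this discrete-error absorption: in the continuum, $[\D,\phi]$ is first order and the commutator expansion terminates after one step; in the discrete setting, $[\D_{\dis},\phi]$ does not terminate, and each additional order of the Taylor expansion of $e^{\tau(\phi_{j\pm e_i}-\phi_j)}$ contributes an error of size $\sim (\tau h)^k$ that must be controlled. This is the mechanism that forces the threshold $\tau \le \delta_0 h^{-1}$ and, ultimately, when one optimizes $\tau$ at this threshold in the proof of Theorem \ref{thm:boundary_bulk_discrete}, it is what produces the exponentially-in-$h^{-1}$ small correction $Ce^{-Ch^{-1}}\|\tilde u\|_{L^2(B_1^+)}$ in \eqref{eq:discrete_bdry_bulk}.
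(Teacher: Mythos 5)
Your setup---conjugating to $L_\phi=e^{\tau\phi}(\D_{\dis}+\p_t^2)e^{-\tau\phi}$, splitting into symmetric and antisymmetric parts, expanding $\|L_\phi v\|_{L^2}^2$, linearizing the discrete exponential factors under $\tau h\leq\delta_0$, and tracking the $t=0$ boundary terms---matches the paper's strategy, and your explanation of where the threshold $\tau\leq\delta_0 h^{-1}$ comes from is correct. However, there is a genuine gap at the central step. The asserted coercive lower bound
\begin{align*}
\langle [S,A]v,v\rangle \;\ge\; C_1\tau\bigl(\|\nabla_{\dis}v\|_{L^2}^2+\|\p_t v\|_{L^2}^2\bigr) + C_1\tau^3\|v\|_{L^2}^2 - \mathcal{B}'\big|_{t=0}
\end{align*}
is false for this weight, for every choice of $c_0$. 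Since $\phi$ is \emph{concave} in the tangential variables, the tangential part of the commutator is strictly negative: after linearization it equals
\begin{align*}
-32\tau^3\sum_{j\in\Z^d}|hj|^2|v_j|^2 \;-\; 8\tau\sum_{j\in\Z^d}\sum_{k=1}^{d}\Big|\frac{v_{j+e_k}-v_{j-e_k}}{2h}\Big|^2
\end{align*}
up to admissible errors. Pseudoconvexity only yields positivity of the commutator symbol on the characteristic set of the conjugated operator; it does not give a sign for the full $L^2$ pairing, so $\langle[S,A]v,v\rangle$ cannot be bounded below by a positive multiple of the tangential Dirichlet energy.

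Concretely, the negative $\tau^3$-term above can indeed be absorbed into the positive normal contribution $4\tau^3c_0^3\|(1-t)v\|_{L^2}^2$ by using the support assumption $|hj|\leq 1$ and taking $c_0$ large, as you implicitly do. But the negative tangential gradient term $-8\tau\sum|\cdot|^2$ has no positive counterpart inside the commutator and must be recovered from the \emph{square} terms in the expansion: one pairs the symmetric part against $v$, uses $-(S_\phi v,v)$ to generate the tangential Dirichlet energy plus controllable errors, and then absorbs $\tfrac12\|S_\phi v\|_{L^2}^2$ from $\|L_\phi v\|_{L^2}^2=\|S_\phi v\|^2+\|A_\phi v\|^2+\langle[S_\phi,A_\phi]v,v\rangle+(\mathrm{BC})$ via Cauchy--Schwarz. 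This compensation step is exactly what makes the argument work with a merely pseudoconvex weight (and it is Step 2 of the paper's proof); without it your chain of inequalities does not close. The remainder analysis for the discrete errors and the treatment of the boundary terms are otherwise in line with the paper.
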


Let us comment on this estimate: The Carleman estimate and the Carleman weight function from Theorem \ref{thm:Carl_disc} are in parallel to the continuous counterpart which can be used to prove the continuous boundary-bulk doubling inequality \eqref{eq:bdry_bulk} (see, for instance, \cite{JL99}). Following the continuous case, our Carleman weight has the property that it is \emph{convex} only in the normal direction while it is \emph{concave} in the tangential directions. The Carleman weight thus only enjoys pseudoconvexity and not full convexity properties. In spite of this difficulty, contrary to our arguments from \cite{FBRRS20}, the polynomial structure of the weight allows us to carry out the conjugation arguments directly also in the discrete setting. In particular, we do not have to resort to freezing arguments as in \cite{FBRRS20} in order to transfer the continuum estimates to the discrete setting. 
We refer to Remark \ref{rmk:tau} for a comment on the restriction on the size of the admissible choices of $\tau>1$ in Theorem \ref{thm:Carl_disc}.

\subsection{Proof of Theorem \ref{thm:Carl_disc}}

In this section, we present the proof of our main auxiliary tool, the Carleman estimate from Theorem \ref{thm:Carl_disc}. The argument follows the continuum strategy with a ``direct'' conjugation argument (instead of a freezing argument as in \cite{FBRRS20}). Only in estimating the contributions in the size of the semi-classical parameter $\tau\geq 1$ and in the bounds for the error contributions, linearization arguments are invoked.

\begin{proof}[Proof of Theorem \ref{thm:Carl_disc}]
The proof of Theorem \ref{thm:Carl_disc} mimics the argument of the continuous setting. In particular, the positivity of the commutator originates from the convexity of the normal contribution of the Carleman weight. 

\emph{Step 1: Conjugation.}
We begin by considering the conjugated operator:
\begin{align*}
L_{\phi} = e^{\tau \phi}(\D_{\dis} + \p_t^2) e^{-\tau \phi}.
\end{align*}
Hence, seeking to deduce the desired Carleman estimate, and, setting $v= e^{\tau \phi} \tilde{u}$, we expand as follows
\begin{align}
\begin{split}
\label{eq:expand}
&\|e^{\tau \phi} (\D_{\dis} + \p_t^2 ) \tilde{u}  \|_{L^2((h\Z)^d \times \R_+)}^2
= \|L_{\phi} v  \|_{L^2((h\Z)^d \times \R_+)}^2\\
&= \|S_{\phi} v\|_{L^2((h\Z)^d \times \R_+)}^2 + \|A_{\phi} v \|_{L^2((h\Z)^d \times \R_+)}^2 + ([S_{\phi}, A_{\phi}]v,v)_{L^2((h\Z)^d \times \R_+)} + \mbox{(BC)},
\end{split}
\end{align}
where (BC) denotes the boundary contributions. Here $S_{\phi}, A_{\phi}$ denote the symmetric and antisymmetric bulk contributions of $L_{\phi}$ in the full space setting, ignoring the boundary contributions.
By virtue of the fact that the weight $\phi$ is given as the sum of functions of each variable individually and since thus the bulk contributions in the tangential and normal directions commute, we may consider the conjugation and splitting in the symmetric and antisymmetric parts in the tangential and normal variables separately. 

In the sequel, we will use different scalar products $(\cdot,\cdot)_{L^2((h\Z)^d\times\R_+)}$, $(\cdot,\cdot)_{L^2((h\Z)^d)}$, $(\cdot,\cdot)_{L^2(\R_+)}$ and $(\cdot,\cdot)_0$. While it is clear on which space are taken the first three scalar products, the last one refers to the scalar product on $(h\Z)^d\times\{0\}$.

\emph{Step 1a: Bulk commutator in the tangential directions.}
We observe that
\begin{align*}
e^{\tau \phi}\D_{\dis} e^{-\tau \phi} = S_{\phi,d} + A_{\phi,d},
\end{align*}
where -- using the notational convention that $u_j := u(hj)$ -- we obtain
\begin{align*}
h^2 S_{\phi, d} v_{j}&:= \sum\limits_{k=1}^{d}
\left( \cosh(\tau (\phi_{j} - \phi_{j +  e_k})) v_{j + e_k} + \cosh(\tau(\phi_{j} -\phi_{j - e_k}) ) v_{j -  e_k} -2 v_{j} \right),\\
h^2 A_{\phi, d} v_{j} &:= 
 \sum\limits_{k=1}^{d}
\left( \sinh(\tau (\phi_{j} - \phi_{j +  e_k})) v_{j + e_k} + \sinh(\tau(\phi_{j} -\phi_{j - e_k}) ) v_{j -  e_k} \right).
\end{align*}
Thus, a short computation shows that the tangential commutator (see, for instance, \cite{FBV17}) turns into  
\begin{align*}
([S_{\phi,d},A_{\phi,d}]v,v)_{L^2((h\Z)^d)} 
& = -4 h^{-4} \sinh(2 \tau h^2) \sum\limits_{j \in \Z^d} \sum\limits_{k=1}^{d} \sinh^2(2\tau j_k h^2) |v_{j}|^2 \\
& \quad -4 h^{-2} \sinh(2 \tau h^2)\sum\limits_{j \in \Z^d} \sum\limits_{k=1}^{d} \Big| \frac{v_{j +  e_k} - v_{j - e_k}}{2h} \Big|^2.
\end{align*}
Now, due to the upper bound on $\tau$, we obtain that $\tau h \leq \delta_0$, which allows to linearize the hyberbolic functions. This yields that 
\begin{align*}
([S_{\phi,d},A_{\phi,d}]v,v)_{L^2((h\Z)^d)}  
& = - 32 \tau^3 \sum\limits_{j \in \Z^d} |h j|^2 |v_{j}|^2 - 8 \tau  \sum\limits_{j \in \Z^d}\sum\limits_{k=1}^{d}\Big| \frac{v_{j +  e_k} - v_{j - e_k}}{2h} \Big|^2 + E_1 + E_2, 
\end{align*}
where $E_1$ and $E_2$ are error terms satisfying the bounds from \eqref{eq:E1} and \eqref{eq:E2} below.

Indeed, using that $|\tau h|\leq \delta_0$, and that in the support of $u$ we have $\ |jh|\le 1$, we first note that
\begin{align*}
\sinh(2\tau h^2) = 2 \tau h^2 + O(\tau^3 h^6), \ 
\sinh^2(2 \tau j_k h^2) = (2 \tau j_k h^2)^2 + O(\tau^4 j_k^4 h^8).
\end{align*}
Thus, 
\begin{align*}
 h^{-4} \sinh(2 \tau h^2)  \sum\limits_{j \in \Z^d}\sum\limits_{k=1}^{d} \sinh^2(2\tau j_k h^2) |v_{j}|^2 = C \tau^3 \sum\limits_{j \in \Z^d} |h j|^2 |v_{j}|^2
 + E_{1},
\end{align*}
with 
\begin{align}
\label{eq:E1}
 |E_1| \leq C \tau^3\delta_0^2 \sum\limits_{j \in \Z^d} |v_{j}|^2.
\end{align}
Similarly, we have that 
\begin{align*}
4 h^{-2} \sinh(2 \tau h^2) \sum\limits_{j \in \Z^d} \sum\limits_{k=1}^{d}\Big| \frac{v_{j +  e_k} - v_{j - e_k}}{2h} \Big|^2
= C  \tau \sum\limits_{j \in \Z^d} \sum\limits_{k=1}^{d}\Big| \frac{v_{j +  e_k} - v_{j - e_k}}{2h}\Big|^2 + E_2 ,
\end{align*}
where
\begin{align}
\label{eq:E2}
|E_2| \leq C \tau\delta_0^2 \sum\limits_{j \in \Z^d} |v_j|^2.
\end{align}

\emph{Step 1b: Bulk commutator in normal directions.}
We next consider the corresponding contributions in the normal directions. For this we have
\begin{align*}
e^{\tau \phi} \p_t^2 e^{-\tau \phi} = \p_t^2 + \tau^2 |\p_t \phi|^2 - 2\tau \p_t - \tau \p_t^2 \phi
= S_{\phi,t} + A_{\phi,t},
\end{align*}
where
\begin{align*}
S_{\phi,t} := \p_t^2 + \tau^2 |\p_t \phi|^2 , \
A_{\phi,t} := -2\tau \p_t \phi \p_t - \tau \p_t^2 \phi.
\end{align*}
Hence, the normal commutator turns into 
\begin{align*}
([S_{\phi,t},A_{\phi,t}]v,v)_{{L^2(\R_+)}} &= 4 \tau^3 ((\p_t \phi)^2 \p_t^2 \phi v, v )_{{L^2(\R_+)}} + 4\tau (\p_t v, \p_t^2 \phi \p_t v)_{{L^2(\R_+)}}-\tau(\partial_t^4\phi v,v )_{{L^2(\R_+)}}\\
&\geq 4 \tau^3 c_0^3 ((1-t)^2 v, v)_{{L^2(\R_+)}} + 4 \tau c_0 (\p_t v, \p_t v)_{{L^2(\R_+)}}.
\end{align*}

\emph{Step 1c: Boundary contributions.}
Returning to \eqref{eq:expand}, we next seek to identify the corresponding boundary contributions. These arise when integrating the bulk terms $2(S_{\phi} v, A_{\phi}v)_{L^2((h\Z)^d \times \R_+)}$ by parts which then result in the bulk commutator. Since $v$ does not vanish on $(h \Z)^d \times \{0\}$ in general, we obtain the following contributions:
\begin{align*}
(BC) &=  2\tau ((\p_t \phi) \p_t v, \p_t v)_0 + 2\tau^3 ((\p_t \phi)^3 v, v)_0 + 2\tau (v, \p_t^2 \phi \p_t v)_0 - \tau (v, (\p_t^3 \phi) v)_0 \\
& \quad -2 (\p_t v, A_{\phi,d}v)_0 + 2 \tau (S_{\phi,d}v, (\p_t \phi) v)_0.
\end{align*}

We next seek to bound these contributions. Using the support assumption for $v$, we can estimate all the terms which do not involve the tangential operators $A_{\phi,d}$ and $S_{\phi,d}$:
\begin{align}
\label{eq:boundary_non_tangential}
\begin{split}
& 2\tau |((\p_t \phi) \p_t v, \p_t v)_0| + 2\tau^3 |( |\p_t \phi|^3 v, v)_0| + 2\tau |(v, \p_t^2 \phi \p_t v)_0| + \tau |(v, (\p_t^3 \phi) v)_0 |\\
&\leq C(1 + c_0^3)(\tau\|\p_t v\|_0^2 + \tau^3\|v\|_0^2).
\end{split}
\end{align}
It hence remains to bound the boundary contributions involving the tangential operators $A_{\phi,d}$ and $S_{\phi,d}$. We first consider the term involving $A_{\phi,d}$. To this end, we note that
\begin{align*}
|(\p_t v, A_{\phi,d}v)_0|& \leq \|\p_t v\|_0 \|A_{\phi,d}v\|_0\\
&\leq \|\p_t v\|_0 \Big(  \sum\limits_{j\in \Z^d}\sum\limits_{k=1}^{d}h^{-4}\big(|\sinh(\tau(\phi_j - \phi_{j+ e_k}))(v_{j+e_k}-v_j)|^{2} \\
& \quad  +
|(\sinh(\tau(\phi_j - \phi_{j+e_k}) ) + \sinh(\tau (\phi_j - \phi_{j-e_k}))v_j)|^{2} \\
&\quad 
+ |\sinh(\tau(\phi_j - \phi_{j- e_k}))(v_{j-e_k}-v_j)|^{2}
 \big) \Big)^{1/2}.
\end{align*}
Expanding the trigonometric functions, using the imposed smallness assumption, we can further bound these contributions 
\begin{align*}
& \Big( \sum\limits_{j\in \Z^d} \sum\limits_{k=1}^{d}h^{-4}\big(|\sinh(\tau(\phi_j - \phi_{j+ e_k}))(v_{j+e_k}-v_j)|^{2}  \\
& \quad  +  
|(\sinh(\tau(\phi_j - \phi_{j+e_k}) ) + \sinh(\tau (\phi_j - \phi_{j-e_k}))v_j)|^{2} 
+ |\sinh(\tau(\phi_j - \phi_{j- e_k}))(v_{j-e_k}-v_j)|^{2}
 \big) \Big)^{1/2}\\
& \leq C \tau \| \nabla_{\dis} v\|_{0} + C\tau^2 \| v\|_{0}, 
\end{align*} 
where, besides the support condition of $u$, we have, for instance, used that 
\begin{align*}
\sinh(\tau(\phi_j - \phi_{j + e_k})) + \sinh(\tau(\phi_j - \phi_{j-e_k}))
= -\tau h^2 \D_{\dis,k}\phi_j + O(\tau^{3} (\phi_j - \phi_{j+e_k})^{3} + \tau^{3} (\phi_j - \phi_{j-e_k})^{3} ),
\end{align*}
(where we have denoted the discrete Laplacian in direction $k$ by $\Delta_{\dis,k}$)
and that 
\begin{align*}
\sum\limits_{j\in \Z^d} h^{-4}\tau^6 (\phi_j - \phi_{j+e_k})^6 |v_j|^2
\leq C \tau^4 \| v\|_{0}^2.
\end{align*}
We argue similarly for the contribution involving $S_{\phi,d}v$: We first rewrite the symmetric tangential operator as
\begin{align}
\label{eq:symmetric}
S_{\phi,d} v_j =  \sum\limits_{k=1}^{d} \Big(h^{-2}(\cosh(\tau (\phi_j - \phi_{j+e_k}))-1)v_{j+e_k} + h^{-2}(\cosh(\tau(\phi_j - \phi_{j-e_k}))-1)v_{j-e_k} + \D_{d,k} v_j\Big).
\end{align}
Expanding the contributions involving the trigonometric functions, we obtain
\begin{align*}
\cosh(\tau (\phi_j - \phi_{j+e_k}))-1 = C \tau^2(\phi_j - \phi_{j+e_k})^2 + O(\tau^4(\phi_j - \phi_{j+e_k})^4).
\end{align*}
Inserting the explicit expression for $\phi_j$, we can further bound this by
\begin{align*}
\cosh(\tau (\phi_j - \phi_{j+e_k}))-1 \le C \tau^2(jh)^2 h^2 + O(\tau^4 (jh)^4 h^4).
\end{align*}
Hence, we infer 
\begin{align*}
2\tau |((\p_t \phi) v, S_{\phi,d}v)_0| 
&\leq C\tau^3 \||\p_t \phi|^{\frac{1}{2}}(jh) v\|^2_0 +  C\tau^3 \delta_0^2 \||\p_t \phi|^{\frac{1}{2}} (jh)^2 v\|^2_0 + C \tau \||\p_t \phi|^{\frac{1}{2}} \nabla_{\dis} v\|^2_{0}\\
& \leq C\tau^3 \| v\|^2_0 +  C\tau^3 \delta_0^2 \| v\|^2_0 + C \tau \| \nabla_{\dis} v\|^2_{0}.
\end{align*}

\emph{Step 1d: Combination of the lower bounds from conjugation.}
Combining the above arguments (and in particular also using the support assumption for $u$ and hence $v$), we obtain that
\begin{align*}
& \|S_{\phi} v\|_{L^2}^2  + \|A_{\phi} v\|_{L^2}^2 + 4 \tau^3 c_0^3 ((1-t)^2 v, v) + 4 \tau c_0(\p_t v, \p_t v) \\
& \leq \|L_{\phi} v\|_{L^2}^2  + 32 \tau^3 \int_{\R_+} \sum\limits_{j \in \Z^d} |h j|^2 |v_{j}|^2\,dt + 8 \tau \int_{\R_+} \sum\limits_{j \in \Z^d}\sum\limits_{k=1}^{d}\Big| \frac{v_{j +  e_k} - v_{j - e_k}}{2h} \Big|^2\,dt\\
& \quad + C \tau (\|\nabla_{\dis} v\|_{0} ^2+ \tau^2\|v\|_0^2 + \|\p_t v\|_0^2) + |E_{1}| + |E_2|.
\end{align*}
Here the notation $(\cdot,\cdot)$ refers to the full $L^2((h\Z)^d\times \R_+)$ scalar product.

Recalling the bounds for $E_1, E_2$ from \eqref{eq:E1}, \eqref{eq:E2}, choosing $c_0>0$ sufficiently large and recalling the support assumption $\supp(v)\subset B_{4/5}^+$, we infer that these error contributions can be absorbed into the left hand side of the inequality. As a consequence,
\begin{align}
\label{eq:first_est_Carl}
\begin{split}
& \|S_{\phi} v\|_{L^2}^2  + \|A_{\phi} v\|_{L^2}^2 + 4 \tau^3 c_0^3 ((1-t)^2 v, v) + 4 \tau c_0(\p_t v, \p_t v) \\
& \leq \|L_{\phi} v\|_{L^2}^2  + 32 \tau^3 \int_{\R_+}\sum\limits_{j \in \Z^d} |h j|^2 |v_{j}|^2\,dt + 8 \tau \int_{\R_+}\sum\limits_{j \in \Z^d}\sum\limits_{k=1}^{d}\Big| \frac{v_{j +  e_k} - v_{j - e_k}}{2h} \Big|^2\,dt\\
& \quad + C \tau (\|\nabla_{\dis} v\|_{0} ^2+ \tau^2\|v\|_0^2 + \|\p_t v\|_0^2).
\end{split}
\end{align}

We next seek to remove the two remaining bulk contributions on the right hand side of \eqref{eq:first_est_Carl} by absorbing them into the left hand side of \eqref{eq:first_est_Carl}. To this end, we first observe that by choosing $c_0>1$ sufficiently large and recalling that $\supp(u)\subset B_1^+$, it is possible to absorb the $L^2$ bulk contribution from the right hand side into the left hand side $L^2$ contribution of \eqref{eq:first_est_Carl}. Hence, if $c_0>1$ is sufficiently large, we arrive at
\begin{align}
\label{eq:Carl_aux}
\begin{split}
&\|S_{\phi} v\|_{L^2}^2  + \|A_{\phi} v\|_{L^2}^2 + 2 \tau^3 c_0^3 ((1-t)^2 v, v) + 4 \tau c_0(\p_t v, \p_t v) \\
& \leq \|L_{\phi} v\|_{L^2}^2  +  8 \tau \int_{\R_+}\sum\limits_{j \in \Z^d}\sum\limits_{k=1}^{d} \Big| \frac{v_{j +  e_k} - v_{j - e_k}}{2h} \Big|^2\,dt
 + C \tau (\|\nabla_{\dis} v\|_{0} ^2+ \tau^2 \|v\|_0^2 + \|\p_t v\|_0^2).
\end{split}
\end{align}
In the next step, we discuss how the remaining bulk contribution can be removed from the right hand side.

\emph{Step 2: Compensation of the negative tangential contributions.}
We discuss how to bound the contribution
\begin{align*}
8 \tau \int_{\R_+}\sum\limits_{j \in \Z^d}\sum\limits_{k=1}^{d} \Big| \frac{v_{j +  e_k} - v_{j - e_k}}{2h} \Big|^2\,dt
\end{align*}
on the right hand side of \eqref{eq:Carl_aux}. To this end, we again recall the representation \eqref{eq:symmetric}. From this we obtain that
\begin{align}
\label{eq:control_neg_der}
\begin{split}
-(S_{\phi} v, v) & = \int_{\R_+} \sum\limits_{j \in \Z^d}\sum\limits_{k=1}^{d}\Big| \frac{v_{j +  e_k} - v_{j - e_k}}{2h} \Big|^2\,dt 
- \int_{\R_+}\sum\limits_{j \in \Z^d} \sum\limits_{k=1}^{d} \Big[ h^{-2}((\cosh(\tau (\phi_j - \phi_{j+e_k}))-1)v_{j+e_k},v_j) \\
& \quad + h^{-2}((\cosh(\tau(\phi_j - \phi_{j-e_k}))-1)v_{j-e_k},v_j) \Big]\,dt + (\p_t v, \p_t v) - \tau^2(|\p_t \phi|^2 v, v ).
\end{split}
\end{align}
Again using the smallness condition $\tau \leq \delta_0 h^{-1}_0$ and the support condition for $v$, we hence deduce that
\begin{align}
\label{eq:symmetric_term}
-8 \tau (S_{\phi} v, v) = 8 \tau\int_{\R_+} \sum\limits_{j \in \Z^d} \sum\limits_{k=1}^{d}\Big| \frac{v_{j +  e_k} - v_{j - e_k}}{2h} \Big|^2\,dt 
+ 8 \tau (\p_t v, \p_t v) + E_{3} .
\end{align}
with $|E_3|\leq C\tau^3 c_0^2 (v,v)$ for some sufficiently large constant $C>1$ (independent of $c_0,\tau$). Indeed, in order to infer this error bound, it suffices to expand the contributions in \eqref{eq:control_neg_der} involving the $\cosh$ contributions. To this end, we observe that
\begin{align*}
\cosh(\tau(\phi_j - \phi_{j+e_k}))-1 = C \tau^2 (\phi_j - \phi_{j+e_k})^2 + O(\tau^4(\phi_j - \phi_{j+ e_k})^4).
\end{align*}
Inserting the definition of $\phi_j$, using the smallness condition $\tau h_0 \leq \delta_0$ and the support condition for $v$, this implies that 
\begin{align*}
& \tau \int_{\R_+}\sum\limits_{j \in \Z^d} \sum\limits_{k=1}^{d} \left[ h^{-2}((\cosh(\tau (\phi_j - \phi_{j+e_k}))-1)v_{j+e_k},v_j) \right.\\
& \quad \left. + h^{-2}((\cosh(\tau(\phi_j - \phi_{j-e_k}))-1)v_{j-e_k},v_j) \right]\,dt
\leq C \tau^3 \| v\|^2_{L^2}.
\end{align*}
Thus, for $c_0>1$ sufficiently large, the bound from \eqref{eq:symmetric_term} yields
\begin{align}
\label{eq:grad_bound}
\begin{split}
8 \tau \int_{\R_+}\sum\limits_{j \in \Z^d}\sum\limits_{k=1}^{d} \Big| \frac{v_{j +  e_k} - v_{j - e_k}}{2h} \Big|^2\,dt  + 8 \tau \|\p_t v\|^2_{L^2}
&\leq 8 \|S_{\phi}v\|_{L^2} \|v\|_{L^2} + C \tau^3 c_0^2 \|v\|^2_{L^2}\\
&\leq \frac{1}{2}\|S_{\phi}v\|^2_{L^2} + 32 \|v\|^2_{L^2} +  C\tau^3 c_0^2 \|v\|_{L^2}^2.
\end{split}
\end{align}
Returning to the Carleman estimate from \eqref{eq:Carl_aux} and inserting the bound \eqref{eq:grad_bound} for the gradient contribution, we hence deduce that 
\begin{align}
\label{eq:Carl_aux0a}
\begin{split}
&\|S_{\phi} v\|_{L^2}^2  + \|A_{\phi} v\|_{L^2}^2 + 2 \tau^3 c_0^3 ((1-t)^2 v, v) + 4 \tau c_0(\p_t v, \p_t v) + \tau \|\nabla_{\dis} v\|_{L^2}^2 \\
& \leq \|L_{\phi} v\|_{L^2}^2  + \frac{3}{4}\|S_\phi v\|_{L^2}^2 + 64 \|v\|_{L^2}^2 +  C \tau^3 c_0^2 \|v\|_{L^2}^2
 + C \tau (\|\nabla_{\dis} v\|_{0} ^2+ \tau^2 \|v\|_0^2 + \|\p_t v\|_0^2).
\end{split}
\end{align}
Now, using that for $c_0>1$, $\tau \geq \tau_0>1$ sufficiently large the new bulk terms on the right hand side of \eqref{eq:Carl_aux0a} can be absorbed into the left hand side, we obtain that for  $c_0>1$, $\tau \geq \tau_0>1$ and some $C>1$ (independent of $\tau, h$)
\begin{align}
\label{eq:Carl_aux1}
\begin{split}
&\|S_{\phi} v\|_{L^2}^2  + \|A_{\phi} v\|_{L^2}^2 +  2\tau^3 c_0^3 \|(1-t) v\|_{L^2}^2 + 4 \tau \|\p_t v \|_{L^2}^2  + \tau \|\nabla_{\dis} v\|_{L^2}^2\\
& \leq C\|L_{\phi} v\|_{L^2}^2  + C \tau (\|\nabla_{\dis} v\|_{0} ^2+ \tau^2 \|v\|_0^2 + \|\p_t v\|_0^2).
\end{split}
\end{align}
In order to conclude the argument, we finally return from the function $v$ to the original function $\tilde{u}= e^{-\tau \phi}v$. To this end, we observe that
\begin{align*}
\|\p_t v\|_{L^2}^2 &\geq \|e^{\tau \phi}\p_t \tilde  u\|_{L^2}^2  - 2\tau^2\|(\p_t \phi) \tilde u\|_{L^2}^2
\geq \|e^{\tau \phi}\p_t \tilde u\|_{L^2}^2  - 2c_0^2 \tau^2 \| \tilde  u\|_{L^2}^2,\\
\|\nabla_{\dis} v\|_{L^2}^2 & \geq \|e^{\tau \phi} \nabla_{\dis} \tilde  u\|_{L^2}^2 - 2\|(\nabla_{\dis} e^{\tau\phi }) \tilde  u\|_{L^2}^2
 \geq \|e^{\tau \phi} \nabla_{\dis} \tilde  u\|_{L^2}^2 - C\tau^2 \|e^{\tau \phi}|\nabla_{\dis} \phi| \tilde  u\|_{L^2}^2. 
\end{align*}
For $c_0>1$ sufficiently large, all of these contributions can be absorbed into the bulk $L^2$ term on the left hand side of \eqref{eq:Carl_aux1}. Arguing similarly for the boundary contributions, we obtain the desired result
\begin{align*}
&\tau^3 c_0^3 \| e^{\tau \phi} \tilde  u\|_{L^2}^2 + \tau \| e^{\tau \phi} \p_t \tilde  u\|^2 + \tau \|e^{\tau \phi}\nabla_{\dis} \tilde  u\|_{L^2}^2\\
& \leq C\|e^{\tau \phi} \D_{\dis} \tilde  u \|_{L^2}^2  + C \tau (\|e^{\tau \phi} \nabla_{\dis} u\|_{0} ^2+ \tau^2 \|e^{\tau \phi} u\|_0^2 + \|e^{\tau \phi} \p_t \tilde  u\|_0^2),
\end{align*}
which concludes the argument for the Carleman inequality from Theorem \ref{thm:Carl_disc}.
\end{proof}

\begin{rmk}
\label{rmk:tau}
Considering the conjugated operator and its symbol explains the restriction $\tau h\leq \delta_0$: Indeed, this exactly corresponds to the regime in which (by linearization) the conjugated operator is a linear perturbation of the continuum one (in a precise sense) which allows for the application of the explained methods. We also refer to \cite{BHLR10a} for similar constraints in the context of control theory of (semi-)discrete equations.
\end{rmk}

\subsection{Proof of Theorem \ref{thm:boundary_bulk_discrete}}

Using the Carleman estimate from Theorem \ref{thm:Carl_disc}, we next present the proof of our main boundary-bulk doubling inequality from Theorem \ref{thm:boundary_bulk_discrete}:

\begin{proof}[Proof of Theorem \ref{thm:boundary_bulk_discrete}]
The proof of Theorem \ref{thm:boundary_bulk_discrete} follows from Theorem \ref{thm:Carl_disc} by standard cut-off arguments using that the Carleman weight is largest at $t=0$. 
The error term arises from the fact that we can only consider semi-classical parameters $\tau \leq \delta_0 h_0^{-1}$.

We discuss the details of this.\\

\emph{Step 1: A cut-off argument.}
We begin by noting that for $t \in (0,1)$ the function $\phi$ as a function of $x\in (h\Z)^d$ and $t$ is monotone decreasing with level sets of finite length in the upper half-plane.
We next define $\bar{r}_0:= \max\{r\in (0,2/3): 2\varphi_{-}(\bar{r}_0)> \varphi_+(2/3)\}>0$. 
Here $\varphi_{-}(r) = \min\{\phi(x,t): |(x,t)|=r, \ (x,t)\in (h\Z)^d \times \R_+ \}$ and $\varphi_{+}(r) = \max\{\phi(x,t): |(x,t)|=r, \ (x,t)\in (h\Z)^d \times \R_+ \}$. 
Such a number exists due to the fact that $\phi(0,0)=0$, the monotonic decay of the weight function for $|(x,t)|\leq 1$, and the structure of its level sets.

Next we consider $w:= \eta \tilde{u}$, where $\eta$ is a cut-off function supported in $B_{3/4}^+$ which is equal to one on $B_{7/10}^+$,  and satisfies uniform bounds for its gradient and second derivatives. Moreover, we may assume that $\p_t \eta|_{(h\Z)^d \times \{0\}} = 0$.
Using the equation for $\tilde{u}$, we obtain that $w$ satisfies a similar equation but now involves a bulk inhomogeneity $f$. More precisely, we have that 
\begin{align*}
(-\p_t^2 - \D_{\dis}) w &=Vw+ f  \mbox{ in } (h \Z)^d \times \R_+,
\end{align*}
where 
\[
f(jh,t)=  -2 \p_t \eta \p_t \tilde{u} -  \p_t^2 \eta \tilde{u} - 2 \nabla_{\dis} \eta \cdot \nabla_{\dis} \tilde{u} - h^{-2}\sum_{k=1}^d\Big(\frac{u_{j+e_k}+u_{j-e_k}}{2}(\eta_{j+e_k}+\eta_{j-e_k}-2\eta_j)\Big).
\]
We observe that all contributions of this error term may be assumed to be supported in $B_{4/5}^+\setminus B_{2/3}^+$ by choosing the support of $\eta$ appropriately.
Inserting this into \eqref{eq:Carl}, we obtain
\begin{align}
\label{eq:Cut_Carl}
\begin{split}
&\tau^{3/2}\|e^{\tau \phi} w\|_{L^2((h \Z)^d \times \R_+ )} + \tau^{\frac{1}{2}}\|e^{\tau \phi} \nabla_{\dis} w\|_{L^2((h \Z)^d \times \R_+)}\\
&\leq C \tau^{3/2}(\|e^{\tau \phi} V w\|_{L^2((h\Z)^d \times \R_+)} + \|e^{\tau \phi} f\|_{L^2((h\Z)^d \times \R_+)}  \\
& \quad  + \tau^{\frac{3}{2}}\|e^{\tau \phi} (|w| + |\nabla_{\dis} w| + |\p_t w|)\|_{L^2((h\Z)^d \times \{0\})}).
\end{split}
\end{align}
Next, we observe that $\|e^{\tau \phi} V w\|_{L^2((h\Z)^d \times \R_+)} \leq C \|V\|_{L^{\infty}}\|e^{\tau \phi} w\|_{L^2((h\Z)^d \times \R_+)}$. For $\tau^{3/2}\geq 2C \|V\|_{L^{\infty}}$, we may hence absorb this term involving the bulk potential into the left hand side of the estimate \eqref{eq:Cut_Carl}.

Due to the support assumption for $\eta$ and the monotonicity of the weight (it decays with $|(x,t)|$ in a neighbourhood of zero), we may thus further bound this as follows 
\begin{align*}
&\tau^{3/2}e^{\tau \varphi_-(\bar{r}_0)} \| w\|_{L^2(B_{\bar{r}_0}^+ )} + \tau^{\frac{1}{2}}e^{\tau \varphi_{-}(\bar{r}_0)}\| \nabla_{\dis} w\|_{L^2(B_{\bar{r}_0}^+)}\\
&\leq C \tau^{3/2}(e^{\tau \varphi_{+}(2/3)}\| f\|_{L^2((h\Z)^d \times \R_+)} +  \| |w| + |\nabla_{\dis} w| + |\p_t w|\|_{L^2((h\Z)^d \times \{0\})}),
\end{align*}
where $\varphi_{-}(r)$ and $\varphi_+(r)$ are as above. By virtue of the structure of the weight, for $c_0>1$ sufficiently large, we have that, by definition, $0>\varphi_-(\bar{r}_0)>\varphi_{+}(2/3)$.  
Using the support conditions encoded in the cut-off function $\eta$ together with Caccioppoli's inequality (see for instance \cite[Lemma 4.1]{FBRRS20}) and the definition of $f$, we then further deduce that
\begin{align*}
& \| \tilde{u}\|_{L^2(B_{\bar{r}_0}^+)}
\leq C (e^{\tau (\varphi_{+}(2/3)- \varphi_-(\bar{r}_0))} \| \tilde{u} \|_{L^2(B_{1}^+)} +  e^{-\tau  \varphi_-(\bar{r}_0)} \| |u| + |\nabla_{\dis} u| + |\p_t \tilde{u}|\|_{L^2(B_1')}).
\end{align*}
We recall that by construction
\begin{align*}
\varphi_{+}(2/3)- \varphi_-(\bar{r}_0) \leq 0 \mbox{ and } - \varphi_-(\bar{r}_0) > 0. 
\end{align*}
Based on this and the restrictions on the size of $\tau$, we consider two cases: 
\begin{itemize}
\item If $\max\{\| \tilde{u} \|_{L^2(B_{1}^+)} ,\| |u| + |\nabla_{\dis} u| + |\p_t \tilde{u}|\|_{L^2(B_1')} \} = \| \tilde{u} \|_{L^2(B_{1}^+)}$, then we choose 
\begin{align*}
\tau = \min\Big\{\delta_0 h^{-1}, \frac{1}{-\varphi_{+}(2/3)}\log\Big(\frac{\| \tilde{u} \|_{L^2(B_{1}^+)} }{\| |u| + |\nabla_{\dis} u| + |\p_t \tilde{u}|\|_{L^2(B_1')}}\Big)\Big\}.
\end{align*}
We then obtain that
\begin{align*}
\| \tilde{u}\|_{L^2(B_{\bar{r}_0}^+)}
& \leq C e^{-\delta_0 h^{-1} |\varphi_+(2/3)- \varphi_-(\bar{r}_0)|} \| \tilde{u} \|_{L^2(B_{1}^+)} \\
& \quad + C \|\tilde{u}\|_{L^2(B_1^+)}^{\alpha} \left( \| |u| + |\nabla_{\dis} u| + |\p_t \tilde{u}| \|_{L^2(B_1')} \right)^{1-\alpha},
\end{align*} 
where $\alpha = \frac{\varphi_-(\bar{r}_0)}{\varphi_{+}(2/3)} \in (0,1)$.
\item If $\max\{\| \tilde{u} \|_{L^2(B_{1}^+)} ,\| |u| + |\nabla_{\dis} u| + |\p_t \tilde{u}|\|_{L^2(B_1')} \} > \| \tilde{u} \|_{L^2(B_{1}^+)}$, we then directly choose $\tau = \tau_0> \max\{1,\|V\|^{2/3}_{L^{\infty}}\}$ and bound the expression by
\begin{align*}
\| \tilde{u}\|_{L^2(B_{\bar{r}_0}^+)}
& \leq C\| |u| + |\nabla_{\dis} u| + |\p_t \tilde{u}| \|_{L^2(B_1')},
\end{align*} 
where $C >1$ is a constant depending on $\|V\|_{L^{\infty}}$.
\end{itemize}
This completes the proof of Theorem \ref{thm:boundary_bulk_discrete}.
\end{proof}

\subsection{A stability estimate for a linear inverse problem}
\label{sec:stab_est}

We deduce the estimate of Theorem~\ref{thm:application} as a consequence of a propagation of a smallness argument. This in turn relies on the doubling inequality from \cite{FBRRS20}, the result of Theorem \ref{thm:bdry_bulk} together with Caccioppoli's inequality and trace estimates.

\begin{proof}[Proof of Theorem \ref{thm:application}]
We denote by $\tilde{u}$ the semi-discrete Caffarelli-Silvestre extension of $f$.
With this, for $E= \Omega \times [0,C]$ and some large constant $C>1$ to be chosen below but fixed, we first invoke Lemma \ref{lem:L2_bd} in order to obtain the following estimate: $\|\tilde{u}\|_{L^2(E)} \leq C \|f\|_{H^1(W)}$.

Next, we argue similarly as in \cite{Rue21} and propagate information through the upper half plane. To this end, we consider a chain of balls in the upper half-plane connecting $W$ and $\Omega$. More precisely, let $x_0 \in \Omega \subset (h\Z)^d \times \{0\}$, $r_0= \bar{r}_0$ and let $(x_j)_{j\in \{1,\dots,N\}}$ and $(r_j)_{j\in \{1,\dots,N\}}$ denote points and radii such that 
\begin{itemize}
\item $x_j \in B_{r_{j-1}}^+(x_{j-1})$ and $B_{r_{j}/2}^+(x_j)\subset B_{r_{j-1}}^+(x_{j-1})$, where this denotes balls which are fully contained in the upper half plane if $j>1$,
\item for $j\in\{1,\dots,N\}$ the balls $B_{5 r_{j}}(x_j)$ are all contained in the upper half plane and only finitely many of them have overlap,
\item the balls cover $W \times [\mu,1]$ for $\mu \in (0,1)$ to be determined.
\end{itemize}
Without loss of generality, by scaling we assume that $B'_{1}(x_0)\subset \Omega$.

\begin{figure}
\centering
\begin{tikzpicture}[line cap=round,line join=round,>=triangle 45, scale=0.75]
\begin{axis}[
x=1.0cm,y=1.0cm,
axis lines=middle,
grid style=dashed,
ymajorgrids=true,
xmajorgrids=true,
xmin=-0.7821672074769789,
xmax=16.119827039555762,
ymin=-0.9758464598153493,
ymax=5.0225395582028804,
xtick={-0.0,1.0,...,16.0},
ytick={-0.0,1.0,...,5.0},
xticklabels=\empty,
yticklabels=\empty,
]
\clip(-0.7821672074769789,-0.9758464598153493) rectangle (16.119827039555762,5.0225395582028804);
\fill[line width=0.8pt,color=qqqqff,fill=qqqqff,fill opacity=0.10000000149011612] (3.,0.2) -- (5.,0.2) -- (5.,2.) -- (3.01,2.) -- cycle;
\fill[line width=0.8pt,color=zzttqq,fill=zzttqq,fill opacity=0.10000000149011612] (14.,2.) -- (11.,2.) -- (11.,0.) -- (14.,0.) -- cycle;
\draw [line width=0.8pt,color=qqqqff] (3.,0.2)-- (5.,0.2);
\draw [line width=0.8pt,color=qqqqff] (5.,0.2)-- (5.,2.);
\draw [line width=0.8pt,color=qqqqff] (5.,2.)-- (3.01,2.);
\draw [line width=0.8pt,color=qqqqff] (3.01,2.)-- (3.,0.2);
\draw [line width=0.8pt,color=zzttqq] (14.,2.)-- (11.,2.);
\draw [line width=0.8pt,color=zzttqq] (11.,2.)-- (11.,0.);
\draw [line width=0.8pt,color=zzttqq] (14.,0.)-- (14.,2.);
\draw [line width=0.4pt] (11.,2.5) circle (1.2cm);
\draw [line width=2.8pt,color=ffqqqq] (10.,0.)-- (14.997985579888068,0.);
\draw [line width=0.4pt] (9.,2.5) circle (1.2cm);
\draw [line width=0.4pt] (10.,2.5) circle (1.2cm);
\draw [line width=0.4pt] (3.,2.5) circle (1.2cm);
\draw [line width=0.4pt] (4.,2.5) circle (1.2cm);
\draw [line width=0.4pt] (5.,2.5) circle (1.2cm);
\draw [line width=0.4pt] (6.,2.5) circle (1.2cm);
\draw [line width=0.4pt] (7.,2.5) circle (1.2cm);
\draw [line width=0.4pt] (8.,2.5) circle (1.2cm);
\draw [line width=0.4pt] (2.5,1.25) circle (0.7cm);
\draw [line width=0.4pt] (3.49920443637167,1.2535356214921929) circle (0.7cm);
\draw [line width=0.4pt] (3.9981888336334075,1.2535356214921929) circle (0.7cm);
\draw [line width=0.4pt] (4.502983373885995,1.2516758379755484) circle (0.7cm);
\draw [line width=0.4pt] (3.254632076587796,0.7499719010165523) circle (0.4cm);
\draw [line width=0.4pt] (3.5039935212070126,0.7530571411983982) circle (0.4cm);
\draw [line width=0.4pt] (3.7504281300709352,0.7498977231360402) circle (0.4cm);
\draw [line width=0.4pt] (4.002218704792949,0.7514294937378291) circle (0.4cm);
\draw [line width=0.4pt] (4.253119939133888,0.7488951378353954) circle (0.4cm);
\draw [line width=0.4pt] (4.498952461669958,0.7463607819329616) circle (0.4cm);
\draw [line width=0.4pt] (4.747319340108462,0.7539638496402629) circle (0.4cm);
\draw [line width=0.4pt] (3.1996924648965543,0.30208629704338746) circle (0.25cm);
\draw [line width=0.4pt] (3.4043910889426026,0.2977310071700673) circle (0.25cm);
\draw [line width=0.4pt] (3.6003791332420105,0.2977310071700673) circle (0.25cm);
\draw [line width=0.4pt] (3.7963671775414185,0.30208629704338746) circle (0.25cm);
\draw [line width=0.4pt] (4.001065801587466,0.30208629704338746) circle (0.25cm);
\draw [line width=0.4pt] (4.201409135760195,0.30208629704338746) circle (0.25cm);
\draw [line width=0.4pt] (4.397397180059603,0.2977310071700673) circle (0.25cm);
\draw [line width=0.4pt] (4.59774051423233,0.30208629704338746) circle (0.25cm);
\draw [line width=0.4pt] (4.798083848405058,0.30644158691670764) circle (0.25cm);
\draw [line width=0.4pt] (3.000545882469815,0.29825884456667495) circle (0.25cm);
\draw [line width=0.4pt] (5.,0.2996424622646916) circle (0.25cm);
\draw [line width=2.8pt,color=qqqqff] (3.,0.)-- (5.,0.);
\draw [color=qqqqff](3.106863211329494,-0.03271315403143788) node[anchor=north west] {$W=\operatorname{supp}f$};
\draw [color=ffqqqq](11,-0.07) node[anchor=north west] {data $(-\D_{\dis})^{1/2}f|_{\Omega}$};
\draw [color=ffqqqq](14.3,0) node[anchor=north west] {$\Omega$};
\draw [line width=0.4pt] (12.28815181095648,2.123105809069128) circle (0.4cm);
\draw [line width=0.4pt] (12.591315778940743,1.599592754433063) circle (0.4cm);
\draw [line width=0.4pt] (12.64935564655909,1.0481807747991918) circle (0.4cm);
\draw (15,-0.04626335613604268) node[anchor=north west] {$(h\mathbb{Z})^d$};
\draw (0.12581874831642018,4.6150061678480085) node[anchor=north west] {$t>0$};
\draw [color=qqqqff](5.220694739647856,1.1461544290691799) node[anchor=north west] {$W\times [\mu,1]$};
\end{axis}
\end{tikzpicture}
\caption{The setting of the inverse problem from Theorem \ref{thm:application} and an illustration of its proof. We measure the data $(-\D_{\dis})^{1/2}f|_{\Omega}$ restricted to $\Omega \subset (h\Z)^d$ for a function $f$ which is compactly supported in $W \subset (h\Z)^d$, a set disjoint from $\Omega$. The associated inverse problem of recovering $f$ from the measured data is highly ill-posed. Corresponding to this, we deduce a logarithmic modulus of continuity up to an exponentially (in the lattice size $h$) small correction term which takes into account the discrete nature of the problem. As in the continuum setting, the key idea of deducing this relies on a chain of balls argument transferring information from $\Omega$ to $W$ through the upper half-space.} \label{fig:inverse}
\end{figure}
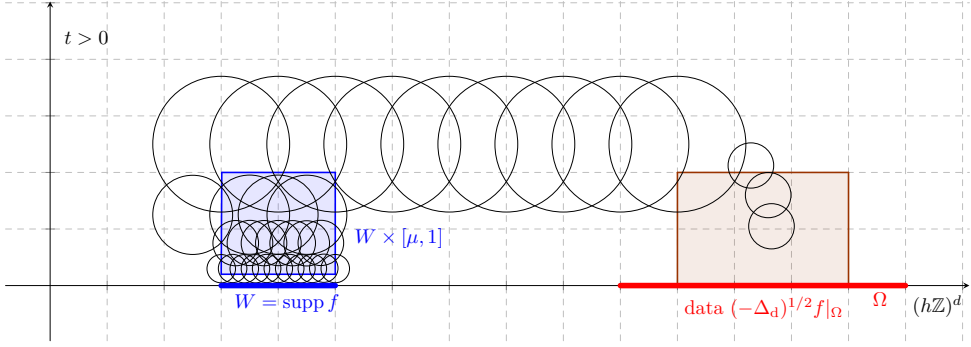

Now, using that $\supp(f) \subset W$, by the estimate from Theorem \ref{thm:boundary_bulk_discrete}, we have that 
\begin{align*}
\|\tilde{u}\|_{L^2(B_{\bar{r}_0}^+(x_0))} / \|f\|_{H^1(W)} 
\leq C \left( \|\p_t \tilde{u}\|_{L^2(B_1'(x_0))} / \|f\|_{H^1(W)} \right)^{\alpha} + C e^{-C h^{-1}} ,
\end{align*}
where we have used that $\|\tilde{u}\|_{L^2(B_1^+(x_0))} \leq C \|f\|_{H^1(W)}$ by Lemma \ref{lem:L2_bd} and the fact that $\tilde{u}|_{B_1'(x_0)} = 0$ since $\supp(f)\subset W$ and $B_1'(x_0)\subset \Omega$ by assumption.
We combine this with the interior three balls inequality in its scaled form (see \cite[Corollary 5.2]{FBRRS20}) and by using second condition in the chain of balls, to next obtain  
\begin{align*}
\|\tilde{u}\|_{L^2(B^+_{r_1}(x_1))} / \|f\|_{H^1(W)} 
&\leq C\big( \|\tilde{u}\|_{L^2(B^+_{r_1/2}(x_1))}/ \|f\|_{H^1(W)} \big)^{\alpha} + C e^{-C h^{-1}}\\
& \leq C\big( \|\tilde{u}\|_{L^2(B_{r_0}^+(x_0))}/ \|f\|_{H^1(W)}\big)^{\alpha} + C e^{-C h^{-1}} \\
& \leq C\big( \|\p_t \tilde{u}\|_{L^2(B_{1}'(x_0))}/ \|f\|_{H^1(W)} \big)^{\alpha^2} + 2 C e^{-C\alpha h^{-1}}.
\end{align*}
Here, invoking Lemma \ref{lem:L2_bd}, we estimated that $\|\tilde{u}\|_{L^2(B^+_{2r_1})}/\|f\|_{H^1(W)}\leq C$ and have used that $\supp_{\R^d}(\tilde{u}) = \supp(f) \subset W$.
We now iterate this estimate, use that the balls in the chain cover $W\times [\mu,1]$ and obtain that for $N \sim C(d, \Omega, W)(|\log(\mu)|+1)$ 
\begin{align}
\label{eq:iterated}
\|\tilde{u}\|_{L^2(W \times [\mu,1])}/ \|f\|_{H^1(W)} \leq C^{N}C\left( \|\p_t \tilde{u}\|_{L^2(B_{1}'(x_0))}/ \|f\|_{H^1(W)} \right)^{\alpha^N} + N C^N e^{-c \alpha^{N-1} h^{-1}}.
\end{align}
We remark that this leads to the requirement that $\mu\geq C h_0$ for $C>1$ sufficiently large, in order to still be able to apply the scaled three balls inequality from \cite[Corollary 5.2]{FBRRS20}.
Further, by Poincar\'e's inequality, trace estimates (see Lemma \ref{lem:trace_aux}),
we deduce that 
\begin{align*}
\|f\|_{L^2(W)} &\leq C \mu^{1/2}\|\p_t \tilde{u}\|_{L^2(W \times [0,1])} + C \mu^{-\frac{1}{2}}\|\tilde{u}\|_{L^2(W \times [\mu,1])} \\
&\leq C \mu^{1/2}\|f\|_{H^1(W)} + C \mu^{-\frac{1}{2}}\|\tilde{u}\|_{L^2(W \times [\mu,1])} .
\end{align*}
Here, in the second line, we have used that by the properties of the Caffarelli--Silvestre extension we have that $\|\p_t \tilde{u}\|_{L^2((h\Z)^d \times \R_+)} \leq C \|f\|_{H^1((h\Z)^d)}$.
Combining this with the above bound \eqref{eq:iterated}, implies that for $N \sim C(d,\Omega,W)(|\log(\mu)|+1)$
\begin{align*}
\|f\|_{L^2(W)} 
&\leq C \mu^{1/2}\|f\|_{H^1(W)} +  \mu^{-\frac{1}{2}} C^{N}\left( \|\p_t\tilde{u}\|_{L^2(\Omega \times \{0\})}/ \|f\|_{H^1(W)} \right)^{\alpha^N} \|f\|_{H^1(W)}\\
& \quad + \mu^{-\frac{1}{2}}N C^N e^{-c \alpha^{N-1} h^{-1}} \|f\|_{H^1(W)}.
\end{align*}
Possibly enlarging the constants $C>1$ we may estimate this as follows:
\begin{align}
\label{eq:mu_bd}
\begin{split}
\|f\|_{L^2(W)} 
&\leq C \mu^{1/2}\|f\|_{H^1(W)} +   C^{N}C\big( \|\p_t\tilde{u}\|_{L^2(\Omega \times \{0\})}/ \|f\|_{H^1(W)} \big)^{\alpha^N} \|f\|_{H^1(W)}\\
& \quad + C^N e^{-c \alpha^{N-1} h^{-1}} \|f\|_{H^1(W)}.
\end{split}
\end{align}

We next recall that $\|\p_t \tilde{u}\|_{L^2(\Omega \times \{0\})} = \|(-\D_{\dis})^{\frac{1}{2}}f\|_{L^2(\Omega)}$ and choose
\begin{align*}
\mu \sim \max\Big\{C h_0,\Big|\log\Big( \frac{\|(-\D_{\dis})^{\frac{1}{2}} f\|_{L^2(\Omega)}}{\|f\|_{H^1(W)}} \Big)\Big|^{-2 \nu} \Big\}
\end{align*}
for some constant $\nu \in (0,\frac{1}{3})$ to be determined below. We set  $\epsilon:= \Big( \frac{\|(-\D_{\dis})^{\frac{1}{2}} f\|_{L^2(\Omega)}}{\|f\|_{H^1(W)}}\Big) \in (0,1)$ and observe, that after this choice of $\nu$, the smallness hypothesis on $h_0$ \eqref{eq:continuum_aprox}, which reads
\begin{align*}
C h_0 \leq \big|\log( \epsilon) \big|^{-1+{\nu}} |\log(-C\log(\epsilon))|^{-1}.
\end{align*}
implies that in fact $\mu=C|\log(\epsilon)|^{-2 \nu}$ for some large constant $C>1$. Inserting this into estimate \eqref{eq:mu_bd}, due to the dependence of $N$ on $\mu$ we thus obtain
\begin{align}
\label{eq:est_combine}
\begin{split}
\|f\|_{L^2(W)}
&\leq C|\log(\epsilon)|^{-\nu} \|f\|_{H^1(W)}
 + C\exp\big(-C|\log(
\epsilon)|^{1-2C_1|\log(\alpha)|{\nu}}
 \big) \|f\|_{H^1(W)}\\
& \quad + C\exp\big(C|\log(-C\log(\epsilon))|-C h^{-1}
|\log(\epsilon)|^{-2 C_1|\log(\alpha)|{\nu}}
 \big)  \|f\|_{H^1(W)},
\end{split} 
\end{align}
where $C_1>0$ is a constant depending on $d, \Omega, W$. 
Indeed, with $\epsilon\in (0,1)$ as above, using the relation between $N$ and $\mu$, we have rewritten the second contribution  on the right hand side of \eqref{eq:mu_bd} as
\begin{align*}
\exp\big( C|\log(\mu)| + C \log(\epsilon) \mu^{C|\log(\alpha)|} \big)
&= \exp\big( 2C\nu |\log(-C\log(\epsilon))| + C \log(\epsilon) |\log(\epsilon)|^{-2\nu C_1|\log(\alpha)|} \big) \\
& \leq \tilde{C}\exp\big( \tilde{C} \log(\epsilon) |\log(\epsilon)|^{-2\nu C_1|\log(\alpha)|} \big),
\end{align*}
for some appropriate constant $\tilde{C}>1$. Indeed, observe that the $2C\nu|\log(-C\log(\epsilon))|$ term in the exponential above is absorbed by the second term (note also that, since $\epsilon\in (0,1)$ then $|\log(\epsilon)|=-\log(\epsilon)$), whenever $1-2\nu C_1|\log(\alpha)|>0$. For the third right hand side contribution in \eqref{eq:mu_bd} we have argued similarly.
Choosing $\nu\in (0,1)$ such that $1-2C_1|\log(\alpha)|\nu= \nu$ and using that as $\epsilon \rightarrow 0$ the second right hand side term in \eqref{eq:est_combine} is dominated by the first right hand side term, then implies 
\begin{align}
\label{eq:est_combinea}
\begin{split}
\|f\|_{L^2(W)}
&\leq C|\log(\epsilon)|^{-\nu} \|f\|_{H^1(W)} + C\exp\big(C|\log(-C\log(\epsilon))|-C h^{-1}
|\log(\epsilon)|^{-1+\nu}
 \big)  \|f\|_{H^1(W)}\\
&\leq C|\log(\epsilon)|^{-\nu} \|f\|_{H^1(W)} + C\exp\big(-\tilde{C}h^{-1}
|\log(\epsilon)|^{-1+ \nu}
 \big)  \|f\|_{H^1(W)}
\end{split} 
\end{align}
with $\nu = \frac{1}{2C_1|\log(\alpha)| + 1}>0$ and a possibly enlarged constant $C>1$ and a new constant $\tilde{C}>0$. In the last line, we have used the smallness condition for $h_0>0$.
\end{proof}

\section[Fractional Discrete Laplacian in the Torus]{Fractional Powers of the Laplacian in the Discrete Torus}
\label{sec:torus}

Let $M\in \N$ and define $\mathbb{T}_{\frac{2\pi}{M}}$ to be the circle of circumference $2\pi/M$, which we identify with the interval $[-\frac{\pi}{M},\frac{\pi}{M})$. The Pontryagin dual is $\Z M$. Thus, the Pontryagin dual of $\mathbb{T}_{2\pi}$ is $\Z$. If we discretise $\T_{2\pi}$ by taking $M$ equispaced points, we have that the Pontryagin dual of $\Z\frac{2\pi}{M}/\Z 2\pi=\frac{2\pi}{M}\Z_M$ is $\Z_M$.
The following dictionary might help:

\begin{center}
{\tabulinesep=1.3mm
\begin{tabu}{ c c } \hline
\textbf{Group} &
	\textbf{Dual} \\
\hline\hline
 $\R/\Z 2\pi=\mathbb{T}_{2\pi}$& $\Z$\\
$\Z\frac{2\pi}{M}/\Z 2\pi=\frac{2\pi}{M}\Z_M$& $\Z_M$ \\
$\mathbb{T}_{2\pi/M}$& $\Z M$  \\
\hline
\end{tabu}
}
\end{center}

Let us take $M=2N+1$, $N\in \N$. So if in particular we discretise $\mathbb{T}_{2\pi}$ by taking $2N+1$ equispaced mesh points with distance $h=\dfrac{2\pi}{2N+1}$, we can define the \textit{discrete torus} as the set of points 
$$
A_h^N:=h\Z_{2N+1}:=h\{-N,\ldots,N\}.
$$ 
The Pontryagin dual of $h\Z_{2N+1}$ is $\Z_{2N+1}$. The $d$-dimensional discrete torus is $A_{h,d}^N=(h\{-N,\ldots,N\})^d$

The discrete Fourier transform of a finite bounded sequence $\{u_j\}_{j\in\{-N,\ldots,N\}^d}$ is given by 
\begin{equation}
\label{eq:DFT}
\widehat{u}^D_{m}=\frac{1}{(2N+1)^d}\sum_{j\in\{-N,\ldots,N\}^d}u_je^{-2\pi i m\cdot\frac{ j}{2N+1}},\quad m\in\{-N,\ldots,N\}^d.
\end{equation}
Recall that the notation is $u_j:=u(jh)$.

The fractional Laplacian on $A_{h,d}^N$ is initially defined as 
$$
(-\Delta_{A_{h,d}^N})^s(\varphi)_j:=\sum_{k\in \{-N,\ldots,N\}^d}\Big(\sum_{i=1}^d\frac{4}{h^2}\sin^2\Big(k_i\frac{\pi}{2N+1}\Big)\Big)^sc_k(\varphi)e^{ik\cdot\frac{2\pi j}{2N+1}}
$$
where $c_k(\varphi)=\widehat{\varphi}^D_{k}$ is the Fourier coefficient of $\varphi:A_{h,d}^N\to \R$ given in \eqref{eq:DFT}. Observe that, alike the situation with the fractional Laplacian on the continuous torus, see \cite{RS_trans}, $(-\Delta_{A_{h,d}^N})^s$ preserves the class of smooth functions on $A_{h,d}^N$. By symmetry, we can define this operator for any function $v$ which is a periodic distribution on $A_{h,d}^N$, namely 
$$
\langle (-\Delta_{A_{h,d}^N})^sv,\phi\rangle_{C^{\infty}(A_{h,d}^N)}:=\sum_{j\in\{-N,\ldots,N\}^d} v_j(-\Delta_{A_{h,d}^N})^s\phi_j, \quad \phi \in C^{\infty}(A_{h,d}^N).
$$

\subsection{A transference formula for fractional discrete Laplacians}
\label{subsec:transf}

Our main objective in this section is the derivation of a transference formula for the fractional Laplacian on the torus. To this end, we rely on a suitable Poisson summation formula on the \textit{dual} of the discrete torus. While this is known, we briefly recall it with our normalization conventions and in our set-up: Let $f:(h\Z)^d \to \R$, $f \in \mathcal{S}((h\Z)^d)$. For $x\in \{-N,\ldots, N\}^d$, we compute the Fourier coefficients of $\sum_{m\in \Z^d}f\big(h(x+m(2N+1))\big)$ namely, for $j\in\{-N,\ldots,N\}^d$,
\begin{align*}
&\frac{1}{(2N+1)^d}\sum_{x\in\{-N,\ldots,N\}^d}\Big[\sum_{m\in \Z^d}f\big(h(x+m(2N+1))\big)\Big]e^{-ix\cdot\frac{2\pi j}{2N+1}}\\
&=\frac{1}{(2N+1)^d}\sum_{m\in \Z^d}\sum_{x\in \{-N,\ldots, N\}^d}f\big(h(x+m(2N+1))\big)e^{-ix\cdot\frac{2\pi j}{2N+1}}\\
&=\frac{1}{(2N+1)^d}\sum_{m\in \Z^d}\sum_{y\in\{- N,\ldots,N\}^d+m(2N+1)}f_ye^{-iy\cdot\frac{2\pi j}{2N+1}}\\
&=\frac{1}{(2N+1)^d}\sum_{y\in \Z^d}f_ye^{-iy\cdot\frac{2\pi j}{2N+1}}=\frac{1}{(2N+1)^d}\mathcal{F}_{(h\Z)^d}(f)\Big(\frac{2\pi j}{2N+1}\Big).
\end{align*}
Inverting this, we obtain the following Poisson summation formula: for $x\in \{-N,\ldots,N\}^d$,
\begin{equation}
\label{eq:PSF}
\sum_{m\in \Z^d}f(h(x+m(2N+1)))=\frac{1}{(2N+1)^d}\sum_{ j\in \{-N,\ldots, N\}^d}\mathcal{F}_{(h\Z)^d}(f)\Big(\frac{2\pi j}{2N+1}\Big)e^{ix\cdot\frac{2\pi j}{2N+1}}. 
\end{equation}

Now for a function $u:(h \Z)^d\to \R$ we define its \textit{periodisation} as the function $p_{\Sigma}u: A_{h,d}^N\to \R$ given formally by
\begin{equation}
\label{eq:perio}
p_{\Sigma}u_j:=\sum_{l\in \Z^d}u\big(h(l(2N+1)+j)\big), \quad j\in \{-N,\ldots, N\}^d.
\end{equation}
For a function $v: A_{h,d}^N\to \R$ we define its \textit{repetition} $Rv: (h\Z)^d\to \R$ by 
$$
Rv_m:=\sum_{l\in \Z^d}v\big(h(m-l(2N+1))\big)\chi_{A_{h,d}^N}\big(h(m-l(2N+1))\big). 
$$
This is nothing but the $A_{h,d}^N$-periodic function on $(h\Z)^d$ that coincides with $v$ on $A_{h,d}^N$. The periodicity is easy to check: let $m\in \{-N,\ldots, N\}^d$, $j\in (2N+1)\Z^d$. We have
\begin{align*}
Rv_{m+j}&=\sum_{l\in \Z^d}v\big(h(m+j-l(2N+1))\big)\chi_{A_{h,d}^N}\big(h(m+j-l(2N+1))\big)\\
&=\sum_{l\in \Z^d}v\big(h(m-l(2N+1))\big)\chi_{A_{h,d}^N}\big(h(m-l(2N+1))\big)=Rv_m.
\end{align*}

Recall that, for $u:A_{h,d}^N\to \R$ with $h=\dfrac{2\pi}{2N+1}$ we are considering the Laplacian on the dual of the discrete torus
$$
\Delta_{A_{h,d}^N}u_j:=\frac{1}{h^2}\sum_{i=1}^d\big(u(h(j+e_i))-2u(hj)+u(h(j-e_i))\big), \quad j\in \{-N,\ldots, N\}^d.
$$
 We aim to prove the following. 
\begin{thm}
\label{thm:transF}
Let $v$ be a bounded function on $A_{h,d}^N$ and $h=\dfrac{2\pi}{2N+1}$. Then its repitition $Rv$ is a function in $\ell_s$ which defines a distribution in $\mathcal{S}_s'$ and such that
\begin{equation}
\label{eq:idtransF}
\sum_{l\in \Z^d}Rv_l(-\Delta_{\dis})^s\varphi_l=\sum_{j\in \{-N,\ldots,N\}^d}v_j(-\Delta_{A_{h,d}^N})^s(p_{\Sigma}\varphi)_j, \quad \varphi\in\mathcal{S}((h\Z)^d).
\end{equation}
\end{thm}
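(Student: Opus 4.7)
The plan is to first verify that $Rv$ belongs to $\ell_s$ and defines an element of $\mathcal{S}_s'$, and then reduce the identity \eqref{eq:idtransF} to the key commutation relation
$$
p_\Sigma\bigl((-\Delta_{\dis})^s \varphi\bigr)_j = (-\Delta_{A_{h,d}^N})^s (p_\Sigma \varphi)_j, \quad j\in\{-N,\ldots,N\}^d,\ \varphi \in \mathcal{S}((h\Z)^d),
$$
expressing that periodisation commutes with the fractional Laplacian. Since $v$ is bounded on the finite set $A_{h,d}^N$, its repetition $Rv$ is uniformly bounded on $(h\Z)^d$, hence
$$
\|Rv\|_{\ell_s} \leq \|v\|_{\ell^\infty_{A_{h,d}^N}} \sum_{m\in\Z^d} (1+|m|)^{-(d+2s)} < \infty
$$
for $s>0$; since elements of $\mathcal{S}_s$ decay at least as $(1+|l|)^{-(d+2s)}$, the pairing $\sum_l Rv_l \psi_l$ converges absolutely and $Rv$ defines a distribution in $\mathcal{S}_s'$.

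For the reduction, I split $\Z^d = \bigsqcup_{j \in \{-N,\ldots,N\}^d}\bigl(j + (2N+1)\Z^d\bigr)$ and use the defining property $Rv_{j+k(2N+1)} = v_j$. Absolute convergence (from $(-\Delta_{\dis})^s \varphi \in \mathcal{S}_s$ and boundedness of $Rv$) justifies Fubini and yields
$$
\sum_{l\in\Z^d} Rv_l (-\Delta_{\dis})^s\varphi_l = \sum_{j\in\{-N,\ldots,N\}^d} v_j \sum_{k\in\Z^d}(-\Delta_{\dis})^s\varphi_{j+k(2N+1)} = \sum_j v_j \bigl(p_\Sigma((-\Delta_{\dis})^s\varphi)\bigr)_j,
$$
so that the commutation relation above implies \eqref{eq:idtransF}.

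To prove the commutation relation, my preferred route goes through the heat semigroup representation \eqref{eq:fracn}. First one shows that $p_\Sigma \circ e^{t\Delta_{\dis}} = e^{t\Delta_{A_{h,d}^N}} \circ p_\Sigma$ on $\mathcal{S}((h\Z)^d)$; this amounts to identifying the periodised kernel $G^{A_{h,d}^N}(j,t) := \sum_{k\in\Z^d} G(j+k(2N+1), t)$ with the heat kernel on $A_{h,d}^N$, which follows by comparing discrete torus Fourier coefficients through the Poisson summation formula \eqref{eq:PSF} applied to $G(\cdot, t/h^2)$. Interchanging $p_\Sigma$ with the time integral in \eqref{eq:fracn}, justified by the decay of $e^{t\Delta_{\dis}}\varphi - \varphi$ for $\varphi \in \mathcal{S}$ combined with absolute integrability of $t^{-1-s}$ against this decay at both endpoints, then yields the claim. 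Alternatively one may argue directly on the Fourier side: by \eqref{eq:PSF} the $k$-th discrete Fourier coefficient of $p_\Sigma f$ equals $(2N+1)^{-d}\mathcal{F}_{(h\Z)^d}(f)\bigl(\tfrac{2\pi k}{2N+1}\bigr)$, and since $h = 2\pi/(2N+1)$, the Fourier multiplier of $(-\Delta_{\dis})^s$ evaluated at $\xi = \tfrac{2\pi k}{2N+1}$ matches the spectral multiplier defining $(-\Delta_{A_{h,d}^N})^s$.

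The main obstacle is the rigorous justification of the commutation identity: via the heat-semigroup route, one must carefully exchange the infinite periodic summation with the time integral, resting on the uniform Bessel-function decay estimates collected in Subsection~\ref{sub:heat}; via the Fourier route, one must apply Poisson summation to $(-\Delta_{\dis})^s\varphi$, which lies in $\mathcal{S}_s$ rather than in $\mathcal{S}$, although its polynomial decay of order $(1+|l|)^{-(d+2s)}$ turns out to be sufficient for the summation formula to remain valid.
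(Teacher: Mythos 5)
Your proposal is correct, and its overall skeleton --- checking $Rv\in\ell_s$, then splitting $\Z^d$ into the cosets $j+(2N+1)\Z^d$ to reduce \eqref{eq:idtransF} to the commutation identity $p_{\Sigma}\big((-\Delta_{\dis})^s\varphi\big)_j=(-\Delta_{A_{h,d}^N})^s(p_{\Sigma}\varphi)_j$ --- coincides with the paper's. For the commutation identity itself, your ``alternative'' Fourier argument is precisely the paper's proof: it computes $c_k(p_{\Sigma}\varphi)=(2N+1)^{-d}\mathcal{F}_{(h\Z)^d}(\varphi)(kh)$, applies the spectral multiplier (which matches the lattice symbol at $\xi=kh$ because $h=2\pi/(2N+1)$), and then applies the Poisson summation formula \eqref{eq:PSF} to $(-\Delta_{\dis})^s\varphi$; the point you flag --- that this function lies only in $\mathcal{S}_s$ rather than $\mathcal{S}$ --- is exactly what the paper absorbs through the absolute convergence furnished by the $(1+|l|)^{-(d+2s)}$ decay. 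Your preferred heat-semigroup route (commuting $p_{\Sigma}$ with $e^{t\Delta_{\dis}}$ via the periodised heat kernel and then with the subordination integral) is not the one used for this theorem, but it is essentially the computation the paper carries out in Appendix~\ref{subsec:heatDT} to rederive the pointwise formula on the torus; it buys the kernel-level identity $K_s^{A_{h,d}^N}(j)=\sum_{k}K_s^h(j+k(2N+1))$ directly, at the price of a more delicate interchange of the periodisation sum with the singular integral $\int_0^\infty(\cdot)\,dt/t^{1+s}$. One cosmetic difference: the paper verifies $Rv\in\ell_s$ by rewriting the weighted sum as $\sum_j|v_j|\,p_{\Sigma}[(1+|\cdot|)^{-(d+2s)}]_j$ and quoting summability of periodisations, rather than via your cruder (but equally valid) uniform bound $|Rv_l|\le\|v\|_{\ell^{\infty}}$.
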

\begin{proof}
Let us check that $Rv\in \ell_s$. We compute
\begin{align*}
&\sum_{l\in \Z^d}|Rv_l|(1+|l|)^{-(d+2s)}\\
&=\sum_{l\in \Z^d}\Big[\sum_{k\in \Z^d}|v\big(h(l-k(2N+1))\big)|\chi_{A_{h,d}^N}\big(h(l-k(2N+1))\big)\Big](1+|l|)^{-(d+2s)}\\
&=\sum_{k\in \Z^d}\sum_{l\in \{-N,\ldots,N\}^d+k(2N+1)}|v\big(h(l-k(2N+1))\big)|(1+|l|)^{-(d+2s)}\\
&=\sum_{k\in \Z^d}\sum_{j\in \{-N,\ldots,N\}^d}|v_j|\big(1+|j+k(2N+1|)^{-(d+2s)}\big)\\
&=\sum_{j\in \{-N,\ldots,N\}^d}|v_j|p_{\Sigma}[(1+|\cdot|)^{-(d+2s)}]_j.
\end{align*}
The function $\big(1+|\cdot|\big)^{-(d+2s)}$ is summable, and so is $|p_{\Sigma}[\big(1+|\cdot|\big)^{-(d+2s)}]$, (see  \cite[Chapter VII]{SW}, where this is shown for functions in $\R^d$ and their periodisation), so the last expression above is finite whenever $v$ is bounded. Hence, $Rv\in \ell_s$ and the left hand side of \eqref{eq:idtransF} is absolutely convergent.

Let, for $f \in \mathcal{S}((h\Z)^d)$,
$$
f_j=\sum_{k\in \{-N,\ldots,N\}^d}c_k(f)e^{ik\cdot\frac{2\pi j}{2N+1}}, \quad j\in \{-N,\ldots,N\}^d
$$
where
$$
c_k(f)=\frac{1}{(2N+1)^d}\sum_{j\in \{-N,\ldots,N\}^d}f_je^{-ik\cdot\frac{2\pi j}{2N+1}}, \quad k\in \{-N,\ldots,N\}^d.
$$
For $\varphi\in \mathcal{S}((h\Z)^d)$, we have that $p_{\Sigma}\varphi$ is summable on $A_{h,d}^N$ and
\begin{align*}
c_k(p_{\Sigma}\varphi)&=\frac{1}{(2N+1)^d}\sum_{j\in \{-N,\ldots,N\}^d}(p_{\Sigma} \varphi)_je^{-ik\cdot\frac{2\pi j}{2N+1}}\\
&=\frac{1}{(2N+1)^d}\sum_{j\in \{-N,\ldots,N\}^d}\sum_{l\in \Z^d}\varphi\big(h(j+l(2N+1))\big)e^{-ik\cdot\frac{2\pi j}{2N+1}}\\
&=\frac{1}{(2N+1)^d}\sum_{l\in \Z^d}\sum_{n\in \{-N,\ldots,N\}^d+l(2N+1)}\varphi_ne^{-in\cdot\frac{2\pi k}{2N+1}}\\
&=\frac{1}{(2N+1)^d}\sum_{n\in \Z^d}\varphi_ne^{-in\cdot\frac{2\pi k}{2N+1}}=\frac{1}{(2N+1)^d}\mathcal{F}_{(h\Z)^d}(\varphi)\Big(\frac{2\pi k}{2N+1}\Big), \quad k\in \{-N,\ldots,N\}^d.
\end{align*}
Then we have proven
\begin{equation}
\label{eq:coeff}
c_k(p_{\Sigma}\varphi)=\frac{1}{(2N+1)^d}\mathcal{F}_{(h\Z)^d}(\varphi)\Big(\frac{2\pi k}{2N+1}\Big)=\frac{1}{(2N+1)^d}\mathcal{F}_{(h\Z)^d}(\varphi)(kh), \quad k\in \{-N,\ldots,N\}^d.
\end{equation}
Further, since $\varphi, \mathcal{F}_{(h\Z)^d}(\varphi)\in  \mathcal{S}((h\Z)^d)$, we have
$$
(p_{\Sigma}\varphi)_j:=\frac{1}{(2N+1)^d}\sum_{k\in \Z^d}\mathcal{F}_{(h\Z)^d}(\varphi)\Big(\frac{2\pi k}{2N+1}\Big)e^{ik\cdot\frac{2\pi j}{2N+1}},
$$
and this series converges absolutely, see \cite[Chapter VII]{SW}. It follows that $p_{\Sigma}\varphi$ is a smooth function on $A_{h,d}^N$ and hence $(-\Delta_{A_{h,d}^N})^s(p_{\Sigma}\varphi)$ is also smooth, thus the right hand side of \eqref{eq:idtransF} is absolutely convergent.

Moreover, for $j\in\{-N,\ldots, N\}^d$, by using \eqref{eq:coeff} in the first equality below and \eqref{eq:PSF} in the third one,
\begin{align*}
(-\Delta_{A_{h,d}^N})^s(p_{\Sigma}\varphi)_j&:=\sum_{k\in \{-N,\ldots,N\}^d}\Big(\sum_{i=1}^d\frac{4}{h^2}\sin^2\Big(k_i\frac{\pi}{2N+1}\Big)\Big)^sc_k(p_{\Sigma}\varphi)e^{ik\cdot\frac{2\pi j}{2N+1}}\\
&=\frac{1}{(2N+1)^d}\sum_{k\in \{-N,\ldots,N\}^d}\Big(\sum_{i=1}^d\frac{4}{h^2}\sin^2\Big(k_i\frac{\pi}{2N+1}\Big)\Big)^s\mathcal{F}_{(h\Z)^d}(\varphi)(kh)e^{ik\cdot j\frac{2\pi}{2N+1}}\\
&=\frac{1}{(2N+1)^d}\sum_{k\in \{-N,\ldots,N\}^d}\mathcal{F}_{(h\Z)^d}[(-\Delta_{\dis})^s\varphi](k)e^{ik\cdot j\frac{2\pi}{2N+1}}\\
&=\sum_{m\in \Z^d}[(-\Delta_{\dis})^s\varphi]\big(h(m(2N+1)+j)\big)\\
&=p_{\Sigma}[(-\Delta_{\dis})^s\varphi]_j,
\end{align*}
thus
\begin{equation}
\label{eq:lapla}
(-\Delta_{A_{h,d}^N})^s(p_{\Sigma}\varphi)_j=p_{\Sigma}[(-\Delta_{\dis})^s\varphi]_j,\quad j\in\{-N,\ldots,N\}^d.
\end{equation}
We use the above to prove the desired transference formula. Indeed,
\begin{align*}
&\sum_{l\in \Z^d}Rv_l(-\Delta_{\dis})^s\varphi_l\\
&=\sum_{l\in \Z^d}\Big[\sum_{k\in \Z^d}v\big(h(l-k(2N+1))\big)\chi_{A_{h,d}^N}\big(h(l-k(2N+1))\big)\Big](-\Delta_{\dis})^s\varphi_l\\
&=\sum_{k\in \Z^d}\sum_{l\in \{-N,\ldots,N\}^d+k(2N+1)}v\big(h(l-k(2N+1))\big)(-\Delta_{\dis})^s\varphi_l\\
&=\sum_{k\in \Z^d}\sum_{j\in \{-N,\ldots,N\}^d}v_j(-\Delta_{\dis})^s\varphi\big(h(j+k(2N+1))\big)\\
&=\sum_{j\in \{-N,\ldots,N\}^d}v_jp_{\Sigma}[(-\Delta_{\dis})^s\varphi]_j\\
&=\sum_{j\in \{-N,\ldots,N\}^d}v_j(-\Delta_{A_{h,d}^N})^s(p_{\Sigma}\varphi)_j,
\end{align*}
where we used \eqref{eq:lapla} in the last equality. 
\end{proof}

\subsection{First application: Proof of Theorem \ref{thm:transP}}

Via transference from the pointwise formula for the fractional discrete Laplacian we will deduce the pointwise formula for fractional powers of the Laplacian on the discrete torus. Recall the notations for $A_{h,d}^N$ and $A_h^N$ at the beginning of this section.

We are going to prove that, for any $\phi\in C^{\infty}(A_{h,d}^N)$, the following identity holds
\begin{equation}
\label{eq:desired}
\langle (-\Delta_{A_{h,d}^N})^sv,\phi\rangle_{C^{\infty}(A_{h,d}^N)}=\sum_{j\in\{-N,\ldots,N\}^d}q_j\phi_j,
\end{equation}
where the function $q$ is given by 
$$
q(j)=\sum_{\substack{k\in \{-N,\ldots,N\}^d\\
k\neq j}}\big(v(j)-v(k)\big)K_s^{A_{h,d}^N}(j-k), \quad j\in \{-N,\ldots,N\}^d.
$$
 First observe that $Rv\in \ell_s((h\Z)^d)$. We have, for  $j\in \{-N,\ldots,N\}^d$,
\begin{align}
\label{eq:hj}
\notag &(-\Delta_{\dis})^s(Rv)_j\\
&=\sum_{\substack{k\in \Z^d\\
k\neq j}}\big(Rv_j-Rv_k\big)K_s^h(j-k)\\
&\notag=\sum_{l\in \Z^d}\sum_{\substack{m\in\{-N,\ldots,N\}^d\\
m\neq j}}\big(Rv_j-Rv\big(h(m-l(2N+1))\big)\big)K_s^h(j-m+l(2N+1))\\
&\notag=\sum_{l\in \Z^d}\sum_{\substack{m\in\{-N,\ldots,N\}^d\\
m\neq j}}\big(v_j-v_m\big)K_s^h(j-m+l(2N+1))\\
&\notag=\sum_{\substack{m\in \{-N,\ldots,N\}^d\\
m\neq j}}\big(v_j-v_m\big)\sum_{l\in \Z^d}K_s^h(j-m+l(2N+1))\\
&\notag=\sum_{\substack{m\in\{-N,\ldots,N\}^d\\
m\neq j}}\big(v_j-v_m\big)K_s^{A_{h,d}^N}(j-m)=q_j.
\end{align}
It is easy to check that $(-\Delta_{\dis})^s(Rv)$ is a $(h(2N+1))^d$-periodic function. Indeed, let $j\in\{-N,\ldots,N\}^d$, $l\in \Z^d$. By the periodicity of $Rv$ then
\begin{align*}
&(-\Delta_{\dis})^s(Rv)\big(h(j+l(2N+1))\big)\\
&=\sum_{\substack{k\in\Z^d\\
k\neq j+l(2N+1)}}\big(Rv\big(j+l(2N+1)\big)-Rv_k\big)K_s^h(j+l(2N+1)-k)\\
&=\sum_{\substack{k\in\Z^d\\
k\neq j+l(2N+1)}}\big(Rv_j-Rv_k\big)K_s^h(j+l(2N+1)-k)\\
&=\sum_{\substack{k\in\Z^d\\
k\neq j}}\big(Rv_j-Rv(h(k+l(2N+1)))\big)K_s^h(j-k)\\
&=\sum_{\substack{k\in\Z^d\\
k\neq j}}\big(Rv_j-Rv_k\big)K_s^h(j-k)\\
&=(-\Delta_{\dis})^s(Rv)_j.
\end{align*}
Let $\phi\in C^{\infty}(A_{h,d}^N)$. Moreover, it can be proved analogously as in \cite[Lemma 2.3]{RS_trans} that there exists $\varphi\in \mathcal{S}((h\Z)^d)$ such that 
\begin{equation}
\label{eq:phij}
\phi_j=p_{\Sigma}\varphi_j, \quad j\in \{-N,\ldots,N\}^d.
\end{equation} 
Then, by Theorem \ref{thm:transF} and \eqref{eq:hj},
\begin{align*}
\langle (-\Delta_{A_{h,d}^N})^sv,\phi\rangle_{C^{\infty}(A_{h,d}^N)}&=\langle (-\Delta_{A_{h,d}^N})^sv,p_{\Sigma}\varphi\rangle_{C^{\infty}(A_{h,d}^N)}\\
&=\langle (-\Delta_{\dis})^s(Rv),\varphi\rangle_{(\mathcal{S}((h\Z)^d))'}\\
&=\sum_{j\in \Z^d}(-\Delta_{\dis})^s(Rv)_j\varphi_j\\
&=\sum_{k\in \Z^d}\sum_{j\in\{-N,\ldots,N\}^d+k(2N+1)}(-\Delta_{\dis})^s(Rv)_j\varphi_j\\
&=\sum_{k\in \Z^d}\sum_{j\in\{-N,\ldots,N\}^d}(-\Delta_{\dis})^s(Rv)\big(h(j+k(2N+1))\big)\varphi\big(h(j+k(2N+1))\big)\\
&=\sum_{k\in \Z^d}\sum_{j\in\{-N,\ldots,N\}^d}(-\Delta_{\dis})^s(Rv)_j\varphi\big(h(j+k(2N+1))\big)\\
&=\sum_{j\in\{-N,\ldots,N\}^d}(-\Delta_{\dis})^s(Rv)_jp_{\Sigma}\varphi_j\\
&=\sum_{j\in\{-N,\ldots,N\}^d}q_j\phi_j,
\end{align*}
as desired.

The explicit expression for the case $d=1$ easily follows from the expression for the kernel in~\eqref{eq:kernelKs}.

\subsection{Second application: extension problem}

The Caffarelli--Silvestre extension problem characterisation is valid also for the fractional discrete Laplacian, see \cite{CRSTV16}. By the general theory \cite{ST10}, it is also valid for the fractional discrete torus. We can derive the latter from the result of $(h\Z)^d$ in \cite{CRSTV16}.  

\begin{proof}[Proof of Theorem \ref{thm:transEO}]
Consider $u=Rv\in \ell_s$. Let $U$ be the solution to the extension problem for $u$
$$
\begin{cases}
\Delta_{\dis}U+\frac{1-2s}{t}U_t+U_{tt}=0, \quad &\text{ in } (h\Z)^d\times (0,\infty),\\
U(hm,0)=u(hm), \quad &\text{ on }(h\Z)^d. 
\end{cases}
$$
From \cite{ST10} it is known that $U(hm,t)=P_t^s\ast u(hm)$ for a suitable Poisson kernel $P_t^s(hm)$. Observe that, for $j\in \{-N,\ldots, N\}^d$,
\begin{align*}
U(hj,t)&=\sum_{k\in \Z^d}P_t^s(h(j-k))u_k\\
&=\sum_{k\in \Z^d}\sum_{m\in \{-N,\ldots,N\}^d}P_t^s(h(j-m+k(2N+1)))u(h(m+k(2N+1)))\\
&=\sum_{k\in \Z^d}\sum_{m\in \{-N,\ldots,N\}^d}P_t^s(h(j-m+k(2N+1)))Rv(h(m+k(2N+1)))\\
&=\sum_{k\in \Z^d}\sum_{m\in \{-N,\ldots,N\}^d}P_t^s(h(j-m+k(2N+1)))v(hm)\\
&=\sum_{m\in \{-N,\ldots,N\}^d}(p_{\Sigma}P_t^s)_{j-m}v_m=v\ast_{A_{h,d}^N}(p_{\Sigma}P_t^s)_j.
\end{align*}
By uniqueness, it follows that $V(\cdot,t)=v\ast_{A_{h,d}^N}(p_{\Sigma}P_t^s)$, for each $t>0$. Moreover, by Theorem \ref{thm:transF} and the extension problem for the fractional discrete Laplacian, 
\begin{align*}
&c_s\sum_{j\in \{-N,\ldots,N\}^d}v_j(-\Delta_{A_{h,d}^N})^s(p_{\Sigma}\varphi)_j=c_s\sum_{l\in \Z^d}u_l(-\D_{\dis})^s\varphi_l\\
&\quad =-\lim_{t\to0^+}\sum_{l\in \Z^d}t^{1-2s}U_t(hl,t)\varphi_l\\
&\quad=-\lim_{t\to0^+}\sum_{k\in \Z^d}\sum_{j\in \{-N,\ldots,N\}^d}t^{1-2s}U_t(h(j+k(2N+1)),t)\varphi(h(j+k(2N+1)))\\
&\quad=-\lim_{t\to0^+}\sum_{j\in \{-N,\ldots,N\}^d}t^{1-2s}U_t(hj,t)(p_{\Sigma}\varphi)_j\\
&\quad=-\lim_{t\to0^+}\sum_{j\in \{-N,\ldots,N\}^d}t^{1-2s}V_t(hj,t)(p_{\Sigma}\varphi)_j.
\end{align*}
Taking \eqref{eq:phij} into account, \eqref{eq:DtoN} follows. 
\end{proof}

\subsection{Failure of global unique continuation: proof of Theorem~\ref{prop:UCP_torus_new}}

We are going to prove Theorem~\ref{prop:UCP_torus_new}. 
If $j\in X$, the requirement $u_j=0$ implies
$$
(-\D_{A_h^N})^s u_j=-\sum_{\substack{m=-N \\ m \neq j}}^N u_m K_s^{A_h^N}(j-m).
$$
We compute $(-\D_{\dis})^s u_j$ for all $j\in X$, which yields $M$ equations. On the other hand, set $u_j=0$ for all $j\in \{-N,\dots,N\}\setminus Y$, where $Y$ is a set of cardinality $M+1$ such that $Y\cap X=\emptyset$. This is only possible if $M+(M+1)\le 2N+1$ or, equivalently, $M\le N$. We have that setting $(-\D_{\dis})^su_j=0$ for all $j\in X$ gives then a homogeneous system of $M$ equations and $M+1$ unknowns. Since there are more unknowns than equations, there are infinitely many nontrivial solutions.

\begin{rmk}
If we have a function $u$ such that $u_j=0=(-\D_{A_h^N})^su_j$ for $j\in X$, $X$ being a set of cardinality $m>N$, by the same reasoning as above, we have $m$ equations coming from the fact that the fractional Laplacian vanishes in $X$, and, since $u_j=0$ in $X$, there are $2N+1-m$ values of $u$ where we do not have a priori information. The assumption that $m>N$ implies that we have a homogeneous system with more equations than unknowns. Hence, we can no longer argue as above, in order to conclude the failure of the global UCP. It would be interesting to study the unique continuation properties in this situation further, and to know whether/ in which situation the global UCP holds.
\end{rmk}

\appendix

\section{An Estimate for the $d$-Dimensional Kernel}
\label{subsec:technical-lemmas}

In this subsection we aim to prove an upper estimate for the kernel $K_s^h(m)$, $m\in \Z^d$, which will be useful for the counterexamples in higher dimensions. First we need some technical lemmas.

\begin{lem}
\label{lem:integral}
Let $\alpha\ge0$, $\beta>\alpha+1$, $A,B>0$. Then
$$
 \int_0^1\frac{A^{\alpha}(1-A)^\alpha}{(A+B)^{\beta}}\,dA\le \frac{\Gamma(\alpha+1)\Gamma(\beta-\alpha-1)}{\Gamma(\beta)}
\frac{1}{B^{\beta-\alpha-1}}.
$$
\end{lem}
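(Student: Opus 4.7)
The plan is to discard the factor $(1-A)^{\alpha}$, extend the range of integration, and recognize the resulting expression as a Beta integral.

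First I would bound $(1-A)^{\alpha}\le 1$ on $[0,1]$, which is valid since $\alpha\ge 0$ and $1-A\in[0,1]$. Next, since the integrand $A^{\alpha}/(A+B)^{\beta}$ is nonnegative, extending the domain of integration from $[0,1]$ to $[0,\infty)$ only enlarges the integral, so
\[
 \int_0^1\frac{A^{\alpha}(1-A)^\alpha}{(A+B)^{\beta}}\,dA \le \int_0^{\infty}\frac{A^{\alpha}}{(A+B)^{\beta}}\,dA.
\]

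Then I would perform the substitution $A=Bt$, $dA=B\,dt$, which yields
\[
\int_0^{\infty}\frac{A^{\alpha}}{(A+B)^{\beta}}\,dA = B^{\alpha+1-\beta}\int_0^{\infty}\frac{t^{\alpha}}{(1+t)^{\beta}}\,dt.
\]
The latter integral is a standard Beta function representation (valid precisely under the hypothesis $\beta>\alpha+1$, which guarantees convergence at infinity), namely
\[
\int_0^{\infty}\frac{t^{\alpha}}{(1+t)^{\beta}}\,dt = B(\alpha+1,\beta-\alpha-1) = \frac{\Gamma(\alpha+1)\Gamma(\beta-\alpha-1)}{\Gamma(\beta)}.
\]
Combining these steps produces the claimed inequality. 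There is no real obstacle here: the assumption $\alpha\ge 0$ is used to kill the $(1-A)^{\alpha}$ factor, the positivity of the integrand is used to extend the range, and the condition $\beta>\alpha+1$ is exactly what is needed both for the right-hand side Gamma factor $\Gamma(\beta-\alpha-1)$ to be defined and for the improper integral at infinity to converge.
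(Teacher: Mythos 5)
Your proof is correct and follows essentially the same route as the paper's: drop the factor $(1-A)^{\alpha}$, rescale by $A=Bt$, extend the range of integration to $[0,\infty)$, and identify the Beta integral $\int_0^\infty t^{\alpha}(1+t)^{-\beta}\,dt=\Gamma(\alpha+1)\Gamma(\beta-\alpha-1)/\Gamma(\beta)$. The only (immaterial) difference is that you extend the domain before substituting while the paper substitutes first and then extends the upper limit from $1/B$ to $\infty$.
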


\begin{proof}
Observe that 
\begin{align*}
 \int_0^1\frac{A^{\alpha}(1-A)^\alpha}{(A+B)^{\beta}}\,dA
\le \int_0^1\frac{A^{\alpha}}{(A+B)^{\beta}}\,dA&=\frac{1}{B^{\beta}}
 \int_0^1\frac{A^\alpha}{(1+\frac{A}{B})^{\beta}}\,dA\\ &=\frac{1}{B^{\beta}} \int_0^{1/B}\frac{(Bz)^\alpha B}{(1+z)^{\beta}}\,dz\\
 &\le \frac{1}{B^{\beta}} \int_0^{\infty}\frac{(Bz)^\alpha B}{(1+z)^{\beta}}\,dz\\
 &\le
\frac{\Gamma(\alpha+1)\Gamma(\beta-\alpha-1)}{\Gamma(\beta)}
\frac{1}{B^{\beta-\alpha-1}},
\end{align*}
where we used the definition of the Beta function
$$
B(x,y):=\int_0^{\infty}\frac{t^{x-1}}{(1+t)^{x+y}}\,dt=\frac{\Gamma(x)\Gamma(y)}{\Gamma(x+y)},
$$
whenever $\operatorname{Re} x, \operatorname{Re} y>0$.
\end{proof}

Let us use the notation
$\|m\|_1:=\sum_{i=1}^d|m_i|$.

\begin{lem}
\label{lem:horror1}
Let $m=(m_1,\ldots,m_d)\in \N^d$. Define, for $i=1,\ldots,d$,
\begin{equation}
\label{ed:U}
U_i:=\frac{u_i^{m_i-1/2}(1-u_i)^{m_i-1/2}}{\Gamma(m_i+1/2)}.
\end{equation}
Let $1\le r\le d$ be fixed and $b>\frac{d}{2}$. Then, for $C>0$,
\begin{equation*}
 \int_{[0,1]^{d-r+1}}
\frac{\prod_{i=r}^dU_i\,du_i}
{(C+u_r+\cdots+u_d)^{\|m\|_1+b}}\le
\frac{\Gamma(\sum_{i=1}^{r-1}m_i+b-\frac{d-(r-1)}{2})}
{C^{\sum_{i=1}^{r-1}m_i+b-\frac{d-(r-1)}{2}}\Gamma(\|m\|_1+b)}.
\end{equation*}
\end{lem}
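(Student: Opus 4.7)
The natural strategy is iterated application of Lemma A.1, integrating out one variable at a time in the order $u_d, u_{d-1}, \dots, u_r$ so that the Gamma factors telescope. The key observation is that the definition $U_i = u_i^{m_i-1/2}(1-u_i)^{m_i-1/2}/\Gamma(m_i+1/2)$ is exactly tailored so that when Lemma A.1 is applied with $\alpha = m_i - 1/2$, the resulting $\Gamma(\alpha+1) = \Gamma(m_i+1/2)$ in the numerator cancels the $\Gamma(m_i+1/2)$ already present in the denominator of $U_i$.

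More concretely, I would proceed inductively. Fixing all variables except $u_d$ and setting $B = C + u_r + \cdots + u_{d-1}$, an application of Lemma A.1 with $\alpha = m_d - 1/2$ and $\beta = \|m\|_1 + b$ yields
\[
\int_0^1 \frac{U_d\,du_d}{(B+u_d)^{\|m\|_1 + b}} \;\le\; \frac{\Gamma(\|m\|_1 + b - m_d - 1/2)}{\Gamma(\|m\|_1 + b)} \cdot \frac{1}{B^{\|m\|_1 + b - m_d - 1/2}} .
\]
The integrand for the next integration (in $u_{d-1}$) now has the same shape, but with the exponent of the denominator reduced by $m_d + 1/2$. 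Applying Lemma A.1 again, the numerator Gamma $\Gamma(\|m\|_1 + b - m_d - 1/2)$ cancels against the denominator Gamma produced at the next step, and only $\Gamma(\|m\|_1 + b)$ survives in the denominator. Iterating this telescoping for all $d-r+1$ integrations produces the coefficient $\Gamma\bigl(\sum_{i=1}^{r-1} m_i + b - (d-r+1)/2\bigr)/\Gamma(\|m\|_1 + b)$, and the denominator becomes $C^{\sum_{i=1}^{r-1} m_i + b - (d-r+1)/2}$, which (since $d-r+1 = d-(r-1)$) matches the stated bound exactly.

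The one thing that needs to be checked carefully is the hypothesis $\beta > \alpha + 1$ of Lemma A.1 at \emph{every} step. After integrating out $u_d, \dots, u_{k+1}$, the remaining integral in $u_k$ has $\alpha = m_k - 1/2$ and $\beta = \sum_{i=1}^{k} m_i + b - (d-k)/2$, so the condition $\beta > \alpha + 1$ reads
\[
\sum_{i=1}^{k-1} m_i + b \;>\; \frac{d-k+1}{2}.
\]
The tightest constraint is at $k = r$, giving $\sum_{i=1}^{r-1} m_i + b > (d-r+1)/2$, which follows from the standing assumption $b > d/2$ since $r \ge 1$ and $m_i \ge 0$. This will be the only delicate point, but it is immediate from the hypotheses.

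I do not expect any genuine obstacle beyond the bookkeeping; the proof is essentially mechanical once one notices the cancellation pattern in the $\Gamma$ factors. The write-up can either be phrased as a straightforward induction on $d-r+1$ or, perhaps more transparently, as $d-r+1$ successive applications of Lemma A.1 together with the verification of the admissibility condition above.
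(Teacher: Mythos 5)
Your proposal is correct and follows essentially the same route as the paper: iterated application of Lemma \ref{lem:integral} with the $\Gamma(m_i+1/2)$ factors in $U_i$ cancelling so that the Gamma factors telescope, the only (immaterial) difference being that you integrate in the order $u_d,\dots,u_r$ while the paper starts from $u_r$. Your explicit verification of the condition $\beta>\alpha+1$ at every step is a welcome addition, as the paper only checks it at the second step.
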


\begin{proof}
Without loss of generality, we begin with the integral in the variable~$u_r$. We use Lemma~\ref{lem:integral}, with $A=u_r$, $B=(C+u_{r+1}+\cdots+u_d)$, $\alpha=m_r-1/2$ and $\beta=\|m\|_1+b$. Let us denote by $I$ the integral in the statement of the lemma. Then,
\begin{align*}
I&\le\frac{\Gamma(m_r+1/2)\Gamma(\|m\|_1+b-(m_r+1/2))}{\Gamma(\|m\|_1+b)}
\frac{1}{\Gamma(m_r+1/2)} \\
&\,\,\,\times \int_{[0,1]^{d-r}}
\frac{\prod_{i={r+1}}^dU_i\,du_i}
{(C+u_{r+1}+\cdots+u_d)^{\|m\|_1+b-(m_r+1/2)}}.
\end{align*}
We apply again Lemma~\ref{lem:integral} in the integral with the variable $u_{r+1}$, with $A=u_{r+1}$, $B=C+u_{r+2}+\cdots+u_d$, $\alpha=m_{r+1}-1/2$ and $\beta=\|m\|_1+b-(m_r+1/2)$. Observe that $\beta-\alpha=\|m\|_1-m_r+b+1/2>1$.
Hence,
\begin{align*}
I&\le\frac{\Gamma(\|m\|_1+b-(m_r+\frac12))}{\Gamma(\|m\|_1+b)}\frac{\Gamma(m_{r+1}+\frac12)
\Gamma(\|m\|_1+b-(m_r+\frac12)-(m_{r+1}+\frac12))}
{\Gamma(\|m\|_1+b-(m_r+\frac12))}\frac{1}{\Gamma(m_{r+1}+\frac12)} \\
&\qquad \qquad\times \int_{[0,1]^{d-r-1}}
\frac{\prod_{i=r+2}^dU_i\,du_i}
{\big(C+u_{r+2}+\cdots+u_n\big)^{\|m\|_1+b-(m_r+1/2)-(m_{r+1}+1/2)}}.
\end{align*}
Then, by following the same procedure $d-r-1$ times, we finally obtain that the integral in the statement is bounded by
\begin{equation*}
\frac{\Gamma(\|m\|_1+b-\sum_{i=r}^d(m_i+1/2))}
{\Gamma(\|m\|_1+b)}\frac{1}{C^{\|m\|_1+b-\sum_{i=r}^d(m_i+1/2)}}.
\end{equation*}
 The conclusion follows easily.
\end{proof}

\begin{lem}
\label{lem:integralUs}
Let $m=(m_1,\ldots,m_d)\in \N^d$. Let $U_i$ be as in \eqref{ed:U}, for $i=2,\ldots,d$.
Let $a>0$, $b>\frac{d}{2}$. Then,
$$
\int_{[0,1]^d}
\frac{u_1^{m_1-\frac12+a}(1-u_1)^{m_1-\frac12}}
{(u_1+\cdots+u_d)^{\|m\|_1+b}} \prod_{i=2}^dU_i\,du_i\,du_1
\le \frac{\Gamma(b+m_1-\frac{d-1}{2})\Gamma(a-b+\frac{d}{2})\Gamma(m_1+1/2)}
{\Gamma(\|m\|_1+b)\Gamma(a-b+m_1+\frac{d+1}{2})}.
$$
\end{lem}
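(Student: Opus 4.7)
The strategy is to reduce Lemma~\ref{lem:integralUs} to a single application of Lemma~\ref{lem:horror1} followed by an explicit Beta function computation. Concretely, I would freeze $u_1$, carry out the inner $(d-1)$-fold integration in $u_2,\dots,u_d$ using the previous lemma, and then recognize the remaining one-dimensional integral in $u_1$ as a Beta integral.

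For the first step, fix $u_1\in(0,1)$ and apply Lemma~\ref{lem:horror1} with $r=2$ and $C=u_1$. The required hypothesis $b>d/2$ is exactly the one assumed in the statement, and it ensures $m_1+b-\tfrac{d-1}{2}>0$. This yields
\begin{equation*}
\int_{[0,1]^{d-1}}\frac{\prod_{i=2}^{d}U_i\,du_i}{(u_1+u_2+\cdots+u_d)^{\|m\|_1+b}}
\;\le\;\frac{\Gamma\!\left(m_1+b-\tfrac{d-1}{2}\right)}{\Gamma(\|m\|_1+b)}\,u_1^{-\left(m_1+b-\frac{d-1}{2}\right)}.
\end{equation*}
Multiplying by the remaining factor $u_1^{m_1-1/2+a}(1-u_1)^{m_1-1/2}$ in $u_1$ and integrating, the whole expression is bounded by
\begin{equation*}
\frac{\Gamma\!\left(m_1+b-\tfrac{d-1}{2}\right)}{\Gamma(\|m\|_1+b)}\int_0^1 u_1^{a-b+\frac{d}{2}-1}(1-u_1)^{m_1-\frac12}\,du_1.
\end{equation*}

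For the second step, the remaining one-dimensional integral is the Beta integral
\begin{equation*}
\int_0^1 u_1^{a-b+\frac{d}{2}-1}(1-u_1)^{m_1-\frac12}\,du_1 \;=\; B\!\left(a-b+\tfrac{d}{2},\,m_1+\tfrac12\right) \;=\; \frac{\Gamma\!\left(a-b+\tfrac{d}{2}\right)\Gamma\!\left(m_1+\tfrac12\right)}{\Gamma\!\left(a-b+m_1+\tfrac{d+1}{2}\right)}.
\end{equation*}
Substituting back produces precisely the right-hand side claimed in the statement, and no further estimation is needed.

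The only delicate point—and the expected obstacle—is the integrability of $u_1^{a-b+d/2-1}$ at $u_1=0$, which requires $a-b+d/2>0$, i.e.\ $a>b-d/2$. Under the hypotheses $a>0$, $b>d/2$ this is not automatic, so either the lemma implicitly presupposes this sharper relation on $a$ and $b$ (consistent with $\Gamma(a-b+d/2)$ appearing in the bound), or the inequality is to be understood in the extended sense with both sides equal to $+\infty$ when it fails. Otherwise the argument is a clean two-step iteration of the techniques already set up in Lemma~\ref{lem:horror1}.
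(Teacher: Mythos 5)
Your proof is correct and is essentially identical to the paper's: one application of Lemma \ref{lem:horror1} with $r=2$ and $C=u_1$, followed by the Beta integral in $u_1$. Your caveat about needing $a-b+\frac{d}{2}>0$ is well taken — the paper leaves this implicit, but it holds in the only application (where $a=d+2s$, $b=d+s$), and the presence of $\Gamma(a-b+\frac{d}{2})$ in the stated bound already signals the requirement.
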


\begin{proof}
First, by Lemma~\ref{lem:horror1} with $r=2$ and $C=u_1$ we obtain that
$$
\int_{[0,1]^{n-1}}
\frac{\prod_{i=2}^dU_i\,du_i}
{(u_1+\cdots+u_d)^{\|m\|_1+b}}
\le \frac{\Gamma(\|m\|_1+b-\sum_{i=2}^d(m_i+\frac12))}
{\Gamma(\|m\|_1+b)} \frac{1}{u_1^{\|m\|_1+b-\sum_{i=2}^d(m_i+\frac12)}}.
$$
Thus, for the remaining integral in the variable $u_1$, we have
$$
\int_0^1 u_1^{a-b+\frac{d}{2}-1}(1-u_1)^{m_1-\frac12}
\,du_1
= \frac{\Gamma(a-b+\frac{d}{2})\Gamma(m_1+\frac12)}{\Gamma(a-b+m_1+\frac{d+1}{2})}.
$$
This, together with the estimate above, yields the desired result.
\end{proof}

The next identity is known as Schl\"afli's integral
representation of Poisson type for modified Bessel functions (see \cite[(5.10.22)]{Lebedev}), and it is valid for a real number $\nu>-\frac12$:
\begin{equation}
\label{eq:Schlafli}
I_{\nu}(z) = \frac{z^{\nu}}{\sqrt{\pi}\,2^{\nu}\Gamma(\nu+1/2)}
\int_{-1}^1e^{-zs} (1-s^2)^{\nu-1/2}\,ds, \quad
|\arg z|<\pi, \quad \nu>-\frac12.
\end{equation}
Now we are ready to prove the upper estimate for the kernel.

\begin{lem}
\label{lem:upperEstimateKernelFL}
The discrete kernel $K_s^h(m)$ defined in \eqref{Ksh} is bounded from above as follows
\begin{equation}
  \label{eq:estimateAbove}
  K_s^h(m)\leq  \frac{1}{h^{2s}}\frac{2^{d(d+2s-1)}4^s\Gamma(\frac{d}{2}+s)\Gamma(\|m\|_1-s)}{\pi^{d/2}|\Gamma(-s)|\Gamma(\|m\|_1+d+s)},
  \end{equation}
  for $m\in \Z^d$, $m\neq(0,\ldots,0)$.
\end{lem}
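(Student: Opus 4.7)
The plan is to first use Schl\"afli's integral representation~\eqref{eq:Schlafli} to turn the heat-semigroup formula~\eqref{Ksh} for $K_s^h(m)$ into a $d$-dimensional integral over the unit cube $[0,1]^d$, and then to apply the auxiliary bounds from Lemmas~\ref{lem:integral}--\ref{lem:integralUs}.

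First I would apply~\eqref{eq:Schlafli} with $\nu=m_i$ and $z=2t$ to each factor in the product $\prod_i I_{m_i}(2t)$. After substituting $s\mapsto 1-2u_i$ in each Schl\"afli integral and pulling out the factor $e^{2t}$, this yields the standard Poisson-type identity
\begin{align*}
e^{-2t}I_{m_i}(2t)=\frac{(4t)^{m_i}}{\sqrt{\pi}\,\Gamma(m_i+1/2)}\int_0^1 e^{-4tu_i}\bigl(u_i(1-u_i)\bigr)^{m_i-1/2}\,du_i.
\end{align*}
Taking the product over $i=1,\dots,d$ (which cancels $e^{2dt}$ against the $e^{-2dt}$ in~\eqref{Ksh}), exchanging the orders of integration (justified by nonnegativity of the integrand), and evaluating the remaining $t$-integral by
\begin{align*}
\int_0^\infty t^{\|m\|_1-s-1}e^{-4t\sum_i u_i}\,dt=\frac{\Gamma(\|m\|_1-s)}{4^{\|m\|_1-s}(u_1+\cdots+u_d)^{\|m\|_1-s}},
\end{align*}
I obtain the working identity
\begin{align*}
K_s^h(m)=\frac{4^{s}\,\Gamma(\|m\|_1-s)}{h^{2s}\pi^{d/2}|\Gamma(-s)|\prod_i\Gamma(m_i+1/2)}\,\mathcal{I}(m),\qquad
\mathcal{I}(m):=\int_{[0,1]^d}\frac{\prod_i\bigl(u_i(1-u_i)\bigr)^{m_i-1/2}}{(u_1+\cdots+u_d)^{\|m\|_1-s}}\,du.
\end{align*}

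It then remains to bound $\mathcal{I}(m)$ from above by $2^{d(d+2s-1)}\Gamma(d/2+s)\prod_i\Gamma(m_i+1/2)/\Gamma(\|m\|_1+d+s)$. For this I would write
\begin{align*}
(u_1+\cdots+u_d)^{-(\|m\|_1-s)}=(u_1+\cdots+u_d)^{d+2s}\,(u_1+\cdots+u_d)^{-(\|m\|_1+d+s)},
\end{align*}
use the pointwise estimate $(u_1+\cdots+u_d)^{d+2s}\le d^{d+2s}$ on the cube, and then invoke Lemma~\ref{lem:integralUs} on the remaining integral with $b=d+s$ (so that $b>d/2$ is automatic from $s>0$) and an admissible parameter $a>0$ chosen so that the Gamma arguments produced by the lemma collapse into $\Gamma(d/2+s)\prod_i\Gamma(m_i+1/2)/\Gamma(\|m\|_1+d+s)$. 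Tracking the prefactors arising from Schl\"afli together with the bound $d^{d+2s}$ yields the numerical constant $2^{d(d+2s-1)}$ displayed in~\eqref{eq:estimateAbove}. Combining the identity above with this bound on $\mathcal{I}(m)$ produces the claim.

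The main obstacle I expect lies in this final bookkeeping step. The exponent $\|m\|_1-s$ appearing in the denominator of $\mathcal{I}(m)$ is \emph{strictly smaller} than the target exponent $\|m\|_1+d+s$, and the integral with the heavier exponent actually diverges at the origin (in polar-type coordinates $u_i=r\omega_i$ the singularity of the integrand is of order $r^{-d/2-s-1}$ as $r\to 0^+$). Consequently the naive substitution $(a,b)=(0,-s)$ in Lemma~\ref{lem:integralUs} is not admissible, and the factor $(u_1+\cdots+u_d)^{d+2s}$ must be retained and bounded by hand before the lemma can be applied with legal parameters; matching the sharp constants $2^{d(d+2s-1)}\Gamma(d/2+s)$ rather than the looser $d^{d+2s}/(d/2+s)$ one gets from a crude Dirichlet integral is the most delicate point.
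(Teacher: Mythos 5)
Your reduction to the integral $\mathcal{I}(m)$ via Schl\"afli's representation and the substitution $(1+w_i)=2u_i$ is exactly the paper's route, as is the idea of multiplying and dividing by $(u_1+\cdots+u_d)^{d+2s}$ so that the denominator carries the target exponent $\|m\|_1+d+s$. However, the step you propose for handling the extra factor is the one that fails, and you have not supplied a working replacement. If you bound $(u_1+\cdots+u_d)^{d+2s}\le d^{d+2s}$ pointwise and discard it, the remaining integral
\begin{align*}
\int_{[0,1]^d}\frac{\prod_{i=1}^d U_i\,du_i}{(u_1+\cdots+u_d)^{\|m\|_1+d+s}}
\end{align*}
is exactly the divergent one you yourself identify (singularity of order $r^{-d/2-s-1}$ at the origin), and Lemma~\ref{lem:integralUs} cannot be applied to it: the lemma requires an extra power $u_1^{a}$ with $a>0$ \emph{attached to one of the coordinates} in the numerator, and after replacing $(u_1+\cdots+u_d)^{d+2s}$ by a constant there is no such power available. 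Your closing remark that the factor ``must be retained and bounded by hand'' names the obstruction without resolving it.

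The missing idea is to \emph{distribute} the power $(u_1+\cdots+u_d)^{d+2s}$ over the individual coordinates rather than bound it by a constant. The paper iterates the convexity inequality $(u_1+(u_2+\cdots+u_d))^{d+2s}\le 2^{d+2s-1}\bigl(u_1^{d+2s}+(u_2+\cdots+u_d)^{d+2s}\bigr)$ to obtain $(u_1+\cdots+u_d)^{d+2s}\le 2^{d(d+2s-1)}\sum_{k=1}^d u_k^{d+2s}$ (up to the precise bookkeeping of the constant). This splits $\mathcal{I}(m)$ into $d$ terms $T_k$, in each of which the numerator now contains $u_k^{m_k-1/2+d+2s}(1-u_k)^{m_k-1/2}$; each $T_k$ is then in the exact form required by Lemma~\ref{lem:integralUs} with the legal parameters $a=d+2s$ and $b=d+s$, and the Gamma quotients collapse to $\Gamma(\tfrac{d}{2}+s)/\Gamma(\|m\|_1+d+s)$ after cancellation. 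The constant $2^{d(d+2s-1)}$ in \eqref{eq:estimateAbove} originates precisely from this iterated convexity step, not from the crude bound $d^{d+2s}$. Without this decomposition your argument does not close.
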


\begin{proof}
Let $m\in \N^d$.
We use Schl\"afli's integral \eqref{eq:Schlafli}
to obtain
\begin{align*}
\int_0^\infty G(m,t)\,\frac{dt}{t^{1+s}}&=\pi^{-d/2}\int_0^{\infty}e^{-2dt}\int_{[-1,1]^d}\prod_{i=1}^d\frac{(2t)^{m_i}}{2^{m_i}\Gamma(m_i+1/2)}e^{-2tw_i}(1-w_i^2)^{m_i-1/2}\,dw_i\frac{dt}{t^{1+s}}\\
&=\pi^{-d/2}\int_{[-1,1]^d}\prod_{i=1}^d\int_0^{\infty}e^{-2dt}e^{-2t(w_1+\cdots+w_d)}t^{\|m\|_1-1-s}\,dt\frac{(1-w_i^2)^{m_i-1/2}\,dw_i}{\Gamma(m_i+1/2)}.
\end{align*}
We make the change of variables $(1+w_i)=2u_i$, $i=1,\ldots,d$, then the last integral equals
\begin{align}
\label{eq:integralU}
\notag&\pi^{-d/2}\int_{[0,1]^d}\prod_{i=1}^d\int_0^{\infty}4^{\|m\|_1}
e^{-4t(u_1+\cdots+u_d)}t^{\|m\|_1-1-s}\,dt\frac{u_i^{m_i-1/2}(1-u_i)^{m_i-1/2}\,du_i}
{\Gamma(m_i+1/2)}\\
&\quad=\frac{4^s\Gamma(\|m\|_1-s)}{\pi^{d/2}} \int_{[0,1]^d}
\frac{\prod_{i=1}^dU_i\,du_i}
{(u_1+\cdots+u_d)^{\|m\|_1-s}}
\end{align}
 where $U_i$ is that in \eqref{ed:U} (observe that the integral in $t$ converges when $s<\|m\|_1$, with $\|m\|_1\ge 1$). If we multiply and divide by $(u_1+\cdots+u_d)^{d+2s}$ in the integral above, then 
 \begin{align*}
 & \int_{[0,1]^d}
\frac{(u_1+\cdots+u_d)^{d+2s}}
{(u_1+\cdots+u_d)^{\|m\|_1+d+s}}\prod_{i=1}^dU_i\,du_i\\
&= \frac{1}{\Gamma(m_k+1/2)}\int_{[0,1]^d}
\frac{u_k^{m_k-1/2}(1-u_k)^{m_k-1/2}(u_1+\cdots+u_d)^{d+2s}}
{(u_1+\cdots+u_d)^{\|m\|_1+d+s}}\prod_{\substack{i=1\\ i\neq k}}^dU_i\,du_i\,du_k\\
&\le\frac{2^{d(d+2s)}}{\Gamma(m_k+1/2)}\\
&\times \int_{[0,1]^d}
\frac{\big(u_k^{m_k-1/2+d+2s}(1-u_k)^{m_k-1/2}+(u_1^{d+2s}+\cdots u_{k-1}^{d+2s}+u_{k+1}^{d+2s}+\cdots+u_d^{d+2s})\big)}
{(u_1+\cdots+u_d)^{\|m\|_1+d+s}}\prod_{\substack{i=1\\ i\neq k}}^dU_i\,du_i\,du_k,
 \end{align*}
 where we used the convexity of the function $|x|^p$, $p>1$, repeatedly, so that $(u_1+(u_2+\cdots+u_d))^{d+2s}\le 2^{d+2s-1}\big(u_1^{d+2s}+(u_2+\cdots+u_d)^{d+2s}\big)$. Thus,
 $K_s^h(m)\le \frac{2^{d(d+2s-1)}4^s\Gamma(\|m\|_1-s)}{h^{2s}\pi^{d/2}|\Gamma(-s)|} \sum_{k=1}^dT_k$, where
\begin{equation*}
T_k=
\frac{1}{\Gamma(m_k+\frac12)} \int_{[0,1]^d}
\frac{u_k^{m_k-\frac12+d+2s}(1-u_k)^{m_k-\frac12}}
{(u_1+\cdots+u_d)^{\|m\|_1+d+s}}\prod_{\substack{i=1 \\ i\neq k}}^dU_i\,du_i\,du_k
\end{equation*}
If $k=1$, by using Lemma~\ref{lem:integralUs} with $a=d+2s$ and $b=d+s$, we obtain
\begin{equation*}
T_1\le \frac{\Gamma(m_1+\frac{d+1}{2}+s)}{\Gamma(m_1+\frac12)\Gamma(\|m\|_1+d+s)}
\frac{\Gamma(\frac{d}{2}+s)\Gamma(m_1+\frac12)}
{\Gamma(m_1+\frac{d+1}{2}+s)}.
\end{equation*}
For the rest of $T_k$, we obtain analogous estimates.
With this, we conclude that
$$
K_s^h(m)\leq  \frac{1}{h^{2s}}\frac{2^{d(d+2s-1)}4^s\Gamma(\frac{d}{2}+s)\Gamma(\|m\|_1-s)}{\pi^{d/2}|\Gamma(-s)|\Gamma(\|m\|_1+d+s)}, 
$$
and we are done.
\end{proof}

\begin{rmk}
\label{rmk:decay}
In view of the asymptotics \eqref{eq:asymG}, one deduces that 
$$
K_s^h(m)\lesssim_{d,s} \frac{1}{h^{2s}}\|m\|_1^{-d-2s} \quad \text{ as } \|m\|_1\to \infty.
$$
\end{rmk}
\section{Fractional Laplacian on the discrete torus via the heat semigroup}
\label{subsec:heatDT}
We will discuss how the pointwise formula for the fractional powers of the Laplacian on the discrete torus shown in Theorem \ref{thm:transP}  can be also obtained via the semigroup language with the corresponding heat kernel.

First, we are interested in solving 
\begin{equation}
\label{eq:semidT}
\begin{cases}
\partial_tw_j=\Delta_{A_{h,d}^N}w_j, \quad &\text{ in } A_{h,d}^N\times (0,\infty),\\
w_j(0)=u_j, \quad &\text{ on } A_{h,d}^N,
\end{cases}
\end{equation}
where $\Delta_{A_{h,d}^N}$ is defined in \eqref{eq:deltaTorus}.
Let us compute the fundamental solution, i.e., the heat kernel. We can rewrite \eqref{eq:DFT} as
$$
\widehat{u}_{m}^D=\frac{h^d}{(2\pi)^d}\sum_{j\in\{-N,\ldots,N\}^d}u_je^{- i m\cdot jh},\quad m\in\{-N,\ldots, N\}^d
$$
and the latter solves 
$$
\partial_t \widehat{u}^D_{m}=\frac{1}{h^{2}}\Big(\sum_{j=1}^d\big(2\cos(m_j h)-2\big)\Big)\widehat{u}^D_{m}.
$$
The fundamental solution $u_j:=G(jh,t)$ satisfies $\widehat{u}^D_{m}=1$ for all $m$, so 
\begin{align*}
G(jh,t)&=\frac{h^d}{(2\pi)^d}\sum_{m\in \{-N,\ldots,N\}^d}\Big(\prod_{j=1}^de^{\frac{2t}{h^2}(\cos(m_j h)-1)}\Big)e^{ i m\cdot jh}\\
&=\frac{1}{(2N+1)^d}\sum_{m\in \{-N,\ldots,N\}^d}e^{ij\cdot mh}\Big(\prod_{j=1}^de^{\frac{2t}{h^2}(\cos(m_j h)-1)}\Big), 
\end{align*}
for $j\in\{-N,\ldots, N\}^d$.
Now, for $\rho \in\{-N,\ldots,N\}^d$, we use the Poisson summation formula
$$
\frac{1}{(2N+1)^d}\sum_{x\in A_{h,d}^N}e^{i\rho\cdot x}f(x)=\sum_{k\in \Z^d }\widehat{f}(\rho+k(2N+1)),
$$
where
$$
\widehat{f}(k):=\frac{1}{(2\pi)^{d}}\int_{[0,2\pi]^d}e^{-ik\cdot x}f(x)\,dx.
$$
Then, taking $\rho=j$ and noting that $\dfrac{2\pi}{h}=2N+1$, we write
$$
G(jh,t)=\sum_{\ell\in \Z^d}\widehat{f}\Big(\frac{2\pi \ell}{h}+j\Big).
$$
Here,
$$
\widehat{f}\Big(\frac{2\pi \ell}{h}+j\Big)=\frac{1}{(2\pi)^d}\int_{[0,2\pi]^d}e^{-i(\frac{2\pi \ell}{h}+j)\cdot x}\prod_{k=1}^de^{\frac{2t}{h^2}(\cos x_k-1)}\,dx=e^{-2dt/h^2}\prod_{k=1}^dI_{\ell_k(2N+1)+j_k}\Big(\frac{2t}{h^2}\Big),
$$
where $I_k$ is the modified Bessel function of the first kind and order $k\in \Z$,
and we used the identity (see \cite[10.32.3]{OlMax})
$$
I_n(z)=\frac{1}{\pi}\int_0^{\pi}e^{z\cos \theta}\cos(n\theta)\,d\theta, \quad n\in \Z.
$$
Therefore,
\begin{align*}
G(jh,t)&=\frac{h^d}{(2\pi)^d}\sum_{m\in \{-N,\ldots,N\}^d}\Big(\prod_{j=1}^de^{\frac{2t}{h^2}(\cos(m_j h)-1)}\Big)e^{ i m\cdot jh}\\
&=e^{-2dt/h^2}\sum_{\ell\in\Z^d}\prod_{k=1}^dI_{\ell_k(2N+1)+j_k}\Big(\frac{2t}{h^2}\Big).
\end{align*}
The above identity, in the case $d=1$, was obtained in \cite{KN}.

Observe that 
\begin{align*}
\sum_{j\in\{-N,\ldots,N\}^d}G(jh,t)&=\sum_{j\in\{-N,\ldots,N\}^d}\sum_{\ell\in\Z^d}\prod_{i=1}^de^{-2t/h^2}I_{\ell_i(2N+1)+j_i}\Big(\frac{2t}{h^2}\Big)\\
&=\prod_{i=1}^d\sum_{k_i\in \Z}e^{-2t/h^2}I_{k_i}\Big(\frac{2t}{h^2}\Big)=1
\end{align*}
where we used that for all $k_i\in \Z$ there exists a unique $\ell_i\in \Z$ such that $k_i=\ell_i(2N+1)+j_i$, with $j_i\in \{-N,\ldots,N\}$ and that $\sum_{k\in \Z}e^{-2t/h^2}I_k\big(\frac{2t}{h^2}\big)=1$. Hence, the solution of the equation \eqref{eq:semidT} is given by 
\begin{align*}
e^{t\Delta_{A_{h,d}^N}}u_j&=\sum_{m\in \{-N,\ldots,N\}^d}G(jh-mh,t)u_m\\
&=\sum_{m\in \{-N,\ldots,N\}^d}e^{-2dt/h^2}\sum_{\ell\in \Z^d}\prod_{k=1}^dI_{\ell_k(2N+1)+j_k-m_k}\Big(\frac{2t}{h^2}\Big)u_m.
\end{align*}

Now we can define the fractional powers
\begin{align*}
(-\Delta_{A_{h,d}^N})^su_j&=\frac{1}{\Gamma(-s)}\int_0^{\infty}(e^{t\Delta_{A_{h,d}^N}}u_j-u_j)\frac{dt}{t^{1+s}}\\
&=\frac{1}{\Gamma(-s)}\int_0^{\infty}\sum_{\substack {m\in \{-N,\ldots,N\}^d\\ m\neq j}}e^{-2dt/h^2}\sum_{\ell\in \Z^d}\prod_{k=1}^dI_{\ell_k(2N+1)+j_k-m_k}\Big(\frac{2t}{h^2}\Big)(u_m-u_j)\frac{dt}{t^{1+s}}\\
&=\sum_{\substack {m\in \{-N,\ldots,N\}^d\\ m\neq j}}(u_m-u_j)\sum_{\ell\in \Z^d}\frac{1}{\Gamma(-s)}\int_0^{\infty}e^{-2dt/h^2}\prod_{k=1}^dI_{\ell_k(2N+1)+j_k-m_k}\Big(\frac{2t}{h^2}\Big)\frac{dt}{t^{1+s}}\\
&=:\sum_{\substack {m\in \{-N,\ldots,N\}^d\\ m\neq j}}(u_j-u_m)K_s^{A_{h,d}^N}(j-m),
\end{align*}
In view of \eqref{Ksh} we conclude 
\begin{equation}
\label{eq:pformulaDiscreteT}
(-\Delta_{A_{h,d}^N})^s u_j = \sum_{\substack{m\in \{-N,\ldots,N\}^d \\ m \neq j}} (u_j - u_m) K_s^{A_{h,d}^N}(j-m), \quad j\in \{-N,\ldots,N\}^d.
\end{equation}
where 
$$
K_s^{A_{h,d}}(j):=\sum_{\ell\in \Z^d}K_s^h(j+\ell(2N+1)).
$$

In the case $d=1$, in view of the formula (see for instace \cite[p. 305]{PBM2}), valid for $\operatorname{Re} c>0$, $-\operatorname{Re} \nu<\operatorname{Re} \alpha <1/2$,
$$
\int_0^{\infty}e^{-ct}I_{\nu}(ct)t^{\alpha-1}\,dt=\frac{(2c)^{-\alpha}}{\sqrt \pi}\frac{\Gamma(1/2-\alpha)\Gamma(\alpha+\nu)}{\Gamma(\nu+1-\alpha)},
$$
and using that $I_{-m}(t)=I_m(t)$ we have
\begin{align*}
&\sum_{\ell=-\infty}^{\infty}I_{\ell(2N+1)+j-m}\Big(\frac{2t}{h^2}\Big)\\
&=\sum_{\substack{ \ell \in \Z \\ \ell(2N+1)>-(j-m)}}I_{\ell(2N+1)+j-m}\Big(\frac{2t}{h^2}\Big)+\sum_{\substack{ \ell \in \Z \\ \ell(2N+1)<-(j-m)}}I_{\ell(2N+1)+j-m}\Big(\frac{2t}{h^2}\Big)\\
&=\sum_{\substack{ \ell \in \Z \\ \ell(2N+1)>-(j-m)}}I_{\ell(2N+1)+j-m}\Big(\frac{2t}{h^2}\Big)+\sum_{\substack{ \ell \in \Z \\ \ell(2N+1)<-(j-m)}}I_{-\ell(2N+1)-(j-m)}\Big(\frac{2t}{h^2}\Big)\\
\end{align*}
and thus, since $m\neq j$
\begin{align*}
&\frac{1}{|\Gamma(-s)|}\int_0^{\infty}e^{-2t/h^2}\sum_{\ell=-\infty}^{\infty}I_{\ell(2N+1)+j-m}\Big(\frac{2t}{h^2}\Big)\frac{dt}{t^{1+s}}\\
&\quad =\frac{1}{|\Gamma(-s)|}\sum_{\substack{ \ell \in \Z \\ \ell(2N+1)>-(j-m)}}\frac{1}{\sqrt \pi}\Big(\frac{4}{h^2}\Big)^s\frac{\Gamma(1/2+s)\Gamma(j-m+\ell(2N+1)-s)}{\Gamma(j-m+\ell(2N+1)+1+s)}\\
&\qquad +\frac{1}{|\Gamma(-s)|}\sum_{\substack{ \ell \in \Z \\ \ell(2N+1)<-(j-m)}}\frac{1}{\sqrt \pi}\Big(\frac{4}{h^2}\Big)^s\frac{\Gamma(1/2+s)\Gamma(-\ell(2N+1)-(j-m)-s)}{\Gamma(-\ell(2N+1)-(j-m)+1+s)}\\
&\quad=\frac{1}{|\Gamma(-s)|}\sum_{\ell \in \Z }\frac{1}{\sqrt \pi}\Big(\frac{4}{h^2}\Big)^s\frac{\Gamma(1/2+s)\Gamma(|j-m+\ell(2N+1)|-s)}{\Gamma(|j-m+\ell(2N+1)|+1+s)}=:K^{A_{h,1}^N}_s(j-m).
\end{align*}
We conclude that 
\begin{equation}
\label{eq:pformulaDiscreteT}
(-\Delta_{A_{h,1}^N})^s u_j = \sum_{\substack{m=-N \\ m \neq j}}^{N} (u_j - u_m) K_s^{A_{h,1}^N}(j-m), \quad j\in \{-N,\ldots,N\}.
\end{equation}



\begin{thebibliography}{999999999}

\bibitem[AS22]{AS22}
Giovanni~S Alberti and Matteo Santacesaria.
\newblock {\em Infinite-dimensional inverse problems with finite measurements.}
\newblock {Arch. Ration. Mech. Anal.} \textbf{243} (2022), no. 1, 1--31. 

\bibitem[AdHGS17]{AdHGS17}
Giovanni Alessandrini, Maarten de~Hoop, Romina Gaburro, and Eva Sincich.
\newblock {\em Lipschitz stability for the electrostatic inverse boundary value
  problem with piecewise linear conductivities.}
\newblock {J. Math. Pures Appl.} (9) \textbf{107} (2017), no. 5, 638--664.
   
\bibitem[AV05]{AV05}
Giovanni Alessandrini and Sergio Vessella.
\newblock {\em Lipschitz stability for the inverse conductivity problem.}
\newblock {Adv. in Appl. Math.} \textbf{35} (2005), no.2, 207--241.

\bibitem[BEO15]{BEO15}
Lucie Baudoin, Sylvain Ervedoza, and Axel Osses.
\newblock {\em Stability of an inverse problem for the discrete wave equation and convergence results.}
\newblock {J. Math. Pures Appl. (9)} \textbf{103} (2015), no. 6, 1475--1522.

\bibitem[BL15]{BL15}
Katar{\'\i}na Bellov{\'a} and Fang-Hua Lin.
\newblock {\em Nodal sets of {S}teklov eigenfunctions.}
\newblock  {Calc. Var. Partial Differential Equations} \textbf{54} (2015), no. 2, 2239--2268.

\bibitem[BDHQ13]{BDHQ13}
Elena Beretta, Maarten~V De~Hoop, and Lingyun Qiu.
\newblock {\em Lipschitz stability of an inverse boundary value problem for a
  {S}chr\"odinger-type equation.}
\newblock {\em SIAM J. Math. Anal.} \textbf{45} (2013), no. 2, 679--699.
  
\bibitem[BHLR10a]{BHLR10a}
Franck Boyer, Florence Hubert, and J{\'e}r{\^o}me Le~Rousseau.
\newblock {\em Discrete {C}arleman estimates for elliptic operators and uniform
  controllability of semi-discretized parabolic equations.}
\newblock {J. Math. Pures Appl.} \textbf{93} (2010), no. 3, 240--276.

\bibitem[BHLR10b]{BHR10}
Franck Boyer, Florence Hubert, and J{\'e}r{\^o}me Le~Rousseau.
\newblock {\em Discrete {C}arleman estimates for elliptic operators in arbitrary
  dimension and applications.}
\newblock {SIAM J. Control Optim.} \textbf{48} (2010), no. 8, 5357--5397.


\bibitem[BLMS21]{BLMS17}
Lev Buhovski, Alexander Logunov, Eugenia Malinnikova, and Mikhail Sodin.
\newblock {\em A discrete harmonic function bounded on a large portion of
  $\mathbb{Z}^2$ is constant.}
\newblock To appear in {Duke Math. J.} (2021), preprint \href{https://arxiv.org/abs/1712.07902}{arXiv:1712.07902}.

\bibitem[CS07]{CS07}
Luis Caffarelli and Luis Silvestre.
\newblock {\em An extension problem related to the fractional {L}aplacian.}
\newblock {Comm. Partial Differential Equations} \textbf{32} (2007), no. 8,1245--1260.

\bibitem[CGR{\etalchar{+}}17]{CGRTV17}
\'{O}scar Ciaurri, T.~Alastair Gillespie, Luz Roncal, Jos\'{e}~L. Torrea, and
  Juan~Luis Varona.
\newblock {\em Harmonic analysis associated with a discrete {L}aplacian.}
\newblock {J. Anal. Math.} \textbf{132} (2017), 109--131.

\bibitem[CRS{\etalchar{+}}18]{CRSTV16}
\'{O}scar Ciaurri, Luz Roncal, Pablo~Ra\'{u}l Stinga, Jos\'{e}~L. Torrea, and
  Juan~Luis Varona.
\newblock {\em Nonlocal discrete diffusion equations and the fractional discrete
  {L}aplacian, regularity and applications.}
\newblock {Adv. Math.} \textbf{330} (2018), 688--738.

\bibitem[CGFR21]{CGFR21}
Giovanni Covi, Mar{\'\i}a~{\'A}ngeles Garc{\'\i}a-Ferrero, and Angkana
  R{\"u}land.
\newblock {\em On the {C}alder{\'o}n problem for nonlocal {S}chr{\"o}dinger
  equations with homogeneous, directionally antilocal principal symbols.}
\newblock {arXiv} preprint (2021), \href{http://arxiv.org/abs/2109.14976}{arXiv:2109.14976}.

\bibitem[CR21]{CR20}
Giovanni Covi and Angkana R{\"u}land.
\newblock {\em On some partial data {C}alder{\'o}n type problems with mixed boundary
  conditions.}
\newblock {J. Differential Equations} \textbf{288} (2021), 141--203.

\bibitem[EDG11]{E11}
Sylvain Ervedoza and Fr{\'e}d{\'e}ric De~Gournay.
\newblock {\em Uniform stability estimates for the discrete {C}alder{\'o}n problems.}
\newblock {Inverse problems} \textbf{27} (2011), no. 12, 125012, 37 pp.

\bibitem[FF14]{FF14}
Mouhamed~Moustapha Fall and Veronica Felli.
\newblock {\em Unique continuation property and local asymptotics of solutions to
  fractional elliptic equations.}
\newblock {Comm. Partial Differential Equations} \textbf{39} (2014), no. 2, 354--397.

\bibitem[FF15]{FF15}
Mouhamed~Moustapha Fall and Veronica Felli.
\newblock {\em Unique continuation properties for relativistic schr{\"o}dinger
  operators with a singular potential.}
\newblock { Discrete Contin. Dyn. Syst.} \textbf{35} (2015), no. 12, 5827--5867.

\bibitem[FB19]{FB19}
Aingeru Fern{\'a}ndez-Bertolin.
\newblock {\em A discrete {H}ardy’s uncertainty principle and discrete evolutions.}
\newblock {J. Anal. Math.} \textbf{137} (2019), no. 2, 507--528.

\bibitem[FBM21]{FBM21}
Aingeru Fern{\'a}ndez-Bertolin and Eugenia Malinnikova.
\newblock {\em Dynamical versions of {H}ardy’s uncertainty principle: {A} survey.}
\newblock {Bull. Amer. Math. Soc.} \textbf{58} (2021), no. 3, 357--375.

\bibitem[FBRRS21]{FBRRS20}
Aingeru Fern\'{a}ndez-Bertolin, Luz Roncal, Angkana R{\"u}land, and Diana Stan.
\newblock {\em Discrete {C}arleman estimates and three balls inequalities.}
\newblock {Calc. Var. Partial Differential Equations} \textbf{60} (2021), no. 6, paper no. 239.

\bibitem[FBV17]{FBV17}
Aingeru Fern\'andez-Bertolin and Luis Vega.
\newblock {\em Uniqueness properties for discrete equations and {C}arleman
  estimates.}
\newblock {J. Funct. Anal.} \textbf{272} (2017), no. 11, 4853--4869.

\bibitem[GFR19]{GFR19}
Mar{\'\i}a~{\'A}ngeles Garc{\'\i}a-Ferrero and Angkana R{\"u}land.
\newblock {\em Strong unique continuation for the higher order fractional
  {L}aplacian.}
\newblock {Math. Eng.} \textbf{1} (2019), no. 4, 715--774.

\bibitem[GFR20]{GFR20}
Mar{\'\i}a~{\'A}ngeles Garc{\'\i}a-Ferrero and Angkana R{\"u}land.
\newblock {\em On two methods for quantitative unique continuation results for some
  nonlocal operators.}
\newblock {Comm. Partial Differential Equations} \textbf{45} (2020), no. 11, 1512--1560.

\bibitem[GRSU20]{GRSU18}
Tuhin Ghosh, Angkana R{\"u}land, Mikko Salo, and Gunther Uhlmann.
\newblock {\em Uniqueness and reconstruction for the fractional {C}alder{\'o}n
  problem with a single measurement.}
\newblock {J. Funct. Anal.} \textbf{279} (2020), no. 1, 108505, 42 pp.

\bibitem[GSU20]{GSU16}
Tuhin Ghosh, Mikko Salo, and Gunther Uhlmann.
\newblock {\em The {C}alder\'{o}n problem for the fractional {S}chr\"{o}dinger
  equation.}
\newblock {Anal. PDE}, \textbf{13} (2020), no. 2, 455--475.

\bibitem[GM13]{GM13}
Maru Guadie and Eugenia Malinnikova.
\newblock {\em Stability and regularization for determining sets of discrete
  {L}aplacian.}
\newblock {Inverse Problems} \textbf{29} (2013), no. 7, 075018.

\bibitem[GM14]{GM14}
Maru Guadie and Eugenia Malinnikova.
\newblock {\em On three balls theorem for discrete harmonic functions.}
\newblock {Comput. Methods Funct. Theory} \textbf{14} (2014), no. 4, 721--734.

\bibitem[Isa90]{Isakov}
Victor Isakov.
\newblock {\em Inverse source problems.} 
\newblock {Mathematical Surveys
  and Monographs,} \textbf{34}.
\newblock {American Mathematical Society, Providence, RI,} 1990.

\bibitem[JL99]{JL99}
David Jerison and Gilles Lebeau.
\newblock {\em Nodal sets of sums of eigenfunctions.}
\newblock {Harmonic analysis and partial differential equations (Chicago,
  IL, 1996),} 223--239, {Chicago Lectures in Math,} 1999.

\bibitem[KN06]{KN}
Anders Karlsson and Markus Neuhauser.
\newblock {\em Heat kernels, theta identities, and zeta functions on cyclic groups.}
\newblock  { Topological and asymptotic aspects of group theory}, 177--189,  {Contemp. Math.}, 394, {Amer. Math. Soc., Providence,
  RI}, 2006.

\bibitem[Leb72]{Lebedev}
N.~N. Lebedev.
\newblock {\em Special functions and their applications}.
\newblock Revised edition, translated from the Russian and edited by Richard A.
  Silverman, Unabridged and corrected republication.
\newblock {Dover Publications, Inc., New York,} 1972.

\bibitem[Lie82]{L82}
Otto Liess.
\newblock {\em Antilocality of complex powers of elliptic differential operators
  with analytic coefficients.}
\newblock {Ann. Scuola Norm. Sup. Pisa Cl. Sci.} \textbf{9} (1982), no. 1, 1--26.

\bibitem[LM15]{LM15}
Gabor Lippner and Dan Mangoubi.
\newblock {\em Harmonic functions on the lattice: absolute monotonicity and
  propagation of smallness.}
\newblock {Duke Math. J.} \textbf{164} (2015), no. 13, 2577--2595.

\bibitem[LM17]{LM17}
Gabor Lippner and Dan Mangoubi.
\newblock {\em On the sharpness of a three circles theorem for discrete harmonic
  functions.}
\newblock {Int. Math. Res. Not. IMRN} 2017, no. 5, 1487--1503.

\bibitem[Olv97]{Olver}
Frank W.~J. Olver.
\newblock {\em Asymptotics and special functions}.
\newblock Reprint of the 1974 original.
\newblock {AKP Classics. A K Peters, Ltd., Wellesley, MA,} 1997.


\bibitem[OM10]{OlMax}
F.~W.~J. Olver and L.~C. Maximon.
\newblock {\em Bessel functions.}
\newblock  {N{IST} handbook of mathematical functions}, 215-286,
  {U.S. Dept. Commerce, Washington, DC}, 2010. Available online in \href{http://dlmf.nist.gov/10}{{http://dlmf.nist.gov/10}}.

\bibitem[PBM88]{PBM2}
A.~P. Prudnikov, Yu.~A. Brychkov, and O.~I. Marichev.
\newblock {\em Integrals and series. {V}ol. 2}.
\newblock {Special functions, Translated from the Russian by N. M. Queen}. Second edition.
\newblock {Gordon \& Breach Science Publishers}, New York, 1988.


\bibitem[Rie38]{R38}
Marcel Riesz.
\newblock {\em Int{\'e}grales de {R}iemann-{L}iouville et potentiels}.
\newblock { Acta Szeged} \textbf{9} (1938), 1--42.

  \bibitem[RSV19]{RSV19}
Luz Roncal, Diana Stan, and Luis Vega.
\newblock {\em Carleman type inequalities for fractional relativistic operators.}
\newblock To appear in {Rev. Mat. Complut.} (2019).

\bibitem[RS14]{RS_trans}
Luz Roncal and Pablo~Ra{\'{u}}l Stinga.
\newblock {\em Transference of fractional {L}aplacian regularity.}
\newblock  { Special functions, partial differential equations, and
  harmonic analysis}, 203--212, { Springer Proc. Math. Stat.}, \textbf{108}, Springer, Cham, 2014.
  
  \bibitem[Ron06]{R06}
Luca Rondi.
\newblock {\em A remark on a paper by {G}. {A}lessandrini and {S}. {V}essella}.
\newblock {Adv. in Appl. Math.} \textbf{36} (2006), no. 1, 67--69.
  
\bibitem[R{\"u}l15]{Rue15}
Angkana R{\"u}land.
\newblock {\em Unique continuation for fractional {S}chr{\"o}dinger equations with
  rough potentials.}
\newblock {Comm. Partial Differential Equations} \textbf{40} (2015), no. 1, 77--114.

\bibitem[R{\"u}l18]{Rue18}
Angkana R{\"u}land.
\newblock {\em Unique continuation, {R}unge approximation and the fractional
  {C}alder{\'o}n problem.}
\newblock {Journ{\'e}es {\'e}quations aux d{\'e}riv{\'e}es partielles} (2018), expos\'e no. 8, 10pp.

\bibitem[R{\"u}l19]{Rue17}
Angkana R{\"u}land.
\newblock {\em Quantitative invertibility and approximation for the truncated
  {H}ilbert and {R}iesz transforms.}
\newblock {Rev. Mat. Iberoam.} \textbf{35} (2019), no. 7, 1997--2024.

\bibitem[R{\"u}l21]{Rue21}
Angkana R{\"u}land.
\newblock {\em On single measurement stability for the fractional {C}alder{\'o}n
  problem.}
\newblock {SIAM J. Math. Anal.} \textbf{53} (2021), no. 5, 5094--5113.


\bibitem[RS18]{RS18}
Angkana R{\"u}land and Mikko Salo.
\newblock {\em Exponential instability in the fractional {C}alder{\'o}n problem.}
\newblock {\em Inverse Problems} \textbf{34} (2018), no. 4, 045003, 21 pp.

\bibitem[RS20a]{RS20}
Angkana R{\"u}land and Mikko Salo.
\newblock {\em The fractional {C}alder{\'o}n problem: low regularity and stability.}
\newblock { Nonlinear Anal.} \textbf{193} (2020), 111529, 56 pp.

\bibitem[RS20b]{RS20a}
Angkana R{\"u}land and Mikko Salo.
\newblock {\em Quantitative approximation properties for the fractional heat
  equation.}
\newblock {\em Math. Control Relat. Fields} \textbf{10} (2020), no. 1, 1--26.

\bibitem[RS19b]{RSi19}
Angkana R{\"u}land and Eva Sincich.
\newblock {\em Lipschitz stability for the finite dimensional fractional
  {C}alder{\'o}n problem with finite {C}auchy data.}
\newblock {Inverse Prob. Imaging} \textbf{13} (2019), no. 5, 1023, 2019.

\bibitem[Sal17]{S17}
Mikko Salo.
\newblock {\em The fractional {C}alder{\'o}n problem.}
\newblock {Journ{\'e}es {\'e}quations aux d{\'e}riv{\'e}es partielles} (2017), 
  expos\'e no. 8, 8pp.
  
  \bibitem[Seo15]{S15}
Ihyeok Seo.
\newblock {\em Unique continuation for fractional {S}chr\"odinger operators in three and higher dimensions.}
\newblock {Proc. Amer. Math. Soc.} \textbf{143} (2015), no. 4, 1661--1664.

\bibitem[Sil07]{Sil07}
Luis Silvestre.
\newblock {\em Regularity of the obstacle problem for a fractional power of the Laplace operator.}
\newblock {Comm. Pure Appl. Math.} \textbf{60} (2007), no. 1, 67--112.

\bibitem[Sin07]{S07}
Eva Sincich.
\newblock {\em Lipschitz stability for the inverse {R}obin problem.}
\newblock {Inverse problems} \textbf{23} (2007), no. 3, 1311.

\bibitem[SW]{SW}
Elias M. Stein and Guido Weiss.
\newblock {\em Introduction to Fourier Analysis on Euclidean Spaces}.
\newblock Princeton Mathematica Series \textbf{32}.
\newblock {Princeton Univ. Press, Princeton, New Jersey}, 1971.


\bibitem[ST10]{ST10}
Pablo~Ra{\'u}l Stinga and Jos{\'e}~Luis Torrea.
\newblock {\em Extension problem and {H}arnack's inequality for some fractional
  operators.}
\newblock {Comm. Partial Differential Equations} \textbf{35} (2010), no. 11, 2092--2122.

\bibitem[SVW02]{SVW02}
Alexander Strohmaier, Rainer Verch, and Manfred Wollenberg.
\newblock {\em Microlocal analysis of quantum fields on curved space--times: {A}nalytic wave front sets and {R}eeh--{S}chlieder theorems.}
\newblock {J. of Math. Phys.} \textbf{43} (2002), no. 11, 5514--5530.

\bibitem[Ver93]{V93}
Rainer Verch.
\newblock {\em Antilocality and a {R}eeh-{S}chlieder theorem on manifolds.}
\newblock {Lett. Math. Phys.} \textbf{28} (1993), no. 2, 143--154.

\bibitem[Yu17]{Yu17}
Hui Yu.
\newblock {\em Unique continuation for fractional orders of elliptic equations.}
\newblock {Ann. PDE} \textbf{3} (2017), no. 2, paper no. 16, 21pp.

\end{thebibliography}
\end{document}